\newtheorem{theorem}{Theorem}[section]
\newtheorem{lemma}[theorem]{Lemma}
\newtheorem{proposition}[theorem]{Proposition}
\newtheorem{corollary}[theorem]{Corollary}
\theoremstyle{remark}
\newtheorem{remark}{Remark}[section]
\newtheorem{definition}{Definition}[section]
\newtheorem{example}{Example}[section]
\newtheorem{relation}{Relation}
\numberwithin{equation}{section}
\newcommand{\algt}{\mathfrak{t}}
\begin{document}

\title[Equivariant cohomology of even-dimensional complex quadrics]{Equivariant cohomology of even-dimensional complex quadrics from a combinatorial point of view}
%\date{today}

\author[S.\ Kuroki]{Shintar\^o Kuroki}
\address{Department of Applied Mathematics Faculty of Science, Okayama University of Science, 1-1 Ridai-Cho Kita-Ku Okayama-shi Okayama 700-0005, Okayama, Japan}
\email{kuroki@ous.ac.jp}
%\thanks{The author was supported by JSPS KAKENHI Grant Number 21K03262.}

\dedicatory{To the memory of Professor Fuichi Uchida.}

\subjclass[2020]{Principal: 57S12, Secondly: 55N91, 13F20}

\begin{abstract}
The purpose of this paper is to determine the ring structure of the graph equivariant cohomology of the GKM graph induced from the complex quadrics $Q_{2n}$.
We show that the graph equivariant cohomology is generated by two types of subgraphs in the GKM graph, namely $M_{v}$ and $\Delta_{K}$, 
which are subject to four different types of relations.
By utilizing this ring structure, we establish the multiplicative relation for the generators $\Delta_{K}$ of degree $2n$ and provide an alternative computation of the ordinary cohomology ring of $Q_{2n}$, as previously computed by H.~Lai.
Additionally, we provide a combinatorial explanation for why the square of the half-degree generator $x\in H^{2n}(Q_{2n})$ vanishes when $n$ is odd and is non-vanishing when $n$ is even.
\end{abstract}

\maketitle

%%%%%%%%%%%%%%%%%%%%%%%%%%%%%%%%%%%%%%%%%%%%%%%%%%%%%%%%%%% Section 1
%%%%%%%%%%%%%%%%%%%%%%%%%%%%%%%%%%%%%%%%%%%%%%%%%%%%%%%%%%
\section{Introduction}
\label{sect:1}

In the paper \cite{GKM}, Goresky, Kottwiz, and MacPherson 
established a framework for studying the class of manifolds with a torus action, known as {\it equivariantly formal}, 
by using their fixed points and one-dimensional orbits. 
These manifolds are now commonly referred to as {\it GKM manifold}.
Expanding on their work, Guillemin and Zara introduced the notion of an abstract {\it GKM graph} in \cite{GZ01} as a combinatorial counterpart of GKM manifolds, thus initiating the study of spaces with torus actions using the combinatorial structure of GKM graphs.
Since then, 
the research of GKM manifolds and GKM graphs, commonly known as GKM theory, has been the subject of extensive research (e.g., \cite{GHZ, GKZ, GSZ,  Ku16, Ku19, MMP}).

One can view GKM theory as a methodology for computing equivariant cohomology based on the combinatorial structure of a graph.
For an equivariantly formal GKM manifold, 
its equivariant cohomology is isomorphic to the {\it graph equivariant cohomology} of its corresponding GKM graph, see \eqref{graph-equivariant-cohomology}. 
On the other hand, for abstract GKM graphs, the graph equivariant cohomology can be defined independently of geometry, leading to its study in various articles 
(e.g., \cite{DKS, FY, FIM, GHZ, GSZ, KU, KS, MMP}).
In particular, in \cite{MMP}, Maeda-Masuda-Panov introduced the combinatorial counterpart of a torus manifold, where a {\it torus manifold} is defined by a $2n$-dimensional $T^{n}$-manifold with fixed points.
This combinatorial object is called a {\it torus graph}, and 
its properties have been extensively studied. 
Notably, they established that the graph equivariant cohomology of a torus graph is isomorphic to its {\it face ring}, which is defined using the simplicial poset induced from the subgraphs of a torus graph, relying solely on algebraic and combinatorial arguments.
The advantage of establishing such a result for abstract GKM graphs, without relying on geometry, is that it can be applied to 
a wider class of equivariantly formal GKM manifolds (or spaces) that share the same GKM graph.
This enables us to compute the equivariant cohomology of equivariantly formal GKM manifolds, even when well-known techniques for computing equivariant cohomology,  such as certain methods in algebraic topology or Hamiltonian torus actions, cannot be applied.
Hence, the result in \cite{MMP} can be regarded as a generalization of the computation of the equivariant cohomology ring of torus manifolds presented in \cite{MP}. 

In our paper, we focus on the study of GKM graphs corresponding to {\it even-dimensional complex quadrics}.
An even-dimensional complex quadric $Q_{2n}$ is defined by 
\begin{align*}
Q_{2n}:= \{[z_{1}:\cdots :z_{2n+2}]\in \mathbb{C}P^{2n+1}\ |\ \sum_{i=1}^{n+1}z_{i}z_{2n+3-i}=0\},
\end{align*}
having the natural $T^{n+1}$-action
\begin{align}
\label{def-noneff-action}
[z_{1}:\cdots :z_{2n+2}]\mapsto [z_{1}t_{1}: z_{2}t_{2}: \cdots z_{n+1}t_{n+1}: t_{n+1}^{-1}z_{n+2}: t_{n}^{-1}z_{n+3}: \cdots t_{1}^{-1}z_{2n+2}],
\end{align}
where $(t_{1},\ldots, t_{n+1})\in T^{n+1}$.
Since $Q_{2n}\simeq SO(2n+2)/SO(2n)\times SO(2)$, this action is equivalent to 
the restriction of the transitive $SO(2n+2)$-action to the maximal torus $T^{n+1}$-action.
As $T^{n+1}$ is also a maximal torus of $SO(2n)\times SO(2)$ (i.e., $SO(2n)\times SO(2)$ is a maximal rank subgroup of $SO(2n+2)$), it follows from \cite{GHZ} that the fixed points and one-dimensional orbits of the $T^{n+1}$-action have the structure of a graph.
Therefore, the GKM graph of $Q_{2n}$ with the $T^{n+1}$-action \eqref{def-noneff-action} can be constructed by labeling the edges with tangential representations.
Although the action \eqref{def-noneff-action} has a finite kernel $\mathbb{Z}_{2}=\{\pm 1\}\subset T^{n+1}$, 
we can obtain an effective $T^{n+1}$-action on $Q_{2n}$ by considering the quotient $T^{n+1}/\mathbb{Z}_{2}$.
In this paper, we denote by $\mathcal{GQ}_{2n}$ the GKM graph obtained from this effective $T^{n+1}$-action, see Section~\ref{sect:2.2}.

On the other hand, the ordinary cohomology ring $H^{*}(Q_{2n})$ of $Q_{2n}$ over the integer coefficient was computed by H.~Lai in \cite{La72, La74} (also see \cite[Excercise 68.3]{EKM08} for $H^{*}(Q_{m})$ as the Chow ring\footnote{Since $Q_{m}$ can also be regarded as the homogeneous space of the affine algebraic group $SO(m+2,\mathbb{C})$, it follows from \cite[Appendix C.3.4]{EH13} that its Chow ring is isomorphic to its cohomology ring, i.e., $A^{*}(Q_{m})\simeq H^{2*}(Q_{m};\mathbb{Z})$. We also note that the rational cohomology ring of $Q_{2n-1}$ is isomorphic to that of $\mathbb{C}P^{2n-1}$ (e.g. see \cite{Uc77}); however, these two cohomologies are not isomorphic over integer coefficients (e.g. see \cite{EKM08, Jo}).}).
In particular, we have the following isomorphisms.
\begin{align}
\label{ordinary-cohom}
H^{*}(Q_{m})\simeq 
\left\{
\begin{array}{lll}
\mathbb{Z}[c,x]/\langle c^{2n+1}-2cx, x^{2}-c^{2n}x \rangle &  {\rm if}\ m=4n, &  {\rm where}\ \deg c=2,\ \deg x=4n \\
\mathbb{Z}[c,x]/\langle c^{2n+2}-2cx, x^{2} \rangle &  {\rm if}\ m=4n+2, &  {\rm where}\ \deg c=2,\ \deg x=4n+2 
\end{array}
\right.
\end{align}
Using this formula, one can conclude that $H^{odd}(Q_{2n})=0$ which means that $Q_{2n}$ is an {\it equivariantly formal GKM manifold}.
Therefore, 
the equivariant cohomology $H_{T^{n+1}}^{*}(Q_{2n})$ of the effective $T^{n+1}$-action on $Q_{2n}$ can be computed by using the graph equivariant cohomology of its GKM graph, denoted by $\mathcal{GQ}_{2n}$.
The main goal of this paper is to determine the graph equivariant cohomology $H^{*}(\mathcal{GQ}_{2n})$ (see \eqref{graph-equivariant-cohomology}) by explicitly describing its generators and relations in terms of the subgraphs. 
As a consequence, we can compute the equivariant cohomology ring of the effective $T^{n+1}$-action on $Q_{2n}$ by generators and relations.
The main theorem of this paper, which is presented in Section~\ref{sect:5} precisely, 
is as follows: 
%%%%%%%%%%%%%%%%%%%%%%%%%%%%%%%%%%%%%%%%%%%%%%%%%%%%%%%%%%% Main theorem
%%%%%%%%%%%%%%%%%%%%%%%%%%%%%%%%%%%%%%%%%%%%%%%%%%%%%%%%%%
\begin{theorem}
\label{main}
There exist the following isomorphisms as a ring:
\begin{align*}
H^{*}_{T^{n+1}}(Q_{2n})\simeq H^{*}(\mathcal{GQ}_{2n})\simeq \mathbb{Z}[\mathcal{GQ}_{2n}].
\end{align*} 
\end{theorem}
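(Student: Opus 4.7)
The plan is to combine the general GKM framework with a direct combinatorial analysis of $\mathcal{Q}_{2n}$. Since Lai's computation \eqref{ordinary-cohom} implies $H^{\mathrm{odd}}(Q_{2n})=0$, the manifold $Q_{2n}$ is equivariantly formal, so the Goresky--Kottwitz--MacPherson theorem (recalled in the paragraph before \eqref{ordinary-cohom}) identifies $H^{*}_{T^{n+1}}(Q_{2n})$ with the graph equivariant cohomology of $\mathcal{Q}_{2n}$ as $H^{*}(BT^{n+1})$-algebras. The content of Theorem~\ref{main} therefore reduces to exhibiting an explicit ring presentation of that graph equivariant cohomology by the generators $M_{v}$ and $\Delta_{K}$ subject to the four promised relations, which is the combinatorial object denoted $\mathbb{Z}[\mathcal{Q}_{2n}]$.

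First, I would lay out a precise combinatorial description of $\mathcal{Q}_{2n}$: the vertex set coming from the $T^{n+1}$-fixed points of $Q_{2n}$, the edge set coming from the one-dimensional orbits, and the axial function obtained by reading off the weights of the tangential $T^{n+1}$-representation at each fixed point (using the presentation of $Q_{2n}$ as $SO(2n+2)/SO(2n)\times SO(2)$ and the action \eqref{def-noneff-action} quotiented by the ineffective kernel $\Zt$). With this in hand, I would introduce the candidate generators: for each vertex $v$, the Thom-type class $M_{v}$ supported on the star of $v$ with value the product of the axial weights, and for each appropriate subgraph $K$, the face class $\Delta_{K}$ supported on $K$ with the natural product labels on its edges. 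A direct check verifies that both families satisfy the GKM congruence along every edge, so they define elements of $H^{*}_{T^{n+1}}(\mathcal{Q}_{2n})$.

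Next, I would derive the four types of relations purely from the combinatorics of $\mathcal{Q}_{2n}$ and the definitions of $M_{v}$ and $\Delta_{K}$: vertex--vertex type relations expressing orthogonality/partition of unity among the $M_{v}$'s; face--face type relations coming from intersections of subgraphs $\Delta_{K_{1}}\cdot \Delta_{K_{2}}$; vertex--face relations describing $M_{v}\cdot \Delta_{K}$ when $v\in K$ or $v\notin K$; and the linear relations reflecting the labels of $\Delta_{K}$ in terms of the axial function. These are all local computations at vertices and edges.

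The main obstacle is completeness, namely that the resulting map from $\mathbb{Z}[\mathcal{Q}_{2n}]$ to the graph equivariant cohomology is an isomorphism. The plan is to filter the graph equivariant cohomology by support (analogous to the flag/Morse filtrations used for torus graphs in \cite{MMP} and for GKM graphs in \cite{GZ01, GSZ}): by choosing a generic linear order on the vertices, one shows that any class can be reduced modulo classes of smaller support by subtracting a suitable $\mathbb{Z}[BT^{n+1}]$-combination of $M_{v}$'s and $\Delta_{K}$'s, yielding surjectivity. Injectivity, i.e.\ the absence of further relations, will follow by a rank comparison: after localizing and using equivariant formality, the total $H^{*}(BT^{n+1})$-rank of $\mathbb{Z}[\mathcal{Q}_{2n}]$ must match $\mathrm{rk}\, H^{*}(Q_{2n})$, which is known explicitly from \eqref{ordinary-cohom}. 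The delicate point will be showing that over $\Z$ (not just $\Q$) the four listed relation families suffice, which forces a careful bookkeeping of the factor of $2$ appearing in Lai's relation $c^{2n+1}=2cx$ and reconciling it with a combinatorial relation among the $\Delta_{K}$'s of top degree---this is precisely the multiplicative relation for $\Delta_{K}$ of degree $2n$ advertised in the abstract, and it is the step where the ``even-dimensional'' feature of the quadric enters essentially.
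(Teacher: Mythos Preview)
Your overall architecture matches the paper: reduce to graph equivariant cohomology via equivariant formality (this is the paper's Lemma~\ref{key-lemma}), define the generators, verify the four relations, and then prove that the induced map $\psi:\mathbb{Z}[\mathcal{Q}_{2n}]\to H^{*}(\mathcal{Q}_{2n})$ is bijective. Your surjectivity plan (vertex-by-vertex reduction with respect to a linear order) is exactly what the paper does in Lemma~\ref{surjectivity}.

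Two points deserve correction. First, your description of $M_{v}$ as a ``Thom-type class \dots\ with value the product of the axial weights'' is not what the paper uses: $M_{v}$ is a \emph{degree~$2$} class with $M_{v}(v)=0$ and $M_{v}(j)=\alpha(jv)$ for $j\neq v,\overline{v}$ (Definition~\ref{deg-2-gen}); the genuine Thom classes are the $\Delta_{K}$. This is not merely cosmetic---the specific degree~$2$ shape of $M_{v}$ is what makes Relations~2 and~3 work and what makes the paper's injectivity argument go through over~$\mathbb{Z}$.

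Second, and more substantively, your injectivity strategy diverges from the paper and contains the gap you yourself flag. You propose a rank comparison against $H^{*}(Q_{2n})$; this only yields injectivity over~$\mathbb{Q}$, and you do not explain how to close the integral gap (invoking the degree-$2n$ multiplicative relation does not help, since in the paper that relation---Theorem~\ref{rel5}---is a \emph{consequence} of the main theorem, not an input to it). The paper instead proves injectivity directly (Lemma~\ref{injectivity}) by a localization argument: one first shows that $\mathbb{Z}[\mathcal{Q}_{2n}]/\langle G_{J}\mid v\notin J\rangle\simeq H^{*}(BT^{n+1})$ for every vertex $v$ (Lemma~\ref{localization_1st}), then uses the resulting embedding of $\mathbb{Z}[\mathcal{Q}_{2n}]$ into $\bigoplus_{v}H^{*}(BT^{n+1})$ together with the explicit normal form \eqref{linear-relation} obtained in the surjectivity proof to peel off the coefficients $g_{i}$ one vertex at a time, using only that each $\mathbb{Z}[M_{i}\mid i\in I_{v}]$ is an integral domain. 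This bypasses any appeal to Lai's computation or to rank counts and works directly over~$\mathbb{Z}$.
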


Since the complex quadric $Q_{2n}$ is equivariantly formal, the Serre spectral sequence of the fiber bundle $Q_{2n}\to ET\times_{T}Q_{2n}\to BT$ collapses at the $E_{2}$-term.
This implies that 
the ordinary cohomology $H^{*}(Q_{2n})$ can be obtained as the quotient of the equivariant cohomology $H_{T^{n+1}}^{*}(Q_{2n})$ by $H^{>0}(BT)$.
It is worth noting that the ring structure of $H^{*}(Q_{2n})$, as shown in \eqref{ordinary-cohom}, depends on whether $n$ is even or odd.
We provide a combinatorial explanation for the difference between $H^{*}(Q_{4n})$ and $H^{*}(Q_{4n+2})$ using Theorem~\ref{main} (see Lemma~\ref{final-lemma} and  Corollary~\ref{final-cor} precisely). 

The paper is organized as follows, consisting of Sections~\ref{sect:2} through \ref{sect:7}.
In Section~\ref{sect:2}, 
we compute the GKM graph $\mathcal{GQ}_{2n}$ of the effective $T^{n+1}$-action on $Q_{2n}$.
In Section~\ref{sect:3}, 
we introduce the graph equivariant cohomology $H^{*}(\mathcal{GQ}_{2n})$ 
and define the generators $M_{v}$ and $\Delta_{K}$, studying their properties.
In Section~\ref{sect:4}, we present the four relations among $M_{v}$ and $\Delta_{K}$.
The main theorem (Theorem~\ref{main-theorem2}) is proved in Section~\ref{sect:5}.
Section~\ref{sect:6} and Section~\ref{sect:7} serve as additional sections with applications of Theorem~\ref{main}.
In Section~\ref{sect:6}, we establish multiplicative relations among $\Delta_{K}$'s of degree $2n$.
In Section~\ref{sect:7}, the ordinary cohomology ring of $Q_{2n}$ is studied from a GKM theoretical perspective.

%%%%%%%%%%%%%%%%%%%%%%%%%%%%%%%%%%%%%%%%%%%%%%%%%%%%%%%%%%% Section 2
%%%%%%%%%%%%%%%%%%%%%%%%%%%%%%%%%%%%%%%%%%%%%%%%%%%%%%%%%%
\section{GKM graphs of even-dimensional complex quadrics $Q_{2n}$}
\label{sect:2}

In this section, 
we compute the GKM graph of the effective $T^{n+1}$-action on $Q_{2n}$ (see \cite{GZ01, Ku09} about the basic facts of the GKM graph). 
In this paper, we identify the cohomology ring $H^{*}(BT^{n+1})$ as the following polynomial ring generated by degree $2$ generators $x_{1},\ldots, x_{n+1}$:
\begin{align}
\label{generators_of_BT}
H^{*}(BT^{n+1})\simeq \mathbb{Z}[x_{1},\ldots, x_{n+1}].
\end{align}
It is worth noting that the generator $x_{i}$, for $i=1,\ldots, n+1$, is 
the equivariant first Chern class of the $T^{n+1}$-equivariant complex line bundle over a point, where the action on the unique fiber is defined by the $i$th coordinate projection $p_{i}:T^{n+1}\to S^{1}\in {\rm Hom}(T^{n+1},S^{1})$.
%Here, we may also regard the generator $x_{i}$, for $i=1,\ldots, n+1$, as the equivariant first Chern class of the line bundle over a point induced by the $i$th coordinate projection $p_{i}:T^{n+1}\to S^{1}\in {\rm Hom}(T^{n+1},S^{1})$.
%Namely, we often use the following identifications:
This gives the following identifications:
\begin{align*}
H^{2}(BT^{n+1})\simeq {\rm Hom}(T^{n+1},S^{1})\simeq(\algt_{\mathbb{Z}}^{n+1})^{*}\simeq \mathbb{Z}^{n+1},
\end{align*} 
where $\mathfrak{t}_{\mathbb{Z}}^{n+1}$ is the lattice of the Lie algebra of $T^{n+1}$.
In this paper, we often use this identification.

%%%%%%%%%%%%%%%%%%%%%%%%%%%%%%%%%%%%%%%%%%%%%%%%%%%%%%%%%%%%%%%%%%%%%%%%%%%
% Section 2.1
%%%%%%%%%%%%%%%%%%%%%%%%%%%%%%%%%%%%%%%%%%%%%%%%%%%%%%%%%%%%%%%%%%%%%%%%%%%
\subsection{The GKM graph of the natural $T^{n+1}$-action on $Q_{2n}$} %defined by \eqref{def-noneff-action}}
\label{sect:2.1}

Suppose that the $T^{n+1}$-action on $Q_{2n}$ is defined by \eqref{def-noneff-action}.
We first compute the GKM graph of this non-effective $T^{n+1}$-action.

By definition, the GKM graph consists of the fixed points (vertices) and the invariant $2$-spheres (edges), and the labels on edges (the {\it axial function} of the GKM graph) which are defined by the tangential representations on fixed points.
It is easy to check from the definition \eqref{def-noneff-action} that  
the fixed points of $Q_{2n}$ are
\begin{align*}
Q_{2n}^{T}=\{[e_{i}]\ |\ i=1,\ldots, 2n+2\},
\end{align*}
where $[e_{i}]=[0:\cdots :0:1:0:\cdots :0]\in \mathbb{C}P^{2n+1}$ (only the $i$th coordinate is $1$).
We first denote the $2$-spheres in $\mathbb{C}P^{2n+1}$ by the following symbol:
\begin{align}
\label{invariant-sphere}
[z_{i}:z_{j}]:=\{[0:\cdots: 0: z_{i}:0: \cdots: 0: z_{j}: 0\cdots: 0 ]\in \mathbb{C}P^{2n+1}\},
\end{align} 
i.e., the subset consists of the only $i$th and $j$th coordinates. 
If $[z_{i}:z_{j}]\subset Q_{2n}$, then it follows from the quadric equation $\sum_{i=1}^{n+1}z_{i}z_{2n+3-i}=0$ which defines $Q_{2n}$ that one of the following properties hold:
\begin{itemize}
\item 
$[z_{i}:z_{j}]\simeq \mathbb{C}P^{1}$ (diffeomorphic) if $i+j\not=2n+3$;
\item
$[z_{i}:z_{j}]=\{[1:0]\}$ or $\{[0:1]\}$ if $i+j=2n+3$.
\end{itemize}
Namely, invariant $2$-spheres of $Q_{2n}$ are $[z_{i}:z_{j}]$ such that $i+j\not=2n+3$.
Therefore, we obtain the following graph from 
the $T^{n+1}$-action on $Q_{2n}$: 
\begin{itemize}
\item the set of vertices $V_{2n}=[2n+2]:=\{1,2,\ldots, 2n+2\}$;
\item the set of edges $E_{2n}=\{ij\ |\ i,j\in [2n+2] \ \text{such that}\ i\not=j,\ i+j\not=2n+3 \}$.
\end{itemize}
We denote this graph as $\Gamma_{2n}:=(V_{2n},E_{2n})$, see Figure~\ref{2-examples}. 
%%%%%%%%%%%%%%%%%%%%%%%%%%%%%%%%%%%%%%%%%%%%%%%%%%%%%%%%%%%%%%%%%%%%%%%%%%%
% Figure 1
%%%%%%%%%%%%%%%%%%%%%%%%%%%%%%%%%%%%%%%%%%%%%%%%%%%%%%%%%%%%%%%%%%%%%%%%%%%
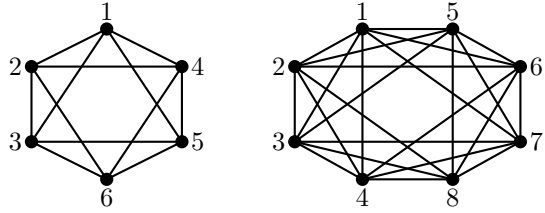
\begin{figure}[h]
\begin{tikzpicture}
\begin{scope}[xscale=0.5, yscale=0.5]

\fill(-3,2) circle (5pt);
\node[above] at (-3,2) {$1$};
\fill(-5,1) circle (5pt);
\node[left] at (-5,1) {$2$};
\fill(-5,-1) circle (5pt);
\node[left] at (-5,-1) {$3$};
\fill(-3,-2) circle (5pt);
\node[below] at (-3,-2) {$6$};
\fill(-1,-1) circle (5pt);
\node[right] at (-1,-1) {$5$};
\fill(-1,1) circle (5pt);
\node[right] at (-1,1) {$4$};

\draw[thick] (-3,2)--(-5,1);
\draw[thick] (-3,2)--(-5,-1);
\draw[thick] (-3,2)--(-1,-1);
\draw[thick] (-3,2)--(-1,1);
\draw[thick] (-5,1)--(-5,-1);
\draw[thick] (-5,1)--(-3,-2);
\draw[thick] (-5,1)--(-1,1);
\draw[thick] (-5,-1)--(-1,-1);
\draw[thick] (-5,-1)--(-3,-2);
\draw[thick] (-1,1)--(-1,-1);
\draw[thick] (-1,1)--(-3,-2);
\draw[thick] (-1,-1)--(-3,-2);

\fill(3.8,2) circle (5pt);
\node[above] at (3.8,2) {$1$};
\fill(2,1) circle (5pt);
\node[left] at (2,1) {$2$};
\fill(2,-1) circle (5pt);
\node[left] at (2,-1) {$3$};
\fill(3.8,-2) circle (5pt);
\node[below] at (3.8,-2) {$4$};
\fill(6.2,2) circle (5pt);
\node[above] at (6.2,2) {$5$};
\fill(8,-1) circle (5pt);
\node[right] at (8,-1) {$7$};
\fill(8,1) circle (5pt);
\node[right] at (8,1) {$6$};
\fill(6.2,-2) circle (5pt);
\node[below] at (6.2,-2) {$8$};

\draw[thick] (3.8,2)--(2,1);
\draw[thick] (3.8,2)--(2,-1);
\draw[thick] (3.8,2)--(3.8,-2);
\draw[thick] (3.8,2)--(6.2,2);
\draw[thick] (3.8,2)--(8,-1);
\draw[thick] (3.8,2)--(8,1);
\draw[thick] (6.2,-2)--(2,1);
\draw[thick] (6.2,-2)--(2,-1);
\draw[thick] (6.2,-2)--(3.8,-2);
\draw[thick] (6.2,-2)--(6.2,2);
\draw[thick] (6.2,-2)--(8,-1);
\draw[thick] (6.2,-2)--(8,1);
\draw[thick] (2,1)--(2,-1);
\draw[thick] (2,1)--(3.8,-2);
\draw[thick] (2,1)--(6.2,2);
\draw[thick] (2,1)--(8,1);
\draw[thick] (8,-1)--(2,-1);
\draw[thick] (8,-1)--(3.8,-2);
\draw[thick] (8,-1)--(6.2,2);
\draw[thick] (8,-1)--(8,1);
\draw[thick] (2,-1)--(3.8,-2);
\draw[thick] (2,-1)--(6.2,2);
\draw[thick] (8,1)--(3.8,-2);
\draw[thick] (8,1)--(6.2,2);
\end{scope}
\end{tikzpicture}
\caption{The left graph is $\Gamma_{4}$ ($n=2$) induced from the $T^{3}$-action on $Q_{4}$, and the right graph is $\Gamma_{6}$ ($n=3$) induced from the $T^{4}$-action on $Q_{6}$.}
\label{2-examples}
\end{figure}

%%%%%%%%%%%%%%%%%%%%%%%%%%%%%%%%%%%%%%%%%%%%%%%%%%%%%%%%%%%%%%%%%%%%%%%%%%%
% Remark 2.1
%%%%%%%%%%%%%%%%%%%%%%%%%%%%%%%%%%%%%%%%%%%%%%%%%%%%%%%%%%%%%%%%%%%%%%%%%%%
\begin{remark}
For convenience, we often denote the vertex $j\in V_{2n}$ such that $i+j=2n+3$ by  $\overline{i}$.
Namely, the set of vertices can be written by
\begin{align*}
V_{2n}=[2n+2]=\{1,2,\ldots, n+1, \overline{n+1},\overline{n},\ldots, \overline{1}\}.
\end{align*}
Moreover, by using this notation, the set of edges can be written by
\begin{align*}
E_{2n}=\{ij\ |\ i,j\in V_{2n}\ \text{such that}\ j\not=i, \overline{i} \}.
\end{align*}
\end{remark}

We next compute the tangential representations around the fixed points and put the label on edges denoting as $\widetilde{\alpha}:E_{2n}\to H^{2}(BT^{n+1})$, called an {\it axial function} on edges. 
Recall that the tangential representations around the fixed points decompose into the complex $1$-dimensional irreducible representations.
One can also regard each complex $1$-dimensional irreducible representation as the tangential representation on the fixed point of the invariant $2$-sphere.
This implies that, 
to compute the tangential representations around fixed points, 
it is enough to compute the tangential representation on each invariant $2$-sphere $[z_{i}:z_{j}]\in Q_{2n}$, see \eqref{invariant-sphere}.
By the definition of the $T^{n+1}$-action on $[z_{i}:z_{j}]$, 
we may write 
the action $t=(t_{1},\ldots, t_{n+1})\in T^{n+1}$ on $[z_{i}:z_{j}]$ as 
\begin{align*}
[z_{i}:z_{j}]\mapsto [p_{i}(t)z_{i}:p_{j}(t)z_{j}], 
\end{align*}
 giving 
the $T^{n+1}$-actions on the two fixed points of the $2$-sphere $[z_{i}:z_{j}]$ by 
\begin{align*}
[1:z_{j}]\mapsto [1:p_{i}(t)^{-1}p_{j}(t)z_{j}], \quad 
[z_{i}:1]\mapsto [p_{i}(t)p_{j}(t)^{-1}z_{i}:1],
\end{align*}
where $p_{i}:T^{n+1}\to S^{1}$ is the surjective homomorphism defined as the following map.
\begin{align*}
p_{i}(t)
=\left\{
\begin{array}{ll}
t_{i} & \text{if $i\in [n+1]$} \\
t_{\overline{i}}^{-1} & \text{if $i\in \{n+2,\ldots, 2n+2\}$}
\end{array}
\right.
\end{align*}
Therefore, the axial function $\widetilde{\alpha}:E_{2n}\to H^{2}(BT^{n+1})$ is defined by the following equation (see Figure~\ref{fig_non-effective}):
\begin{align}
\label{non-eff_axial_function}
\widetilde{\alpha}(ij)=x_{j}-x_{i},
\end{align}
where $x_{i}\in H^{2}(BT^{n+1})$ is the element such that 
\begin{itemize}
\item for $i\in [n+1]$, $x_{i}$ is the generator of $H^{2}(BT^{n+1})$ corresponds to the $i$th coordinate projection $p_{i}$, also see \eqref{generators_of_BT};
\item for $i\in \{n+2,\ldots, 2n+2\}$, $x_{i}:=-x_{\overline{i}}$.
\end{itemize}

%%%%%%%%%%%%%%%%%%%%%%%%%%%%%%%%%%%%%%%%%%%%%%%%%%%%%%%%%%%%%%%%%%%%%%%%%%%
% Figure 2
%%%%%%%%%%%%%%%%%%%%%%%%%%%%%%%%%%%%%%%%%%%%%%%%%%%%%%%%%%%%%%%%%%%%%%%%%%%
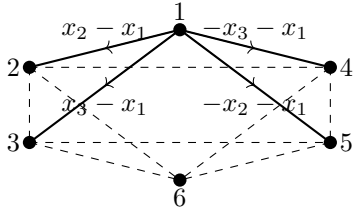
\begin{figure}[h]
\begin{tikzpicture}
\begin{scope}[xscale=0.5, yscale=0.5]

\fill(0,2) circle (5pt);
\node[above] at (0,2) {$1$};
\fill(-4,1) circle (5pt);
\node[left] at (-4,1) {$2$};
\fill(-4,-1) circle (5pt);
\node[left] at (-4,-1) {$3$};
\fill(0,-2) circle (5pt);
\node[below] at (0,-2) {$6$};
\fill(4,-1) circle (5pt);
\node[right] at (4,-1) {$5$};
\fill(4,1) circle (5pt);
\node[right] at (4,1) {$4$};

\draw[thick] (0,2)--(-4,1);
\draw[thick] (0,2)--(-4,-1);
\draw[thick] (0,2)--(4,-1);
\draw[thick] (0,2)--(4,1);
\draw[dashed] (-4,1)--(-4,-1);
\draw[dashed] (-4,1)--(0,-2);
\draw[dashed] (-4,1)--(4,1);
\draw[dashed] (-4,-1)--(4,-1);
\draw[dashed] (-4,-1)--(0,-2);
\draw[dashed] (4,1)--(4,-1);
\draw[dashed] (4,1)--(0,-2);
\draw[dashed] (4,-1)--(0,-2);

\node[above] at (-2,1.5) {$x_{2}-x_{1}$};
\draw[->] (0,2)--(-2,1.5);
\node[above] at (2,1.5) {$-x_{3}-x_{1}$};
\draw[->] (0,2)--(2,1.5);
\node[below] at (-2,0.5) {$x_{3}-x_{1}$};
\draw[->] (0,2)--(-2,0.5);
\node[below] at (2,0.5) {$-x_{2}-x_{1}$};
\draw[->] (0,2)--(2,0.5); 
\end{scope}
\end{tikzpicture}
\caption{The axial function $\widetilde{\alpha}$ around the vertex $1$ in $\Gamma_{4}$. This corresponds to the GKM graph induced from the $T^{3}$-action on $Q_{4}$ defined by \eqref{def-noneff-action}. Note that $\overline{6}=1$, $\overline{5}=2$, $\overline{4}=3$.}
\label{fig_non-effective}
\end{figure}

%%%%%%%%%%%%%%%%%%%%%%%%%%%%%%%%%%%%%%%%%%%%%%%%%%%%%%%%%%%%%%%%%%%%%%%%%%%
% Section 2.2
%%%%%%%%%%%%%%%%%%%%%%%%%%%%%%%%%%%%%%%%%%%%%%%%%%%%%%%%%%%%%%%%%%%%%%%%%%%
\subsection{The GKM graph of the effective $T^{n+1}$-action on $Q_{2n}$}
\label{sect:2.2}

Since the $T^{n+1}$-action \eqref{def-noneff-action} on $Q_{2n}$ is not effective,
the axial function $\widetilde{\alpha}$ defined by  
\eqref{non-eff_axial_function} 
does not satisfy the {\it effectiveness conditions}, i.e., 
for any fixed $i\in V_{2n}$, the set $\{\widetilde{\alpha}(ij)\ |\ ij\in E_{2n}\}$ does not span $H^{2}(BT^{n+1})\simeq \mathbb{Z}^{n+1}$ 
 (see \cite[Section 2.1]{Ku19}).
For example, around the vertex $1\in V_{2n}$, the axial functions are
\begin{align}
\label{non-primitive-vectors}
x_{2}-x_{1},\ \ldots,\ x_{n+1}-x_{1},\ -x_{n+1}-x_{1},\ -x_{n}-x_{1},\ \ldots,\ -x_{2}-x_{1}\in (\mathfrak{t}_{\mathbb{Z}}^{n+1})^{*},
\end{align}
and it is easy to check that 
these vectors span the lattice $\langle x_{2}-x_{1},\ldots, x_{n+1}-x_{1}, -x_{n+1}-x_{1}\rangle_{\mathbb{Z}}$ which is the proper subspace in $(\mathfrak{t}_{\mathbb{Z}}^{n+1})^{*}$. 
This is also similar to the axial functions on the other vertices. 
%e.g., $x_{2}-x_{1},\ldots, x_{n+1}-x_{1}$ and  $-x_{n+1}-x_{1}$, i.e., the effectiveness condition does not hold.
%Therefore, we can not use the GKM theory in the usual way.
To apply the GKM theory, we will identify 
$\langle x_{2}-x_{1},\ldots, x_{n+1}-x_{1}, -x_{n+1}-x_{1}\rangle_{\mathbb{Z}}$ as 
$(\algt^{n+1}_{\mathbb{Z}})^{*}$.
%will replace axial functions \eqref{non-eff_axial_function} with some basis of $(\algt^{n+1}_{\mathbb{Z}})^{*}$ which preserves the lattices in $(\mathfrak{t}^{n+1})^{*}$ spanned by \eqref{non-primitive-vectors}.
In this paper, they are replaced as follows:
%{\small \textcolor{red}{Mute (April 12th 2023)}: For example, when $n=2$, the tangential representation on $T_{1}Q_{4}$ is induced from $T^{3}\ni (t_{1},t_{2}, t_{3})\mapsto (t_{1}^{-1}t_{2}, t_{1}^{-1}t_{3},  t_{1}^{-1}t_{3}^{-1},t_{1}^{-1}t_{2}^{-1})\in T^{4}$  and its kernel is $\{\pm 1\}$. In this case, the effective $T^{3}/\{\pm 1\}$-action is $(z_{1},z_{2},z_{3},z_{4})\mapsto (t_{1}^{-1}t_{3}z_{1}, t_{1}^{-1}t_{2}z_{2},t_{1}^{-1}t_{3}^{-1}z_{3},t_{1}^{-1}t_{2}^{-1}z_{4})$, where $[t_{1},t_{2},t_{3}]\in T^{3}/\{\pm 1\}$. Because the identification $T^{3}/\{\pm 1\}\to T^{4}$ is defined by $[t_{1},t_{2},t_{3}]\mapsto (t_{1}t_{3}^{-1}, t_{1}t_{2}^{-1},t_{1}^{-1}t_{3}^{-1}, t_{1}^{-1}t_{2}^{-1})\in T^{4}$. Therefore, we may regard the first three coordinates $(t_{1}t_{3}^{-1}, t_{1}t_{2}^{-1}, t_{1}^{-1}t_{3}^{-1})\in T^{3}$ as the standard basis of $T^{3}$, i.e., in the Lie algebra level, we may regard $\alpha_{1}-\alpha_{3}, \alpha_{1}-\alpha_{2}$ and $-\alpha_{1}-\alpha_{3}$ as the standard basis $e_{1},e_{2},e_{3}$ in $(\algt^{3})^{*}$. Note that the final coordinate $t_{1}^{-1}t_{2}^{-1}$ is $e_{2}-e_{1}+e_{3}$.}
\begin{itemize}
\item $x_{i}-x_{1}$ as $x_{i-1}$ for $i=2,\ldots, n+1$;
\item $-x_{n+1}-x_{1}$ as $x_{n+1}$.
\end{itemize}
For the other vectors in \eqref{non-primitive-vectors},  we have the following equalities:
\begin{align*}
-x_{i}-x_{1}=-(x_{i}-x_{1})+(x_{n+1}-x_{1})+(-x_{n+1}-x_{1})
\end{align*}
for $i=2,\ldots, n$.
Therefore,  
we may replace the 
vectors in \eqref{non-primitive-vectors} with the following vectors (respectively):
\begin{align}
\label{basis_vectors}
x_{1},\ \ldots,\ x_{n},\ x_{n+1},\ -x_{n-1}+x_{n}+x_{n+1},\ \ldots,\ -x_{1}+x_{n}+x_{n+1}.
\end{align}
Notice that the vectors in \eqref{basis_vectors} are primitive generaters of $(\mathfrak{t}_{\mathbb{Z}}^{n+1})^{*}$.
This gives the axial function induced by the effective $T^{n+1}(\simeq T^{n+1}/\mathbb{Z}_{2})$-action on $Q_{2n}$, where $\mathbb{Z}_{2}=\{\pm 1\}$ is the kernel of the $T^{n+1}$-action in \eqref{def-noneff-action} (more precisely, see the following Remark~\ref{pre-ex}).

%%%%%%%%%%%%%%%%%%%%%%%%%%%%%%%%%%%%%%%%%%%%%%%%%%%%%%%%%%%%%%%%%%%%%%%%%%%
% Remark 2.2
%%%%%%%%%%%%%%%%%%%%%%%%%%%%%%%%%%%%%%%%%%%%%%%%%%%%%%%%%%%%%%%%%%%%%%%%%%%
\begin{remark}
\label{pre-ex}
Here, we will explain that the axial function \eqref{basis_vectors} is obtained by the explicit $T^{n+1}$-action on $Q_{2n}\subset \mathbb{C}P^{2n+1}$ defined by \eqref{ef-ac}. 
The vectors \eqref{non-primitive-vectors} in $(\mathfrak{t}^{n+1}_{\mathbb{Z}})^{*}$ induce the non-injective homomorphism $\varphi:T^{n+1}\to T^{2n+2}$ with $\ker \varphi=\{\pm 1\}=\mathbb{Z}_{2}$ defined by 
\begin{align*}
(t_{1},\ldots, t_{n+1})\mapsto (1,t_{2}t_{1}^{-1},\ldots, t_{n+1}t_{1}^{-1}, t_{n+1}^{-1}t_{1}^{-1}, t_{n}^{-1}t_{1}^{-1},\ldots, t_{2}^{-1}t_{1}^{-1}, t_{1}^{-2}).
\end{align*}
Note that the image of $\varphi$ is the following subtorus in $T^{2n+2}$:
\begin{align*}
{\rm im}\ \varphi&=\{(1, t_{2}t_{1}^{-1},\ldots, t_{n+1}t_{1}^{-1}, t_{n+1}^{-1}t_{1}^{-1},\ t_{n}^{-1}t_{1}^{-1},\ldots, t_{2}^{-1}t_{1}^{-1},t_{1}^{-2})\ |\ (t_{1},\ldots, t_{n+1})\in T^{n+1}\} \\
&=\{(1, s_{1},\ldots, s_{n}, s_{n+1}, s_{n-1}^{-1}s_{n}s_{n+1},\ \ldots,\ s_{1}^{-1}s_{n}s_{n+1}, s_{n}s_{n+1})\ |\ (s_{1},\ldots, s_{n+1})\in T^{n+1}\}.
\end{align*}
By the fundamental theorem on homomorphisms, we have the following identifications:
\begin{align*}
{\rm im}\ \varphi\simeq T^{n+1}/\ker \varphi=T^{n+1}/\mathbb{Z}_{2}\simeq T^{n+1}.
\end{align*}
So the effective $T^{n+1}$-action is defined by the standard action of the subtorus ${\rm im} \ \varphi\subset T^{2n+2}$, i.e., for $[z_{1}:z_{2}:\cdots :z_{n+1}:z_{n+2}: z_{n+3}:\cdots :z_{2n+2}]\in Q_{2n}$ and $(s_{1},\ldots ,s_{n+1})\in T^{n+1}$, 
\begin{align}
\label{ef-ac}
&[z_{1}:z_{2}:\cdots :z_{n+1}:z_{n+2}: z_{n+3}:\cdots :z_{2n+2}] \\
&\mapsto 
[z_{1}:s_{1}z_{2}:\cdots :s_{n}z_{n+1}:s_{n+1}z_{n+2}: s_{n-1}^{-1}s_{n}s_{n+1}z_{n+3}:\cdots : s_{1}^{-1}s_{n}s_{n+1}z_{2n+1}: s_{n}s_{n+1}z_{2n+2}]. \nonumber
\end{align}
This action is nothing but the restricted $T^{n+1}$-action on $Q_{2n}$ from the standard $T^{2n+1}$-action on $\mathbb{C}P^{2n+2}$, where $T^{2n+1}=\{(1,t_{1},\ldots, t_{2n+1})\ |\ t_{1},\ldots, t_{2n+1}\in T^{1}\}\subset T^{2n+2}$.
Moreover, its axial function around $1\in V_{2n}=Q_{2n}^{T}$ coincides with \eqref{basis_vectors}.
Therefore, \eqref{basis_vectors} gives the effective $T^{n+1}$-action on $Q_{2n}$ by \eqref{ef-ac}.
\end{remark}

%\textcolor{red}{{\small \textcolor{red}{Mute (April 12th 2023)}: For example, when $n=2$, the tangential representation on $T_{1}Q_{4}$ is induced from $T^{3}\ni (t_{1},t_{2}, t_{3})\mapsto (t_{1}^{-1}t_{2}, t_{1}^{-1}t_{3},  t_{1}^{-1}t_{3}^{-1},t_{1}^{-1}t_{2}^{-1})\in T^{4}$  and its kernel is $\{\pm 1\}$. In this case, the effective $T^{3}/\{\pm 1\}$-action is $(z_{1},z_{2},z_{3},z_{4})\mapsto (t_{1}^{-1}t_{3}z_{1}, t_{1}^{-1}t_{2}z_{2},t_{1}^{-1}t_{3}^{-1}z_{3},t_{1}^{-1}t_{2}^{-1}z_{4})$, where $[t_{1},t_{2},t_{3}]\in T^{3}/\{\pm 1\}$. Because the identification $T^{3}/\{\pm 1\}\to T^{4}$ is defined by $[t_{1},t_{2},t_{3}]\mapsto (t_{1}t_{3}^{-1}, t_{1}t_{2}^{-1},t_{1}^{-1}t_{3}^{-1}, t_{1}^{-1}t_{2}^{-1})\in T^{4}$.
%Because this is a $3$-dimensional subtorus in $T^{4}$, we may regard the first three coordinates $(t_{1}t_{3}^{-1}, t_{1}t_{2}^{-1}, t_{1}^{-1}t_{3}^{-1})\in T^{3}$ as the standard basis of $T^{3}$, i.e., $(x_{1},x_{2},x_{3},x_{1}^{-1}x_{2}x_{3})$. In the Lie algebra level, we may regard $\alpha_{1}-\alpha_{3}, \alpha_{1}-\alpha_{2}$ and $-\alpha_{1}-\alpha_{3}$ as the standard basis $e_{1},e_{2},e_{3}$ in $(\algt^{3})^{*}$. Note that the final coordinate $t_{1}^{-1}t_{2}^{-1}$ is $e_{2}-e_{1}+e_{3}$.}}

Applying a similar way to the other axial functions around each vertex (see \eqref{non-eff_axial_function}), 
we can define the axial function of the effective $T^{n+1}$-action %on $Q_{2n}$ 
as follows (see Figure~\ref{fig_0-cochain_presentation}).
%%%%%%%%%%%%%%%%%%%%%%%%%%%%%%%%%%%%%%%%%%%%%%%%%%%%%%%%%%%%%%%%%%%%%%%%%%%
% Definition 2.2
%%%%%%%%%%%%%%%%%%%%%%%%%%%%%%%%%%%%%%%%%%%%%%%%%%%%%%%%%%%%%%%%%%%%%%%%%%%
\begin{definition}
\label{effective_axial_function}
Set $f:V_{2n}\to H^{2}(BT^{n+1})$ as 
\begin{align*}
f(j)=\left\{
\begin{array}{ll}
x_{j-1}-x_{n+1} & j=1,\ldots, n+2 \\
x_{n}-x_{2n+2-j} & j=n+3,\ldots, 2n+2
\end{array}
\right.
\end{align*}
where $x_{0}=0$ and $\langle x_{1},\ldots, x_{n+1}\rangle=H^{2}(BT^{n+1})$. %\footnote{More precisely, $H^{2}(BT^{n+1})$ in Definition~\ref{effective_axial_function} is $H^{2}(B(T^{n+1}/\mathbb{Z}_{2}))$ by identifying them as $T^{n+1}/\mathbb{Z}_{2}\simeq T^{n+1}$.}. 
Then we define the axial function $\alpha:E_{2n}\to H^{2}(BT^{n+1})$ as  
\begin{align*}
\alpha(ij):=f(j)-f(i)
\end{align*}
for $j\not=i, \overline{i}$.
\end{definition}

In this paper, 
the symbol $\mathcal{GQ}_{2n}$ represents 
the GKM graph $(\Gamma_{2n},\alpha)$ (or equivalently $(\Gamma_{2n},f)$, called a {\it $0$-cochain presentation}) for $\Gamma_{2n}=(V_{2n},E_{2n})$ defined in Definition~\ref{effective_axial_function}.

%%%%%%%%%%%%%%%%%%%%%%%%%%%%%%%%%%%%%%%%%%%%%%%%%%%%%%%%%%%%%%%%%%%%%%%%%%%
% Figure 3
%%%%%%%%%%%%%%%%%%%%%%%%%%%%%%%%%%%%%%%%%%%%%%%%%%%%%%%%%%%%%%%%%%%%%%%%%%%
\begin{figure}[h]
\begin{tikzpicture}
\begin{scope}[xscale=0.4, yscale=0.4]

\fill(0,4) circle (5pt);
\node[above] at (0,4) {$-x_{3}$};
\fill(-4,2) circle (5pt);
\node[above] at (-4,2) {$x_{1}-x_{3}$};
\fill(-4,-2) circle (5pt);
\node[below] at (-4,-2) {$x_{2}-x_{3}$};
\fill(0,-4) circle (5pt);
\node[below] at (0,-4) {$x_{2}$};
\fill(4,-2) circle (5pt);
\node[below] at (4,-2) {$x_{2}-x_{1}$};
\fill(4,2) circle (5pt);
\node[above] at (4,2) {$0$};

\draw[thick] (0,4)--(-4,2);
\draw[thick] (0,4)--(-4,-2);
\draw[thick] (0,4)--(4,-2);
\draw[thick] (0,4)--(4,2);
\draw[thick] (-4,2)--(-4,-2);
\draw[thick] (-4,2)--(0,-4);
\draw[thick] (-4,2)--(4,2);
\draw[thick] (-4,-2)--(4,-2);
\draw[thick] (-4,-2)--(0,-4);
\draw[thick] (4,2)--(4,-2);
\draw[thick] (4,2)--(0,-4);
\draw[thick] (4,-2)--(0,-4);

%\node at (6,0) {$\stackrel{\delta^{1}}{\longrightarrow}$};

\fill(12,4) circle (5pt);
\node[above] at (12,4) {$1$};
\fill(8,2) circle (5pt);
\node[left] at (8,2) {$2$};
\fill(8,-2) circle (5pt);
\node[left] at (8,-2) {$3$};
\fill(12,-4) circle (5pt);
\node[below] at (12,-4) {$6$};
\fill(16,-2) circle (5pt);
\node[right] at (16,-2) {$5$};
\fill(16,2) circle (5pt);
\node[right] at (16,2) {$4$};

\draw[thick] (12,4)--(8,2);
\draw[thick] (12,4)--(8,-2);
\draw[thick] (12,4)--(16,-2);
\draw[thick] (12,4)--(16,2);
\draw[thick] (8,2)--(8,-2);
\draw[thick] (8,2)--(12,-4);
\draw[thick] (8,2)--(16,2);
\draw[thick] (8,-2)--(16,-2);
\draw[thick] (8,-2)--(12,-4);
\draw[thick] (16,2)--(16,-2);
\draw[thick] (16,2)--(12,-4);
\draw[thick] (16,-2)--(12,-4);

\node[above] at (10,3) {$x_{1}$};
\draw[->] (12,4)--(10,3);
\node[above] at (14,3) {$x_{3}$};
\draw[->] (12,4)--(14,3);
\node[below] at (10,1) {$x_{2}$};
\draw[->] (12,4)--(10,1);
\node[below] at (14,1) {$x_{2}-x_{1}+x_{3}$};
\draw[->] (12,4)--(14,1);
\end{scope}
\end{tikzpicture}
\caption{The GKM graph $\mathcal{GQ}_{2n}$ when $n=2$ (also see the left graph in Figure~\ref{2-examples}). The right figure shows that the axial function $\alpha:E_{4}\to H^{2}(BT^{3})$ of $\mathcal{GQ}_{4}$ around the vertex $1$. The left figure shows its $0$-cochain presentation $f:V_{4}\to H^{2}(BT^{3})$.}
\label{fig_0-cochain_presentation}
\end{figure}
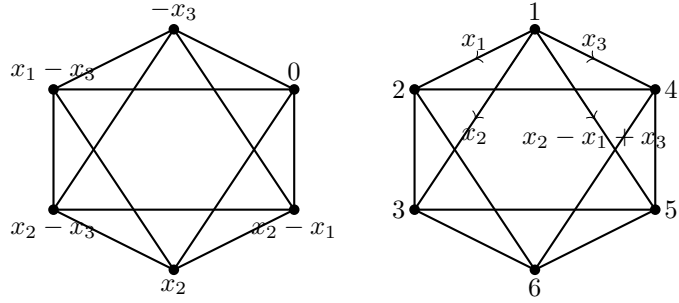

We exhibit some useful properties for the GKM graph $\mathcal{GQ}_{2n}$.
%%%%%%%%%%%%%%%%%%%%%%%%%%%%%%%%%%%%%%%%%%%%%%%%%%%%%%%%%%%%%%%%%%%%%%%%%%%
% Lemma 2.3
%%%%%%%%%%%%%%%%%%%%%%%%%%%%%%%%%%%%%%%%%%%%%%%%%%%%%%%%%%%%%%%%%%%%%%%%%%%
\begin{lemma}
\label{easy-equation1}
For every vertix $i\in V_{2n}$ in the GKM graph $\mathcal{GQ}_{2n}$, the following equation holds: 
\begin{align*}
f(i)+f(\overline{i})=x_{n}-x_{n+1}.
\end{align*}
\end{lemma}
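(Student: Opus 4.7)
The plan is a direct case analysis based on the piecewise definition of the $0$-cochain $f$. Since $\overline{i}$ is defined by $i+\overline{i}=2n+3$, the map $i \mapsto \overline{i}$ swaps the ranges $\{1,\ldots,n+1\}$ and $\{n+2,\ldots,2n+2\}$, with the fixed boundary case $\overline{n+1}=n+2$ (and $\overline{n+2}=n+1$). By the symmetry of the statement in $i$ and $\overline{i}$, it suffices to handle $i \in \{1,\ldots,n+1\}$.

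First I would dispose of the boundary case $i=n+1$: then $\overline{i}=n+2$, so $f(i)=x_{n}-x_{n+1}$ and $f(\overline{i})=x_{n+1}-x_{n+1}=0$, giving $f(i)+f(\overline{i})=x_{n}-x_{n+1}$ immediately. Next I would handle the generic case $i\in\{1,\ldots,n\}$, so that $\overline{i}=2n+3-i\in\{n+3,\ldots,2n+2\}$. Then the two pieces of the definition apply:
\begin{align*}
f(i)+f(\overline{i}) &= (x_{i-1}-x_{n+1}) + (x_{n}-x_{2n+2-\overline{i}}) \\
&= (x_{i-1}-x_{n+1}) + (x_{n}-x_{i-1}) \\
&= x_{n}-x_{n+1},
\end{align*}
using $2n+2-\overline{i}=2n+2-(2n+3-i)=i-1$.

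The remaining case $i\in\{n+2,\ldots,2n+2\}$ follows by symmetry, since $\overline{\overline{i}}=i$ and the asserted identity is invariant under the swap $i\leftrightarrow \overline{i}$. There is no real obstacle here: the only place one needs to be careful is the index bookkeeping at the boundary $i=n+1$, where the definition of $f$ switches from one case to the other, and verifying that the convention $x_{0}=0$ is compatible with the formula (which it is, as it only enters when $i=1$, giving $f(1)=-x_{n+1}$ and $f(\overline{1})=f(2n+2)=x_{n}$, summing to $x_{n}-x_{n+1}$).
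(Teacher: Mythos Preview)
Your proof is correct and takes essentially the same approach as the paper: both rely on the index identity $2n+2-\overline{i}=i-1$ (equivalently $x_{i-1}=x_{2n+2-\overline{i}}$) coming from $i+\overline{i}=2n+3$, and then read off the result from the piecewise definition of $f$. The paper compresses this into two lines without separating out the boundary case $i=n+1$, whereas you spell out the case analysis explicitly, but there is no substantive difference.
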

\begin{proof}
Since $i+\overline{i}=2n+3$, we have that $x_{i-1}=x_{2n+2-\overline{i}}$.
The equality is immediately followed by Definition~\ref{effective_axial_function}.
\end{proof}

%%%%%%%%%%%%%%%%%%%%%%%%%%%%%%%%%%%%%%%%%%%%%%%%%%%%%%%%%%%%%%%%%%%%%%%%%%%
% Lemma 2.4
%%%%%%%%%%%%%%%%%%%%%%%%%%%%%%%%%%%%%%%%%%%%%%%%%%%%%%%%%%%%%%%%%%%%%%%%%%%
\begin{lemma}
\label{easy-equation2}
For every edge $ij\in E_{2n}$ in the GKM graph $\mathcal{GQ}_{2n}$, the following equation holds: 
\begin{align*}
\alpha(ij)=-\alpha(\overline{i}\overline{j}).
\end{align*}
\end{lemma}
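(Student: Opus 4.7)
The plan is to derive this equation as a direct consequence of Lemma~\ref{easy-equation1}, using the definition $\alpha(ij) = f(j) - f(i)$ from Definition~\ref{effective_axial_function}. The desired equation $\alpha(ij) = -\alpha(\overline{i}\overline{j})$ unpacks to $f(j) - f(i) = f(\overline{i}) - f(\overline{j})$, which in turn is equivalent to
\[
f(i) + f(\overline{i}) = f(j) + f(\overline{j}).
\]
But Lemma~\ref{easy-equation1} tells us that both sides are equal to $x_{n} - x_{n+1}$, so this identity is automatic.

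Before writing this, I would spend one line verifying that $\overline{i}\overline{j}$ is actually an edge of $\Gamma_{2n}$, so that $\alpha(\overline{i}\overline{j})$ makes sense. Since $\overline{k} = 2n+3-k$, the assumption $ij \in E_{2n}$ gives $i \neq j$ and $i + j \neq 2n + 3$, which immediately yields $\overline{i} \neq \overline{j}$ and $\overline{i} + \overline{j} = 2(2n+3) - (i+j) \neq 2n + 3$; hence $\overline{i}\overline{j} \in E_{2n}$.

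There is no real obstacle here: the lemma is essentially a restatement of Lemma~\ref{easy-equation1} in terms of the axial function rather than the $0$-cochain presentation. The only minor point is to present the computation cleanly, for instance by writing
\[
\alpha(ij) = f(j) - f(i) = \bigl(x_{n} - x_{n+1} - f(\overline{j})\bigr) - \bigl(x_{n} - x_{n+1} - f(\overline{i})\bigr) = f(\overline{i}) - f(\overline{j}) = -\alpha(\overline{i}\overline{j}),
\]
where the second equality uses Lemma~\ref{easy-equation1} applied to both $i$ and $j$. This completes the proof.
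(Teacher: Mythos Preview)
Your proof is correct and follows essentially the same approach as the paper: both use Lemma~\ref{easy-equation1} to rewrite $f(j)$ and $f(i)$ in terms of $f(\overline{j})$ and $f(\overline{i})$, arriving at the identical chain of equalities. Your additional check that $\overline{i}\overline{j}\in E_{2n}$ is a welcome detail the paper omits.
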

\begin{proof}
By Lemma~\ref{easy-equation1}, we have 
\begin{align*}
\alpha(ij)&=f(j)-f(i)=(x_{n}-x_{n+1}-f(\overline{j}))-(x_{n}-x_{n+1}-f(\overline{i})) \\
&=f(\overline{i})-f(\overline{j})=\alpha(\overline{j}\overline{i})=-\alpha(\overline{i}\overline{j}).
\end{align*}
\end{proof}

%%%%%%%%%%%%%%%%%%%%%%%%%%%%%%%%%%%%%%%%%%%%%%%%%%%%%%%%%%%%%%%%%%%%%%%%%%%
% Lemma 2.5
%%%%%%%%%%%%%%%%%%%%%%%%%%%%%%%%%%%%%%%%%%%%%%%%%%%%%%%%%%%%%%%%%%%%%%%%%%%
\begin{lemma}
\label{well-definedness-of-M}
For every $j\in V_{2n}\setminus \{i,\overline{i}\}$, the following equation holds:
\begin{align*}
\alpha(ij)+\alpha(i\overline{j})=x_{n}-x_{n+1}-2f(i). %=x_{n}+x_{n+1}. this is only for i=1
\end{align*}
\end{lemma}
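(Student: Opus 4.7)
The plan is essentially a one-line computation using the definition of $\alpha$ in terms of the $0$-cochain $f$ (Definition~\ref{effective_axial_function}) combined with the already-proved Lemma~\ref{easy-equation1}.

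First I would expand both summands via the defining equation $\alpha(ab) = f(b) - f(a)$. Since the hypothesis $j \in V_{2n}\setminus\{i,\overline{i}\}$ ensures that both $ij$ and $i\overline{j}$ are genuine edges of $\Gamma_{2n}$ (note $\overline{j} \neq i$ is equivalent to $j \neq \overline{i}$, and $\overline{j} \neq \overline{i}$ is equivalent to $j \neq i$), the axial function $\alpha$ is defined on each of them. Adding,
\[
\alpha(ij) + \alpha(i\overline{j}) = \bigl(f(j) - f(i)\bigr) + \bigl(f(\overline{j}) - f(i)\bigr) = f(j) + f(\overline{j}) - 2 f(i).
\]

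Then I would substitute $f(j) + f(\overline{j}) = x_n - x_{n+1}$ from Lemma~\ref{easy-equation1} (applied to the vertex $j$ in place of $i$) to obtain the desired identity $\alpha(ij) + \alpha(i\overline{j}) = x_n - x_{n+1} - 2 f(i)$.

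There is no real obstacle here; the only thing worth checking is the innocuous combinatorial point that $\overline{j} \neq i, \overline{i}$ under the hypothesis, so that the symbol $\alpha(i\overline{j})$ makes sense. The statement is in effect a cleaner packaging of Lemma~\ref{easy-equation1}, and its role in the paper will presumably be to guarantee the well-definedness of the class $M_v$ (as suggested by the label of the lemma), where both $\alpha(ij)$ and $\alpha(i\overline{j})$ appear symmetrically along the two edges emanating from $i$ toward the antipodal pair $\{j, \overline{j}\}$.
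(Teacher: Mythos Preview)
Your argument is correct and is essentially identical to the paper's own proof: expand $\alpha(ij)+\alpha(i\overline{j})$ via $\alpha(ab)=f(b)-f(a)$ and then invoke Lemma~\ref{easy-equation1} to replace $f(j)+f(\overline{j})$ by $x_n-x_{n+1}$. Your extra remark that $\overline{j}\notin\{i,\overline{i}\}$ under the hypothesis is a harmless clarification the paper leaves implicit.
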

\begin{proof}
%We may assume that $i\in [n+1]$ because $i+\overline{i}=2n+3$.
%Since $j+\overline{j}=2n+3$, it is enough to show the equality for the cases when $j\in [n+1]\setminus\{i\}$.
By definition of the axial function $\alpha:E_{2n}\to H^{2}(BT^{n+1})$ and Lemma~\ref{easy-equation1}, we have that 
\begin{align*}
\alpha(ij)+\alpha(i\overline{j})&=(f(j)-f(i))+(f(\overline{j})-f(i))=f(j)+f(\overline{j})-2f(i) \\
&=x_{n}-x_{n+1}-2f(i).
\end{align*} 
\end{proof}

%%%%%%%%%%%%%%%%%%%%%%%%%%%%%%%%%%%%%%%%%%%%%%%%%%%%%%%%%%%%%%%%%%%%%%%%%%%
% Lemma 2.6
%%%%%%%%%%%%%%%%%%%%%%%%%%%%%%%%%%%%%%%%%%%%%%%%%%%%%%%%%%%%%%%%%%%%%%%%%%%
\begin{lemma}
\label{three-independent}
The GKM graph $\mathcal{GQ}_{2n}$ is three-independent, i.e., for every vertex $i\in V_{2n}$ and every distinct three vertices $j_{1}, j_{2}, j_{3}\in V_{2n}\setminus \{i, \overline{i}\}$, the axial functions $\alpha(i j_{1}), \alpha(i j_{2}), \alpha(i j_{3})$ are linearly independent. 
\end{lemma}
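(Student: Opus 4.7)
My plan is to reduce three-independence to a single vertex and then perform an elementary case analysis. The first step will be to use Lemma~\ref{easy-equation2}, which gives $\alpha(\overline{i}\,\overline{j}) = -\alpha(ij)$, to conclude that three-independence at $\overline{i}$ is equivalent to that at $i$; hence it suffices to consider $i \in \{1, 2, \ldots, n+2\}$. I will then show that for each such $i$, a $\mathbb{Z}$-linear change of coordinates on $H^{2}(BT^{n+1})$ identifies the multiset of axial functions $\{\alpha(ij) : j \in V_{2n} \setminus \{i,\overline{i}\}\}$ with the multiset at vertex $1$, namely
\begin{align*}
\{x_1, x_2, \ldots, x_{n+1}\} \cup \{x_n + x_{n+1} - x_k \mid k = 1, \ldots, n-1\}.
\end{align*}
Since a linear isomorphism preserves linear independence, this reduces the full statement to verifying three-independence at vertex $1$.

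To construct the change of coordinates, I will compute $\alpha(ij) = f(j) - f(i)$ directly from Definition~\ref{effective_axial_function}, splitting the enumeration of $j$ into the two cases $j \le n+2$ and $j \ge n+3$. For $i \in \{2,\ldots,n\}$ I expect the substitution $y_k = x_k - x_{i-1}$ for $k \ne i-1$ together with $y_{i-1} = -x_{i-1}$ (using the convention $x_0 := 0$) to bring the axial functions into the standard form displayed above; its matrix over $\mathbb{Z}$ has determinant $\pm 1$, so it is genuinely a change of lattice basis. The boundary vertices $i = n+1$ and $i = n+2$ will require modified substitutions, for instance $y_k = x_k - x_n$ for $k \le n-1$, $y_n = -x_n$, $y_{n+1} = x_{n+1}$ at $i = n+1$; this modification is forced because the two piecewise formulas defining $f$ meet at these vertices, so the interior substitution no longer absorbs the shift by $f(i)$ uniformly.

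Finally, I will verify three-independence at vertex $1$ by a case analysis on how many of the three chosen vectors lie in $A = \{x_1,\ldots,x_{n+1}\}$ and how many lie in $B = \{x_n + x_{n+1} - x_k \mid k = 1,\ldots,n-1\}$. The crucial observation is that each vector in $B$ has coordinate support $\{n, n+1, k\}$ of size three, so it cannot lie in the span of two vectors from $A$; this disposes of the case of two from $A$ and one from $B$. Cases with two or three vectors from $B$ will reduce, by differencing pairs to obtain vectors of the form $x_\ell - x_k$, to a routine rank check on three sparsely supported vectors. The main obstacle will be the bookkeeping required to construct the correct $\mathbb{Z}$-linear change at the boundary vertices $n+1$ and $n+2$; however, once the substitutions are pinned down, the argument collapses uniformly to the elementary verification at vertex $1$.
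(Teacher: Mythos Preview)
Your proposal is correct. The paper's own proof is a single line, ``This is straightforward from Definition~\ref{effective_axial_function}'', with no further detail; you are supplying a genuine argument where the paper supplies none.

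Your route---using Lemma~\ref{easy-equation2} to halve the vertex set, then exhibiting for each remaining $i$ an explicit $\mathbb{Z}$-linear change of basis identifying the multiset $\{\alpha(ij)\}$ with the one at vertex~$1$, and finally a support-based case analysis at vertex~$1$---is a clean way to organize what the paper leaves implicit. The substitutions you propose do work: for $i\in\{2,\ldots,n\}$ one checks that the second group of axial functions $x_{n}+x_{n+1}-x_{m}-x_{i-1}$ (with $m\in\{0,\ldots,n-1\}\setminus\{i-1\}$) becomes exactly $\{y_{n}+y_{n+1}-y_{k}:1\le k\le n-1\}$ under your map, because the value $m=0$ contributes the missing index $k=i-1$; the boundary cases $i=n+1,\,n+2$ are symmetric in $x_{n}\leftrightarrow x_{n+1}$ and your modified substitutions handle them. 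The case analysis at vertex~$1$ is routine, as you indicate: the key point is that each vector in $B$ has coordinate support of size three, which immediately rules out dependence with two vectors from $A$, and differencing handles the remaining cases.

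What your approach buys over a bare ``straightforward'' is uniformity: rather than verifying independence separately at each of the $2n+2$ vertices, you reduce everything to a single concrete computation. One minor redundancy: since $\overline{n+1}=n+2$, after invoking Lemma~\ref{easy-equation2} you only need to treat one of $i=n+1$ or $i=n+2$, not both.
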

\begin{proof}
This is straightforward from Definition~\ref{effective_axial_function}.
\end{proof}

%%%%%%%%%%%%%%%%%%%%%%%%%%%%%%%%%%%%%%%%%%%%%%%%%%%%%%%%%%%%%%%%%%%%%%%%%%%
% Section 3
%%%%%%%%%%%%%%%%%%%%%%%%%%%%%%%%%%%%%%%%%%%%%%%%%%%%%%%%%%%%%%%%%%%%%%%%%%%
\section{Two generators}
\label{sect:3}

The {\it graph equivariant cohomology} of the GKM graph $\mathcal{GQ}_{2n}$ is defined by 
\begin{align}
\label{graph-equivariant-cohomology}
H^{*}(\mathcal{GQ}_{2n}):=\{h:V_{2n}\to H^{*}(BT^{n+1})\ |\ h(i)-h(j)\equiv 0 \mod \alpha(ij)\ \text{for $ij\in E_{2n}$}\}.
\end{align}
The equation $h(i)-h(j)\equiv 0 \mod \alpha(ij)$ in \eqref{graph-equivariant-cohomology} is also called a {\it congruence relation}.
Note that $H^{*}(\mathcal{GQ}_{2n})$ has the graded $H^{*}(BT^{n+1})$-algebra structure induced by the graded algebra structure of $\bigoplus_{t\ge 0}R_{t}$, where $R_{t}$ is the degree $t$ part defined by $R_{t}:=H^{t}(BT^{n+1})$.
In particular, 
there is the injective homomorphism 
\begin{align}
\label{algebra-structure}
\iota:H^{*}(BT^{n+1}) \to H^{*}(\mathcal{GQ}_{2n})
\end{align}
such that the image of $x\in H^{*}(BT^{n+1})$, say $\iota(x):V_{2n}\to H^{*}(BT^{n+1})$,  is defined by the function 
\begin{align*}
\iota(x)(v)=x
\end{align*}
for all $v\in V_{2n}$.
This induces the $H^{*}(BT^{n+1})$-action on $H^{*}(\mathcal{GQ}_{2n})$.

The following lemma holds.
%%%%%%%%%%%%%%%%%%%%%%%%%%%%%%%%%%%%%%%%%%%%%%%%%%%%%%%%%%%%%%%%%%%%%%%%%%%
% Lemma 3.1
%%%%%%%%%%%%%%%%%%%%%%%%%%%%%%%%%%%%%%%%%%%%%%%%%%%%%%%%%%%%%%%%%%%%%%%%%%%
\begin{lemma}
\label{key-lemma}
For the effective $T^{n+1}$-action on $Q_{2n}$, the following graded $H^{*}(BT^{n+1})$-algebra isomorphism holds:
\begin{align*}
H_{T^{n+1}}^{*}(Q_{2n})\simeq H^{*}(\mathcal{GQ}_{2n}).
\end{align*}
\end{lemma}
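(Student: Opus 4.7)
The plan is to apply the theorem of Goresky--Kottwitz--MacPherson \cite{GKM}, which for an equivariantly formal GKM manifold $M$ identifies $H^{*}_{T}(M)$ with the subring of $\prod_{v\in M^{T}} H^{*}(BT)$ consisting of tuples that satisfy the congruence relations determined by the tangential weights along the invariant $2$-spheres. The lemma then follows by matching this description with the definition \eqref{graph-equivariant-cohomology} of $H^{*}(\mathcal{Q}_{2n})$.

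First I would verify that the effective $T^{n+1}$-action on $Q_{2n}$ satisfies the GKM hypotheses: equivariant formality follows from $H^{\mathrm{odd}}(Q_{2n})=0$, which is read off from \eqref{ordinary-cohom}; the fixed point set $\{[e_{i}]\}$ is discrete (Section~\ref{sect:2.1}); the one-skeleton is the union of the invariant $2$-spheres $[z_{i}:z_{j}]$ of \eqref{invariant-sphere}; and the three-independence of tangential weights at every fixed point is Lemma~\ref{three-independent}. All of these are essentially implicit in the construction of $\mathcal{Q}_{2n}$ carried out in Section~\ref{sect:2}.

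Next I would identify the tangential weights of the effective action with the axial function $\alpha$ of Definition~\ref{effective_axial_function}. The weights of the non-effective action were computed in \eqref{non-eff_axial_function}. Passing to the effective quotient $T^{n+1}/\mathbb{Z}_{2}$ replaces the original character lattice by the sublattice generated by these weights, and the explicit change of basis \eqref{basis_vectors} identifies this sublattice with the $\mathbb{Z}[x_{1},\ldots,x_{n+1}]$ used in Section~\ref{sect:2.2}. Under this identification, the weight at vertex $i$ along the edge $ij$ becomes exactly $\alpha(ij)=f(j)-f(i)$. The GKM congruence relations then read $h(i)-h(j)\equiv 0 \pmod{\alpha(ij)}$, which is precisely the condition defining $H^{*}(\mathcal{Q}_{2n})$ in \eqref{graph-equivariant-cohomology}. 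The graded $H^{*}(BT^{n+1})$-algebra structure supplied by \eqref{algebra-structure} matches the one on $H^{*}_{T^{n+1}}(Q_{2n})$ induced by the projection $ET\times_{T}Q_{2n}\to BT$.

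The main technical point is that the change of basis from Section~\ref{sect:2.2} was written out explicitly only at the vertex $1$; I would need to confirm that the same identification of the effective character lattice yields $\alpha(ij)=f(j)-f(i)$ at every vertex. This uniformity can be verified using the symmetry properties established in Lemmas~\ref{easy-equation1} and \ref{easy-equation2}, together with the fact that $Q_{2n}\simeq SO(2n+2)/(SO(2n)\times SO(2))$ is Weyl-group symmetric on its fixed points, so checking one vertex propagates to all others up to the antipodal pairing $i\leftrightarrow \overline{i}$.
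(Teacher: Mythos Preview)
Your strategy via the GKM theorem is natural, and the structural hypotheses you list (equivariant formality from $H^{odd}(Q_{2n})=0$, isolated fixed points, the invariant $2$-spheres of~\eqref{invariant-sphere}, pairwise linear independence of the tangent weights) are all in order. There is, however, a genuine gap: the theorem of \cite{GKM} in its original form is proved over rational or complex coefficients, whereas the lemma asserts an isomorphism of $\mathbb{Z}$-algebras. Over $\mathbb{Z}$, equivariant formality together with the GKM graph structure does not by itself guarantee that $H^{*}_{T}(M;\mathbb{Z})$ coincides with the graph equivariant cohomology; an additional hypothesis---typically connectedness of all isotropy subgroups---is required, and you do not verify it.

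The paper's proof supplies exactly this missing ingredient. It observes that the effective $T^{n+1}$-action is the quotient of~\eqref{def-noneff-action} by its finite kernel $\mathbb{Z}_{2}$, and deduces that every isotropy subgroup of the effective action is connected. Together with $H^{odd}(Q_{2n})=0$, this allows the paper to invoke the integral version of the GKM description due to Franz and Puppe~\cite{FP07} (see also \cite[Theorem~2.12]{DKS}), which yields the isomorphism over $\mathbb{Z}$. Once you insert this step, your outline is complete; the remaining checks (matching the effective weights with $\alpha$, compatibility of the $H^{*}(BT^{n+1})$-algebra structures) are already built into the construction of $\mathcal{Q}_{2n}$ in Section~\ref{sect:2}. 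Two minor remarks: the change of basis in~\eqref{basis_vectors} is a single global identification of character lattices, so Definition~\ref{effective_axial_function} automatically gives $\alpha(ij)=f(j)-f(i)$ at every vertex and no separate propagation argument is needed; and the three-independence of Lemma~\ref{three-independent} is stronger than what the GKM condition demands (pairwise independence suffices)---three-independence is used elsewhere in the paper for uniqueness arguments, not for this lemma.
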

\begin{proof}
Because the effective $T^{n+1}$-action on $Q_{2n}$ is obtained by the quotient $T^{n+1}/\mathbb{Z}_{2}$ by the finite kernel of the action \eqref{def-noneff-action}.
This implies that all isotropy subgroups of the effective $T^{n+1}$-action are connected. 
Therefore, by using $H^{odd}(Q_{2n})=0$ and \cite{FP07} (also see \cite[Theorem 2.12]{DKS}), we have the statement. 
\end{proof}
Lemma~\ref{key-lemma} means that to compute the equivariant cohomology $H_{T^{n+1}}^{*}(Q_{2n})$ is equivalent to compute the graph equivariant cohomology $H^{*}(\mathcal{GQ}_{2n})$.
The goal of this paper is to describe its generators and relations 
%of the graded ring $H^{*}(\mathcal{GQ}_{2n})$ 
by the combinatorial data of the GKM graph $\mathcal{GQ}_{2n}$; this will be proved in Theorem~\ref{main-theorem2}.
The injective homomorphism \eqref{algebra-structure} for $H^{*}(\mathcal{GQ}_{2n})$ is also given in Proposition~\ref{polynomial generators}; toghether with Theorem~\ref{main-theorem2}, this establishes the $H^{*}(BT^{n+1})$-algebra structure on $H^{*}(\mathcal{GQ}_{2n})$.
To prove it, 
in this section, we introduce two types of elements in $H^{*}(\mathcal{GQ}_{2n})$ which will be the ring generators of $H^{*}(\mathcal{GQ}_{2n})$.

%%%%%%%%%%%%%%%%%%%%%%%%%%%%%%%%%%%%%%%%%%%%%%%%%%%%%%%%%%%%%%%%%%%%%%%%%%%
% Section 3.1
%%%%%%%%%%%%%%%%%%%%%%%%%%%%%%%%%%%%%%%%%%%%%%%%%%%%%%%%%%%%%%%%%%%%%%%%%%%
\subsection{Degree $2$ generators}
\label{sect:3.1}

We first define the degree two element, denoted by $M_{v}$, in $H^{2}(\mathcal{GQ}_{2n})$ for every $v\in V_{2n}$.
%The element $M_{v}$ will be one of the generators.

%%%%%%%%%%%%%%%%%%%%%%%%%%%%%%%%%%%%%%%%%%%%%%%%%%%%%%%%%%%%%%%%%%%%%%%%%%%
% Definition 3.2
%%%%%%%%%%%%%%%%%%%%%%%%%%%%%%%%%%%%%%%%%%%%%%%%%%%%%%%%%%%%%%%%%%%%%%%%%%%
\begin{definition}[degree 2 generators]
\label{deg-2-gen}
Take a vertex $v\in V_{2n}=[2n+2]$. 
%and put $I=V_{2n}\setminus \{i\}$.
We define the function $M_{v}:V_{2n}\to H^{2}(BT^{n+1})$ by 
\begin{align*}
M_{v}(j)=\left\{
\begin{array}{ll}
0 & j=v \\
\alpha(jv)=f(v)-f(j) & j\not=v,\overline{v} \\
\alpha(\overline{v}k)+\alpha(\overline{v}\overline{k})=x_{n}-x_{n+1}-2f(\overline{v}) & j=\overline{v} 
\end{array}
\right.
\end{align*}
\end{definition}
The equality for $M_{v}(\overline{v})$ is obtained by Lemma~\ref{well-definedness-of-M}; this means that $M_{v}(\overline{v})$ does not depend on the choice of $k\in V_{2n}\setminus\{v,\overline{v}\}$.
The following proposition holds.
%%%%%%%%%%%%%%%%%%%%%%%%%%%%%%%%%%%%%%%%%%%%%%%%%%%%%%%%%%%%%%%%%%%%%%%%%%%
% Proposition 3.2
%%%%%%%%%%%%%%%%%%%%%%%%%%%%%%%%%%%%%%%%%%%%%%%%%%%%%%%%%%%%%%%%%%%%%%%%%%%
\begin{proposition}
For every $v\in V_{2n}$, the function $M_{v}:V_{2n}\to H^{2}(BT^{n+1})$ is an element of $H^{2}(\mathcal{GQ}_{2n})$, i.e., 
$M_{v}\in H^{2}(\mathcal{GQ}_{2n})$. 
\end{proposition}
\begin{proof}
We claim that $M_{v}(j)-M_{v}(k)\equiv 0 \mod \alpha(jk)$ for every $jk\in E_{2n}$ by case-by-case checking. 
\begin{description}
\item[The case when $j=v$]
For every $k\in V_{2n}\setminus \{j,\overline{j}\}= V_{2n}\setminus \{v,\overline{v}\}$, 
by Definition~\ref{deg-2-gen} we have 
\begin{align*}
M_{v}(j)-M_{v}(k)&=0-(f(v)-f(k))=f(k)-f(v)=\alpha(vk)\equiv 0 \mod \alpha(vk)=\alpha(jk).
\end{align*}
\item[The case when $j=\overline{v}$]
For every $k\in V_{2n}\setminus \{j,\overline{j}\}= V_{2n}\setminus \{v,\overline{v}\}$, 
we have that
\begin{align*}
M_{v}(j)-M_{v}(k)
&=\alpha(\overline{v}k)+\alpha(\overline{v}\overline{k})-\alpha(kv) \\
&=\alpha(\overline{v}k)+\alpha(\overline{v}\overline{k})-\alpha(\overline{v}\overline{k}) \quad ({\rm by\ Lemma~\ref{easy-equation2}\ and\ Definition~\ref{effective_axial_function}}) \\
%&=(x_{n}-x_{n+1}-2f(\overline{v}))-(f(v)-f(k)) \\
%&=(f(k)+f(\overline{k})-2f(\overline{v}))-(f(v)-f(k)) \quad ({\rm by\ Lemma~\ref{easy-equation1}}) \\
%&=2(f(k)-f(\overline{v}))+(f(\overline{k})-f(v))  \\
%&=2\alpha(\overline{v}k)+\alpha(v\overline{k})  \quad ({\rm by\ Definition~\ref{effective_axial_function}})\\
%&=2\alpha(\overline{v}k)-\alpha(\overline{v}k)  \quad ({\rm by\ Lemma~\ref{easy-equation2}}) \\
&=\alpha(\overline{v}k)\equiv 0 \mod \alpha(\overline{v}k)=\alpha(jk).
\end{align*}
\item[The case when $j\not=v, \overline{v}$]
With the method similar to that demonstrated as above for the two cases ($k\not=\overline{v}$ and $k=\overline{v}$), we can easily check that $M_{v}(j)-M_{v}(k)\equiv 0\mod \alpha(jk)$.
%Take $k\in V_{2n}\setminus \{j,\overline{j}\}$. % (equivalently $jk\in E_{2n}$).
%In this case, it is enough to check the following two cases:
%\begin{description}
%\item[When $k\not=\overline{v}$]
%By using Definition~\ref{deg-2-gen}, we have that   
%\begin{align*}
%M_{v}(j)-M_{v}(k)&=(f(v)-f(j))-(f(v)-f(k))=f(k)-f(j)=\alpha(jk)\equiv 0 \mod \alpha(jk).
%\end{align*}
%\item[When $k=\overline{v}$]
%Then, we have 
%\begin{align*}
%M_{v}(j)-M_{v}(k)&=(f(v)-f(j))-(f(j)+f(\overline{j})-2f(\overline{v})) \quad ({\rm by\ Definition~\ref{deg-2-gen}\ and\ Lemma~\ref{easy-equation1}}) \\
%&=2(f(\overline{v})-f(j))+(f(v)-f(\overline{j})) \\
%&=2\alpha(j\overline{v})+\alpha(\overline{j}v) \\
%&=2\alpha(j\overline{v})-\alpha(j\overline{v}) \quad ({\rm by\ Lemma~\ref{easy-equation2}}) \\
%&=\alpha(j\overline{v})\equiv 0 \mod \alpha(j\overline{v})=\alpha(jk).
%\end{align*}
%\end{description}
\end{description}
Therefore, 
we have that $M_{v}\in H^{2}(\mathcal{GQ}_{2n})$.
\end{proof}

%%%%%%%%%%%%%%%%%%%%%%%%%%%%%%%%%%%%%%%%%%%%%%%%%%%%%%%%%%%%%%%%%%%%%%%%%%%
% Example 3.4
%%%%%%%%%%%%%%%%%%%%%%%%%%%%%%%%%%%%%%%%%%%%%%%%%%%%%%%%%%%%%%%%%%%%%%%%%%%
\begin{example}
For $n=2$, Figure~\ref{fig_deg2} represents the class $M_{6}\in H^{2}(\mathcal{GQ}_{4})$.
%Note that $\overline{6}=1$. 
%%%%%%%%%%%%%%%%%%%%%%%%%%%%%%%%%%%%%%%%%%%%%%%%%%%%%%%%%%%%%%%%%%%%%%%%%%%
% Figure 4
%%%%%%%%%%%%%%%%%%%%%%%%%%%%%%%%%%%%%%%%%%%%%%%%%%%%%%%%%%%%%%%%%%%%%%%%%%%
\begin{figure}[H]
\begin{tikzpicture}
\begin{scope}[xscale=0.4, yscale=0.4]
\fill(0,4) circle (5pt);
\node[above] at (0,4) {$M_{6}(1)=x_{2}-x_{3}-2f(\overline{6})=x_{2}+x_{3}$};
\fill(-4,2) circle (5pt);
\node[left] at (-4,2) {$M_{6}(2)=x_{2}-x_{1}+x_{3}$};
\fill(-4,-2) circle (5pt);
\node[left] at (-4,-2) {$M_{6}(3)=x_{3}$};
\fill(0,-4) circle (5pt);
\node[below] at (0,-4) {$M_{6}(6)=M_{6}(\overline{1})=0$};
\fill(4,-2) circle (5pt);
\node[right] at (4,-2) {$M_{6}(5)=M_{6}(\overline{2})=x_{1}$};
\fill(4,2) circle (5pt);
\node[right] at (4,2) {$M_{6}(4)=M_{6}(\overline{3})=x_{2}$};

\draw[very thick] (0,4)--(-4,2);
\draw[very thick] (0,4)--(-4,-2);
\draw[very thick] (0,4)--(4,-2);
\draw[very thick] (0,4)--(4,2);
\draw[very thick] (-4,2)--(-4,-2);
\draw[dashed] (-4,2)--(0,-4);
\draw[very thick] (-4,2)--(4,2);
\draw[very thick] (-4,-2)--(4,-2);
\draw[dashed] (-4,-2)--(0,-4);
\draw[very thick] (4,2)--(4,-2);
\draw[dashed] (4,2)--(0,-4);
\draw[dashed] (4,-2)--(0,-4);

\draw[->] (-4,2)--(-3,0.5);
\draw[->] (-4,-2)--(-3,-2.5);
\draw[->] (4,2)--(3,0.5);
\draw[->] (4,-2)--(3,-2.5);
\end{scope}
\end{tikzpicture}
\caption{The element $M_{6}\in H^{2}(\mathcal{GQ}_{4})$.}
\label{fig_deg2}
\end{figure}
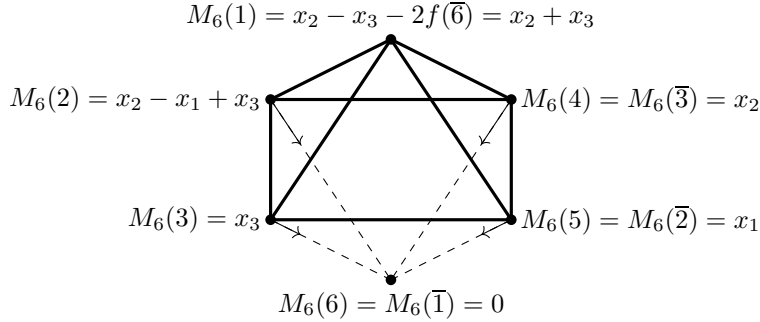
\end{example}

%%%%%%%%%%%%%%%%%%%%%%%%%%%%%%%%%%%%%%%%%%%%%%%%%%%%%%%%%%%%%%%%%%%%%%%%%%%
% Section 3.2
%%%%%%%%%%%%%%%%%%%%%%%%%%%%%%%%%%%%%%%%%%%%%%%%%%%%%%%%%%%%%%%%%%%%%%%%%%%
\subsection{Some properties for degree $2$ generators $M_{v}$}
\label{sect:3.2}
Before we define the higher degree generators, we introduce three properties for $M_{v}$'s.

For the vertices $W\subset V_{2n}$, we denote the {\it full-subgraph} with vertices $W$ by 
$\Gamma_{W}$, i.e., $\Gamma_{W}$ consists of the following data:
\begin{itemize}
\item the vertices $W$;
\item the edges $E_{W}:=\{ij\in E_{2n}\ |\ i,j\in W\}$.
\end{itemize}

Note that, by Definition~\ref{deg-2-gen}, 
the value of $M_{v}(j)\in H^{2}(BT^{n+1})$ for $j\not=\overline{v}$ coincides with the axial function $\alpha(jv)$ on the vertex $j$ of the full-subgraph $\Gamma_{I}$, where $I=V_{2n}\setminus \{v\}$. 
We also note that the edge $jv$ is the unique out-going edge of the full-subgraph $\Gamma_{I}$ from the vertex $j\in I\setminus\{\overline{v}\}$.
%Here, the edge $jv$ may be regarded as the external edge of the full-subgraph $\Gamma_{I}$. 
%In this sense, we may call $\alpha(jv)$ an {\it external axial function} of $\Gamma_{I}$ on the vertex $j\in I\setminus \{\overline{v}\}$.
The following proposition shows that 
an element in $H^{2}(\mathcal{GQ}_{2n})$ with such a property is uniquely determined.

%%%%%%%%%%%%%%%%%%%%%%%%%%%%%%%%%%%%%%%%%%%%%%%%%%%%%%%%%%%%%%%%%%%%%%%%%%%
% Proposition 3.3
%%%%%%%%%%%%%%%%%%%%%%%%%%%%%%%%%%%%%%%%%%%%%%%%%%%%%%%%%%%%%%%%%%%%%%%%%%%
\begin{proposition}
%For $I=V_{2n}\setminus \{i\}$ and $j\not=\overline{i}$, 
If an element $A\in H^{2}(\mathcal{GQ}_{2n})$ satisfies that 
$A(j)=M_{v}(j)$ for every $j\in V_{2n}\setminus\{v,\overline{v}\}$, then $A=M_{v}$.
\end{proposition}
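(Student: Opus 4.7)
My strategy is to reduce the problem to a vanishing statement by studying the difference $B := A - M_{v}$. Since $M_{v} \in H^{2}(\mathcal{Q}_{2n})$ by the preceding proposition and $A \in H^{2}(\mathcal{Q}_{2n})$ by assumption, we have $B \in H^{2}(\mathcal{Q}_{2n})$. The hypothesis $A(j) = M_{v}(j)$ for $j \in V_{2n}\setminus\{v,\overline{v}\}$ rephrases as $B(j) = 0$ on this set, so the claim $A = M_{v}$ reduces to showing $B(v) = 0$ and $B(\overline{v}) = 0$.

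To handle $B(v)$, I would use the congruence relations at edges incident to $v$. For every $j \in V_{2n}\setminus\{v,\overline{v}\}$, the edge $jv$ lies in $E_{2n}$ and the defining condition of $H^{2}(\mathcal{Q}_{2n})$ gives $B(v) - B(j) \equiv 0 \pmod{\alpha(jv)}$, which collapses to $B(v) \equiv 0 \pmod{\alpha(jv)}$ since $B(j) = 0$. Now $H^{*}(BT^{n+1}) \simeq \mathbb{Z}[x_{1},\ldots,x_{n+1}]$ is a UFD, and both $B(v)$ and $\alpha(jv)$ are linear in the generators $x_{i}$ (they sit in $H^{2}(BT^{n+1})$), so divisibility of one linear form by another forces $B(v) = c_{j}\cdot\alpha(jv)$ for some $c_{j}\in\mathbb{Z}$. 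Choosing two indices $j_{1},j_{2}\in V_{2n}\setminus\{v,\overline{v}\}$ for which $\alpha(j_{1}v)$ and $\alpha(j_{2}v)$ are linearly independent, which Lemma~\ref{three-independent} supplies for $n\geq 2$ and which is directly verifiable from Definition~\ref{effective_axial_function} for $n=1$, I obtain $c_{j_{1}}\alpha(j_{1}v) = c_{j_{2}}\alpha(j_{2}v)$, hence $c_{j_{1}} = c_{j_{2}} = 0$, and therefore $B(v) = 0$.

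The same argument, applied now to the edges $j\overline{v}$ for $j \in V_{2n}\setminus\{v,\overline{v}\}$ (which exist by the definition of $E_{2n}$), yields $B(\overline{v}) = 0$ by identical reasoning: divisibility of the linear form $B(\overline{v})$ by two linearly independent linear forms $\alpha(j_{1}\overline{v})$ and $\alpha(j_{2}\overline{v})$ forces it to vanish. Combining, $B \equiv 0$, and thus $A = M_{v}$.

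This is a standard uniqueness-by-congruence argument of the kind ubiquitous in GKM theory, and there is no serious obstacle: the only input beyond elementary divisibility in a UFD is the availability of two linearly independent axial functions emanating from $v$ (and from $\overline{v}$) into $V_{2n}\setminus\{v,\overline{v}\}$, which is precisely what the three-independence property of Lemma~\ref{three-independent} provides.
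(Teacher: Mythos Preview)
Your proof is correct and rests on the same congruence-and-independence idea as the paper's, but your choice to work with the difference $B=A-M_{v}$ is a genuine simplification. In the paper's proof, the case $A(v)=0$ proceeds exactly as yours does, but for $A(\overline{v})$ the paper works directly with $A$: from $A(j)-A(\overline{v})=\alpha(jv)-A(\overline{v})\equiv 0\pmod{\alpha(j\overline{v})}$ one does not immediately get a divisibility statement for $A(\overline{v})$ alone, so the paper manipulates the resulting expression using Lemmas~\ref{easy-equation2} and~\ref{well-definedness-of-M} to rewrite $A(\overline{v})=x_{n}-x_{n+1}-2f(\overline{v})-(1+k_{j})\alpha(\overline{v}j)$ before invoking independence to force $k_{j}=-1$. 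Your subtraction trick absorbs that constant term into $M_{v}(\overline{v})$ ahead of time, so the two halves of the argument become identical and no explicit computation with the axial-function identities is needed. The tradeoff is that your version implicitly relies on already knowing $M_{v}\in H^{2}(\mathcal{Q}_{2n})$, whereas the paper's direct computation is self-contained; since that membership is established in the preceding proposition, your route is the cleaner one.
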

\begin{proof}
%It is enough to prove that $A(v)=M_{v}(v)=0$ and $A(\overline{v})=M_{v}(\overline{v})=x_{n}-x_{n+1}-2f(\overline{v})$.
We first claim that $A(v)=0=M_{v}(v)$.
By using the congruence relations on the edges $jv$ for all $j\in V_{2n}\setminus\{v,\overline{v}\}$, %$j\in I\setminus \{\overline{v}\}$, 
we have 
\begin{align*}
A(j)-A(v)&=M_{v}(j)-A(v) 
=\alpha(jv)-A(v)\equiv -A(v) \equiv 0\mod \alpha(jv).
\end{align*}
This shows that for every $j\in V_{2n}\setminus\{v,\overline{v}\}$ %I\setminus \{\overline{v}\}$ 
there exists an integer $k_{j}$ such that 
\begin{align*}
A(v)=-k_{j}\alpha(jv)=k_{j}\alpha(vj). %=k_{j}(f_{i}-f_{j}).
\end{align*}
In particular, for every $j_{1},\ j_{2} \in V_{2n}\setminus \{v,\overline{v}\}$, 
\begin{align*}
A(v)=k_{j_{1}}\alpha(vj_{1})=k_{j_{2}}\alpha(vj_{2}). %=k_{j}(f_{i}-f_{j}).
\end{align*}
By Lemma~\ref{three-independent},  
this gives that $k_{j}=0$, %for all $j\in V_{2n}\setminus \{v,\overline{v}\}$.
thus establishing $A(v)=0$.

We next claim that $A(\overline{v})=x_{n}-x_{n+1}-2f(\overline{v})=M_{v}(\overline{v})$.
By using the congruence relations on the edges $j\overline{v}$ for all $j\in V_{2n}\setminus \{v,\overline{v}\}$, we have 
\begin{align*}
A(j)-A(\overline{v})&=M_{v}(j)-A(\overline{v}) 
=\alpha(jv)-A(\overline{v}) \equiv 0\mod \alpha(j\overline{v}).
%&=(f(i)-f(j))-X(\overline{i})\equiv 0\mod \alpha(j\overline{i})=f(\overline{i})-f(j).
\end{align*}
This shows that for every $j\in V_{2n}\setminus \{v,\overline{v}\}$ there exists an integer $k_{j}$ which satisfies the following equation:
\begin{align*}
A(\overline{v})&=\alpha(jv)+k_{j}\alpha(j\overline{v}) \\
%&=-\alpha(\overline{j}\overline{v})-k_{j}\alpha(\overline{j}v) \quad ({\rm by\ Lemma~\ref{easy-equation2}}) \\
&=\alpha(\overline{v}\overline{j})+k_{j}\alpha(v\overline{j}) \quad ({\rm by\ Lemma~\ref{easy-equation2}\ and\ Definition~\ref{effective_axial_function}}) \\
&=x_{n}-x_{n+1}-2f(\overline{v})-\alpha(\overline{v}j)+k_{j}\alpha(v\overline{j}) \quad ({\rm by\ Lemma~\ref{well-definedness-of-M}}) \\
&=x_{n}-x_{n+1}-2f(\overline{v})-(1+k_{j})\alpha(\overline{v}j) \quad ({\rm by\ Lemma~\ref{easy-equation2}})
\end{align*}
In particular, this equation holds for every $j_{1},\ j_{2} \in V_{2n}\setminus \{v,\overline{v}\}$.
Therefore, 
by using the similar method for the proof of $A(v)=0$, we obtain $1+k_{j}=0$, thus $k_{j}=-1$.
Therefore, by Lemma~\ref{easy-equation2} and Lemma~\ref{well-definedness-of-M},
\begin{align*}
A(\overline{v})=\alpha(jv)-\alpha(j\overline{v})=x_{n}-x_{n+1}-2f(\overline{v}).
%&=\alpha(jv)-\alpha(j\overline{v}) \\
%&=-\alpha(\overline{j}\overline{v})-\alpha(j\overline{v})\quad ({\rm by\ Lemma~\ref{easy-equation2}}) \\
%&=\alpha(\overline{v}\overline{j})+\alpha(\overline{v}j)\quad ({\rm by\ Definition~\ref{effective_axial_function}}) \\
%&=x_{n}-x_{n+1}-2f(\overline{v})\quad ({\rm by\ Lemma~\ref{well-definedness-of-M}}).
\end{align*}
This establishes $A=M_{v}$.
\end{proof}

Recall $\iota:H^{*}(BT^{n+1})\to H^{*}(\mathcal{GQ}_{2n})$ in \eqref{algebra-structure}. 
By abuse of notation, we also denote $\iota(x):V_{2n}\to H^{*}(BT^{n+1})$ as $x:V_{2n}\to H^{*}(BT^{n+1})$ for an element $x\in H^{*}(BT^{n+1})$.
The following proposition shows that $x$ can be also presented by $M_{v}$'s. 
%%%%%%%%%%%%%%%%%%%%%%%%%%%%%%%%%%%%%%%%%%%%%%%%%%%%%%%%%%%%%%%%%%%%%%%%%%%
% Proposition 3.6
%%%%%%%%%%%%%%%%%%%%%%%%%%%%%%%%%%%%%%%%%%%%%%%%%%%%%%%%%%%%%%%%%%%%%%%%%%%
\begin{proposition}
\label{polynomial generators}
The generator $x_{i}\in H^{*}(BT^{n+1})$ for $i=1,\ldots, n+1$ is obtained by the following equality: 
\begin{align*}
x_{i}=M_{i+1}-M_{1}.
\end{align*}
\end{proposition}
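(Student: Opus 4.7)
The plan is to verify the equality at the level of $0$-cochains: I show that the function $M_{i+1}-M_{1}\colon V_{2n}\to H^{2}(BT^{n+1})$ takes the constant value $x_{i}$ at every vertex, so that under the convention identifying $\iota(x_{i})$ with $x_{i}$, the equality $x_{i}=M_{i+1}-M_{1}$ holds in $H^{2}(\mathcal{Q}_{2n})$. Since Definition~\ref{deg-2-gen} defines $M_{v}$ piecewise according to whether the input vertex equals $v$, equals $\overline{v}$, or neither, I split the check into the five subcases given by the position of $j$ relative to the four distinguished vertices $1,\ \overline{1}=2n+2,\ i+1,\ \overline{i+1}=2n+2-i$. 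These four vertices are distinct for every $i\in\{1,\ldots,n+1\}$ (for instance $2(i+1)\neq 2n+3$ since the right side is odd), so the case analysis is exhaustive.

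In the generic case $j\notin\{1,\overline{1},i+1,\overline{i+1}\}$, both $M_{1}(j)$ and $M_{i+1}(j)$ are given by the middle line of Definition~\ref{deg-2-gen}, so
\[
M_{i+1}(j)-M_{1}(j)=\bigl(f(i+1)-f(j)\bigr)-\bigl(f(1)-f(j)\bigr)=f(i+1)-f(1).
\]
Using Definition~\ref{effective_axial_function} one computes $f(1)=-x_{n+1}$ and $f(i+1)=x_{i}-x_{n+1}$ (note $i+1\le n+2$), so this difference equals $x_{i}$. When $j=1$ we have $M_{1}(1)=0$ and $M_{i+1}(1)=f(i+1)-f(1)=x_{i}$, and when $j=i+1$ we have $M_{i+1}(i+1)=0$ and $M_{1}(i+1)=f(1)-f(i+1)=-x_{i}$; in both cases the difference is again $x_{i}$.

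The only subtle step is handling the two vertices $j=\overline{1}$ and $j=\overline{i+1}$, where one of the two classes is defined by the third (``special'') line of Definition~\ref{deg-2-gen}. The key remark is that Lemma~\ref{easy-equation1} gives
\[
x_{n}-x_{n+1}-2f(\overline{v})=f(v)-f(\overline{v}),
\]
so the ``special'' value $M_{v}(\overline{v})$ coincides with the formal expression $f(v)-f(\overline{v})$ one would obtain by naively plugging $\overline{v}$ into the middle line. This observation collapses the remaining two cases into the same telescoping computation as the generic case, yielding $M_{i+1}(j)-M_{1}(j)=f(i+1)-f(1)=x_{i}$ once more. This completes the verification and establishes the equality; the only non-mechanical input needed beyond the definitions is Lemma~\ref{easy-equation1}, and I expect this collapsing of the ``special'' values via that lemma to be the one place where a careless reader might be tempted to carry out a more involved computation than necessary.
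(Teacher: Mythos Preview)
Your proof is correct and follows essentially the same strategy as the paper's: verify $M_{i+1}(j)-M_{1}(j)=x_{i}$ at every vertex $j$ by case analysis, using Definition~\ref{effective_axial_function} and Lemma~\ref{easy-equation1}. Your observation that $M_{v}(\overline{v})=x_{n}-x_{n+1}-2f(\overline{v})=f(v)-f(\overline{v})$ (so the ``special'' value agrees with the naive extension of the middle line) streamlines the two barred cases compared with the paper, which handles $j=\overline{1}$ and $j=\overline{i+1}$ by more explicit computation, including a further subcase split on $i$ for the latter.
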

\begin{proof}
Because $i=1,\ldots, n+1$, for all $j\in V_{2n}\setminus\{\overline{1}, \overline{i+1}\}$, we have that 
\begin{align*}
M_{i+1}(j)-M_{1}(j)&=f(i+1)-f(j)-(f(1)-f(j))=f(i+1)-f(1)=x_{i}-x_{n+1}-(x_{0}-x_{n+1})=x_{i}.
\end{align*}
For $j=\overline{1}=2n+2$, we have 
\begin{align*}
M_{i+1}(2n+2)-M_{1}(2n+2)&=f(i+1)-f(2n+2)-(x_{n}-x_{n+1}-2f(2n+2)) \\
&=f(i+1)+f(2n+2)-(f(i+1)+f(\overline{i+1}))\quad ({\rm by\ Lemma~\ref{easy-equation1}}) \\
&=f(2n+2)-f(\overline{i+1})=x_{i} \quad ({\rm by\ Definition~\ref{effective_axial_function}}).
%&=x_{n}-(x_{n}-x_{2n+2-\overline{i+1}}) \quad ({\rm by\ Definition~\ref{effective_axial_function}}) \\
%&=x_{(i+1)-1}=x_{i} .
\end{align*}
For $j=\overline{i+1}=2n+2-i$, we have 
\begin{align*}
M_{i+1}(2n+2-i)-M_{1}(2n+2-i)&=(x_{n}-x_{n+1}-2f(2n+2-i))-(f(1)-f(2n+2-i)) \\
&=x_{n}-f(2n+2-i) \quad ({\rm by\ Definition~\ref{effective_axial_function}}).
\end{align*}
In this case, by using Definition~\ref{effective_axial_function} again, we have the following equations.
\begin{align*}
x_{n}-f(2n+2-i) =
\left\{
\begin{array}{ll}
x_{n}-(x_{n}-x_{i}) & i=1,\ldots, n-1 \\
x_{n}-(x_{2n+1-i}-x_{n+1}) & i=n,n+1 
\end{array}
\right.
\end{align*}
Therefore, $M_{i+1}(2n+2-i)-M_{1}(2n+2-i)=x_{i}$.
These equations show that 
$M_{i+1}(v)-M_{1}(v)=x_{i}$
for all $v\in V_{2n}$.
This establishes the statement. 
\end{proof}

We also have the following proposition for the $0$-cochain presentation $f:V_{2n}\to H^{2}(BT^{n+1})$ defined in Definition~\ref{effective_axial_function}.
%%%%%%%%%%%%%%%%%%%%%%%%%%%%%%%%%%%%%%%%%%%%%%%%%%%%%%%%%%%%%%%%%%%%%%%%%%%
% Proposition 3.7
%%%%%%%%%%%%%%%%%%%%%%%%%%%%%%%%%%%%%%%%%%%%%%%%%%%%%%%%%%%%%%%%%%%%%%%%%%%
\begin{proposition}
\label{rel_M=-f}
The $0$-cochain presentaion $f:V_{2n}\to H^{2}(BT^{n+1})$ satisfies that 
$f=-M_{n+2}$.
\end{proposition}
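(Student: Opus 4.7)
The plan is to verify the identity $f(j) = -M_{n+2}(j)$ pointwise on $V_{2n}$, splitting into the three cases prescribed by Definition~\ref{deg-2-gen} for $v = n+2$. The key initial observation, read off directly from Definition~\ref{effective_axial_function}, is that
\[
f(n+2) = x_{(n+2)-1} - x_{n+1} = x_{n+1} - x_{n+1} = 0,
\]
and moreover $\overline{n+2} = n+1$, so $f(\overline{n+2}) = f(n+1) = x_n - x_{n+1}$.

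First, for the generic case $j \in V_{2n} \setminus \{n+2, n+1\}$, the definition gives $M_{n+2}(j) = f(n+2) - f(j) = -f(j)$, so the identity is immediate. Second, for $j = n+2$ itself, $M_{n+2}(n+2) = 0 = -f(n+2)$, again by the above computation of $f(n+2)$. Third, for the special vertex $j = \overline{n+2} = n+1$, the definition instead gives
\[
M_{n+2}(n+1) = x_n - x_{n+1} - 2 f(n+1) = (x_n - x_{n+1}) - 2(x_n - x_{n+1}) = -(x_n - x_{n+1}) = -f(n+1),
\]
which completes the verification.

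I do not expect any real obstacle here: the proof is essentially a direct unpacking of the definitions, and the only mildly interesting point is that the seemingly asymmetric formula for $M_v(\bar{v})$ collapses, for $v = n+2$, to the same pattern $f(v) - f(j)$ as the generic case, via Lemma~\ref{easy-equation1}. In fact one could present the argument slightly more conceptually by noting that Lemma~\ref{easy-equation1} forces $M_v(\bar{v}) = f(v) - f(\bar{v})$ for every $v$, so that $M_v(j) = f(v) - f(j)$ uniformly on $V_{2n}$; specializing to the vertex $v = n+2$ where $f(v) = 0$ then yields $M_{n+2} = -f$ in one line. Either presentation suffices.
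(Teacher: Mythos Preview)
Your proof is correct and follows the same approach as the paper, which simply states that the identity can be checked directly from the definitions of $f$ and $M_{n+2}$; you have merely spelled out the case-by-case verification that the paper omits. Your additional observation that Lemma~\ref{easy-equation1} yields the uniform formula $M_v(j)=f(v)-f(j)$ for all $j\in V_{2n}$ is a nice way to package the argument.
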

\begin{proof}
By definitions of $f$ and $M_{n+2}$, we can easily check the statement. 
%Because of Definition~\ref{effective_axial_function}, we have $f(n+2)=x_{n+1}-x_{n+1}=0$.
%Because of Definition~\ref{deg-2-gen}, for $i\not=\overline{n+2}=n+1$, we have 
%\begin{align*}
%M_{n+2}(i)=f(n+2)-f(i)&=-f(i).
%\end{align*}
%For $i=n+1$, because $f(n+1)=x_{n}-x_{n+1}$, we have 
%\begin{align*}
%M_{n+2}(n+1)&=x_{n}-x_{n+1}-2f(n+1)=-f(n+1).
%&=x_{n}-x_{n+1}-2(x_{n}-x_{n+1}) \\
%&=-f(n+1).
%\end{align*}
%This establishes the statement. 
\end{proof}

%%%%%%%%%%%%%%%%%%%%%%%%%%%%%%%%%%%%%%%%%%%%%%%%%%%%%%%%%%%%%%%%%%%%%%%%%%%
% Example 3.8
%%%%%%%%%%%%%%%%%%%%%%%%%%%%%%%%%%%%%%%%%%%%%%%%%%%%%%%%%%%%%%%%%%%%%%%%%%%
\begin{example}
\label{using_ex_Fig3}
The left figure of Figure~\ref{fig_0-cochain_presentation} in Section~\ref{sect:2.2} also represents that $f=-M_{4}$. 
\end{example}

%%%%%%%%%%%%%%%%%%%%%%%%%%%%%%%%%%%%%%%%%%%%%%%%%%%%%%%%%%%%%%%%%%%%%%%%%%%
% Section 3.3
%%%%%%%%%%%%%%%%%%%%%%%%%%%%%%%%%%%%%%%%%%%%%%%%%%%%%%%%%%%%%%%%%%%%%%%%%%%
\subsection{Higher degree generators}
\label{sect:3.3}

We next define the degree $2l$ element $\Delta_{K}$ in $H^{2l}(\mathcal{GQ}_{2n})$ for some $K\subset V_{2n}$ such that $|K|=l+1$, where $|K|$ is the cardinality of $K$.

For a non-empty subset $K\subset V_{2n}$, by definition of $\Gamma_{2n}$,
the following two properties are equivalent:
\begin{itemize}
\item the full-subgraph $\Gamma_{K}$ is  the complete subgraph of $\Gamma_{2n}$;
\item if $i\in K$, then $\overline{i}\not\in K$ (or equivalently $\{i, \overline{i}\}\not\subset K$ for all $i\in V_{2n}$).
\end{itemize}
We call one of these properties {\it the property $(\ast)$}.
Note that if $K$ satisfies the property $(*)$, then its cardinality satisfies $1\le |K|\le n+1$.
%Moreover, by the definition of edges $E_{2n}$, every pair $\{i,j\}$ of vertices in $K$ are connected by an edge $ij\in E_{2n}$. Therefore, the full-subgraph $\Gamma_{K}$ of $K$ consists of a complete subgraph in $\Gamma_{2n}$. 

%%%%%%%%%%%%%%%%%%%%%%%%%%%%%%%%%%%%%%%%%%%%%%%%%%%%%%%%%%%%%%%%%%%%%%%%%%%
% Definition 3.9
%%%%%%%%%%%%%%%%%%%%%%%%%%%%%%%%%%%%%%%%%%%%%%%%%%%%%%%%%%%%%%%%%%%%%%%%%%%
\begin{definition}[degree $(\ge )2n$ generators]
\label{higher_deg_generators}
Let $K\subset V_{2n}=[2n+2]$ be a non-empty subset that satisfies the property $(\ast)$.
We define the function $\Delta_{K}:V_{2n}\to H^{4n-2|K|+2}(BT^{n+1})$ by the following map.
\begin{align}
\label{higher-generator}
\Delta_{K}(j)=\left\{
\begin{array}{ll}
{\displaystyle \prod_{k\not\in K\cup \{\overline{j}\}}\alpha(jk)=\prod_{k\not\in K\cup \{\overline{j}\}}(f(k)-f(j))} & j\in K \\
0 & j\not\in K
\end{array}
\right.
\end{align}
\end{definition}
Note that $\Delta_{K}$ is nothing but the {\it Thom class} of the GKM subgraph $\Gamma_{K}$ (see \cite[Section 4]{MMP}). %whose vertices consist of $K$. 
Therefore, by the similar arguments for the proof of \cite[Lemma 4.1]{MMP}, we have the following lemma.
%%%%%%%%%%%%%%%%%%%%%%%%%%%%%%%%%%%%%%%%%%%%%%%%%%%%%%%%%%%%%%%%%%%%%%%%%%%
% Lemma 3.6
%%%%%%%%%%%%%%%%%%%%%%%%%%%%%%%%%%%%%%%%%%%%%%%%%%%%%%%%%%%%%%%%%%%%%%%%%%%
\begin{lemma}  
If $K\subset V_{2n}$ satisfies the property $(\ast)$, then  
%The function $\Delta_{K}:V_{2n}\to H^{4n-2|K|+2}(BT^{n+1})$ defined by \eqref{higher-generator} satisfies 
%\begin{align*}
$\Delta_{K}\in H^{4n-2|K|+2}(\mathcal{GQ}_{2n})$.
%\end{align*}
\end{lemma}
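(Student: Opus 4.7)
The plan is to verify directly from the definition of $H^{*}(\mathcal{Q}_{2n})$ in \eqref{graph-equivariant-cohomology} that $\Delta_K$ satisfies the congruence relation on every edge $ij \in E_{2n}$, proceeding by case analysis on how $i$ and $j$ interact with $K$.

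If both $i, j \notin K$, both values of $\Delta_K$ vanish. If exactly one of them, say $i$, lies in $K$, then $\Delta_K(j) = 0$ and we must show $\alpha(ij) \mid \Delta_K(i)$. Because $ij \in E_{2n}$, by definition of the edge set we have $j \neq \overline{i}$, and combined with $j \notin K$ this forces $j \notin K \cup \{\overline{i}\}$. Hence $\alpha(ij) = f(j) - f(i)$ is literally one of the factors in the product \eqref{higher-generator} defining $\Delta_K(i)$, and the congruence is immediate.

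The main case is $i, j \in K$. Here I would isolate the common index set $S := V_{2n} \setminus (K \cup \{\overline{i}, \overline{j}\})$ and rewrite
\begin{align*}
\Delta_K(i) &= (f(\overline{j}) - f(i)) \cdot \prod_{k \in S}(f(k) - f(i)), \\
\Delta_K(j) &= (f(\overline{i}) - f(j)) \cdot \prod_{k \in S}(f(k) - f(j)).
\end{align*}
The step I expect to be the real content here is the identity $f(\overline{j}) - f(i) = f(\overline{i}) - f(j)$; this follows directly from Lemma~\ref{easy-equation1}, since both sides equal $x_n - x_{n+1} - f(i) - f(j)$. With this common linear factor pulled out, it remains to show the two products over $S$ agree modulo $\alpha(ij) = f(j) - f(i)$, which is immediate because each factor $f(k) - f(i)$ is congruent to $f(k) - f(j)$ modulo $\alpha(ij)$.

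Finally, a quick degree count gives $2(|S|+1) = 2(2n-|K|+1) = 4n-2|K|+2$ as required, and property $(\ast)$ is used precisely to ensure $\overline{i}, \overline{j} \notin K$ so that the exceptional factors $f(\overline{j}) - f(i)$ and $f(\overline{i}) - f(j)$ genuinely appear in the respective products and the two index sets really do differ only by swapping $\overline{i}$ and $\overline{j}$. As the excerpt notes, this case analysis is just the specialization to $\mathcal{Q}_{2n}$ of the Thom class construction in \cite[Lemma~4.1]{MMP}, with Lemma~\ref{easy-equation1} supplying the one arithmetic ingredient specific to the quadric.
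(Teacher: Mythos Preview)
Your proof is correct and follows essentially the same approach that the paper invokes: the paper does not spell out the argument but simply cites \cite[Lemma~4.1]{MMP}, and your case analysis is precisely that Thom-class verification made explicit for $\mathcal{Q}_{2n}$, with Lemma~\ref{easy-equation1} handling the one step (the equality $f(\overline{j})-f(i)=f(\overline{i})-f(j)$) that is specific to the quadric graph rather than a general torus graph.
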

\begin{remark}
\label{about_Thom_class}
Geometrically, $\Delta_{K}$ is the equivariant Thom class of the invariant submanifold in $Q_{2n}$ (see \cite{Ma99}) which is diffeomorphic to the projective space whose fixed points consisting of $K$.
For example, there exists the following subspace which is diffeomorphic to $\mathbb{C}P^{l-1}$ in $Q_{2n}$ for every $1\le l\le n+1$:
\begin{align*}
\{[z_{1}:z_{2}:\cdots :z_{l}:0:\cdots:0]\in Q_{2n}\ |\ z_{i}\in \mathbb{C}\}\simeq \mathbb{C}P^{l-1}.
\end{align*}
In this case, $K=[l]\subset [2n+2]$ and 
the class $\Delta_{K}\in H^{4n-2l+2}(\mathcal{GQ}_{2n})$ corresponds to the equivariant Thom class of $\mathbb{C}P^{l-1}\subset Q_{2n}$ in the equivariant cohomology $H_{T}^{4n-2l+2}(Q_{2n})$.
\end{remark}

%%%%%%%%%%%%%%%%%%%%%%%%%%%%%%%%%%%%%%%%%%%%%%%%%%%%%%%%%%%%%%%%%%%%%%%%%%%
% Example 3.12
%%%%%%%%%%%%%%%%%%%%%%%%%%%%%%%%%%%%%%%%%%%%%%%%%%%%%%%%%%%%%%%%%%%%%%%%%%%
\begin{example}
For the GKM graph $\mathcal{GQ}_{4}$, the set of vertices $K=\{1,2,3\}$ satisfies the property $(\ast)$.
Figure~\ref{fig_higher_gen} represents the class $\Delta_{K}\in H^{4}(\mathcal{GQ}_{4})$. 
%%%%%%%%%%%%%%%%%%%%%%%%%%%%%%%%%%%%%%%%%%%%%%%%%%%%%%%%%%%%%%%%%%%%%%%%%%%
% Figure 5
%%%%%%%%%%%%%%%%%%%%%%%%%%%%%%%%%%%%%%%%%%%%%%%%%%%%%%%%%%%%%%%%%%%%%%%%%%%
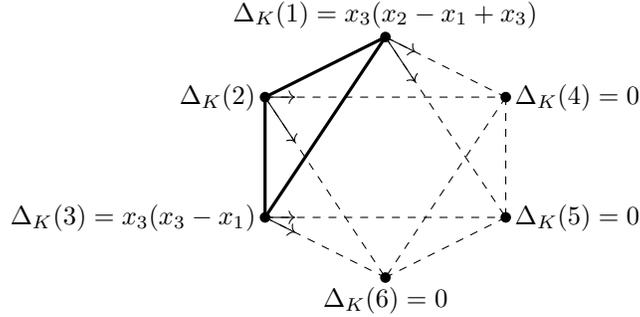
\begin{figure}[H]
\begin{tikzpicture}
\begin{scope}[xscale=0.4, yscale=0.4]
\fill(0,4) circle (5pt);
\node[above] at (0,4) {$\Delta_{K}(1)=x_{3}(x_{2}-x_{1}+x_{3})$};
\fill(-4,2) circle (5pt);
\node[left] at (-4,2) {$\Delta_{K}(2)$};
\fill(-4,-2) circle (5pt);
\node[left] at (-4,-2) {$\Delta_{K}(3)=x_{3}(x_{3}-x_{1})$};
\fill(0,-4) circle (5pt);
\node[below] at (0,-4) {$\Delta_{K}(6)=0$};
\fill(4,-2) circle (5pt);
\node[right] at (4,-2) {$\Delta_{K}(5)=0$};
\fill(4,2) circle (5pt);
\node[right] at (4,2) {$\Delta_{K}(4)=0$};

\draw[very thick] (0,4)--(-4,2);
\draw[very thick] (0,4)--(-4,-2);
\draw[dashed] (0,4)--(4,-2);
\draw[dashed] (0,4)--(4,2);
\draw[very thick] (-4,2)--(-4,-2);
\draw[dashed] (-4,2)--(0,-4);
\draw[dashed] (-4,2)--(4,2);
\draw[dashed] (-4,-2)--(4,-2);
\draw[dashed] (-4,-2)--(0,-4);
\draw[dashed] (4,2)--(4,-2);
\draw[dashed] (4,2)--(0,-4);
\draw[dashed] (4,-2)--(0,-4);

\draw[->] (0,4)--(1,3.5);
\draw[->] (0,4)--(1,2.5);

\draw[->] (-4,2)--(-3,0.5);
\draw[->] (-4,2)--(-3,2);

\draw[->] (-4,-2)--(-3,-2.5);
\draw[->] (-4,-2)--(-3,-2);
\end{scope}
\end{tikzpicture}
\caption{$\Delta_{K}$ for $K=\{1,2,3\}$, where $\Delta_{K}(2)=(x_{3}-x_{1})(x_{2}-x_{1}+x_{3})$.}
\label{fig_higher_gen}
\end{figure}
\end{example}

%%%%%%%%%%%%%%%%%%%%%%%%%%%%%%%%%%%%%%%%%%%%%%%%%%%%%%%%%%%%%%%%%%%%%%%%%%%
% Example 3.13
%%%%%%%%%%%%%%%%%%%%%%%%%%%%%%%%%%%%%%%%%%%%%%%%%%%%%%%%%%%%%%%%%%%%%%%%%%%
\begin{example}
For the GKM graph $\mathcal{GQ}_{4}$, the set of vertices $L=\{1,2\}$ also satisfies the property $(*)$.
Figure~\ref{fig_higher_gen2} represents the class $\Delta_{L}\in H^{6}(\mathcal{GQ}_{4})$. 
%%%%%%%%%%%%%%%%%%%%%%%%%%%%%%%%%%%%%%%%%%%%%%%%%%%%%%%%%%%%%%%%%%%%%%%%%%%
% Figure 6
%%%%%%%%%%%%%%%%%%%%%%%%%%%%%%%%%%%%%%%%%%%%%%%%%%%%%%%%%%%%%%%%%%%%%%%%%%%
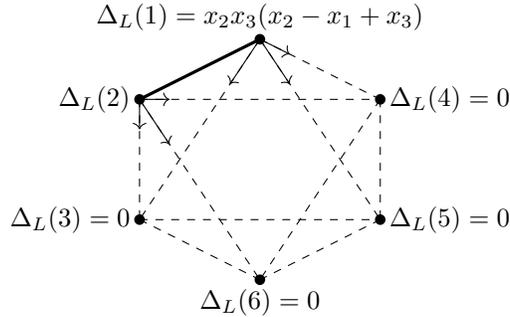
\begin{figure}[H]
\begin{tikzpicture}
\begin{scope}[xscale=0.4, yscale=0.4]
\fill(0,4) circle (5pt);
\node[above] at (0,4) {$\Delta_{L}(1)=x_{2}x_{3}(x_{2}-x_{1}+x_{3})$};
\fill(-4,2) circle (5pt);
\node[left] at (-4,2) {$\Delta_{L}(2)$};
\fill(-4,-2) circle (5pt);
\node[left] at (-4,-2) {$\Delta_{L}(3)=0$};
\fill(0,-4) circle (5pt);
\node[below] at (0,-4) {$\Delta_{L}(6)=0$};
\fill(4,-2) circle (5pt);
\node[right] at (4,-2) {$\Delta_{L}(5)=0$};
\fill(4,2) circle (5pt);
\node[right] at (4,2) {$\Delta_{L}(4)=0$};

\draw[very thick] (0,4)--(-4,2);
\draw[dashed] (0,4)--(-4,-2);
\draw[dashed] (0,4)--(4,-2);
\draw[dashed] (0,4)--(4,2);
\draw[dashed] (-4,2)--(-4,-2);
\draw[dashed] (-4,2)--(0,-4);
\draw[dashed] (-4,2)--(4,2);
\draw[dashed] (-4,-2)--(4,-2);
\draw[dashed] (-4,-2)--(0,-4);
\draw[dashed] (4,2)--(4,-2);
\draw[dashed] (4,2)--(0,-4);
\draw[dashed] (4,-2)--(0,-4);

\draw[->] (0,4)--(1,3.5);
\draw[->] (0,4)--(1,2.5);
\draw[->] (0,4)--(-1,2.5);

\draw[->] (-4,2)--(-3,0.5);
\draw[->] (-4,2)--(-3,2);
\draw[->] (-4,2)--(-4,1);

\end{scope}
\end{tikzpicture}
\caption{$\Delta_{L}$ for $L=\{1,2\}$, where $\Delta_{L}(2)=(x_{2}-x_{1})(x_{3}-x_{1})(x_{2}-x_{1}+x_{3})$}
\label{fig_higher_gen2}
\end{figure}
\end{example}

%%%%%%%%%%%%%%%%%%%%%%%%%%%%%%%%%%%%%%%%%%%%%%%%%%%%%%%%%%%%%%%%%%%%%%%%%%%
% Section 4
%%%%%%%%%%%%%%%%%%%%%%%%%%%%%%%%%%%%%%%%%%%%%%%%%%%%%%%%%%%%%%%%%%%%%%%%%%%
\section{Four relations}
\label{sect:4}

In this section, we introduce the four types of relations among $M_{v}$'s and $\Delta_{K}$'s (see Lemma~\ref{lem-for-rel1}--~\ref{lem-for-rel4}).

%%%%%%%%%%%%%%%%%%%%%%%%%%%%%%%%%%%%%%%%%%%%%%%%%%%%%%%%%%%%%%%%%%%%%%%%%%%
% Relation 1
%%%%%%%%%%%%%%%%%%%%%%%%%%%%%%%%%%%%%%%%%%%%%%%%%%%%%%%%%%%%%%%%%%%%%%%%%%%
%\setcounter{relation}{0}
\begin{relation}[$\prod_{\cap J=\emptyset}G_{J}=0$]
\label{relation1}
We use the following notation for $J\subset V_{2n}$.
\begin{align}
\label{def_G}
G_{J}:=\left\{
\begin{array}{ll}
M_{v} & \text{if $J=V_{2n}\setminus\{v\}$ for a vertex $v\in V_{2n}$} \\
\Delta_{J} & \text{if $J$ satisfies that the property $(\ast)$, i.e., $\{i,\overline{i}\}\not\subset J$ for every $i\in V_{2n}$}
\end{array}
\right.
\end{align}
By Definition~\ref{deg-2-gen} and Definition~\ref{higher_deg_generators},
we have the following relation in $H^{*}(\mathcal{GQ}_{2n})$ (see Figure~\ref{fig_rel1}).
%%%%%%%%%%%%%%%%%%%%%%%%%%%%%%%%%%%%%%%%%%%%%%%%%%%%%%%%%%%%%%%%%%%%%%%%%%%
% Lemma 4.1
%%%%%%%%%%%%%%%%%%%%%%%%%%%%%%%%%%%%%%%%%%%%%%%%%%%%%%%%%%%%%%%%%%%%%%%%%%%
\begin{lemma}[Relation 1]
\label{lem-for-rel1}
There is the following relation:
\begin{align}
\label{1st-rel}
\prod_{\cap J=\emptyset}G_{J}=0.
\end{align}
\end{lemma}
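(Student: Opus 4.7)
The plan is to prove the relation by testing it pointwise. Recall that $H^{*}(\mathcal{Q}_{2n})$ is, by definition, a subring of the ring of functions $V_{2n} \to H^{*}(BT^{n+1})$, so the element $\prod_{\cap J = \emptyset} G_{J}$ vanishes as soon as it takes the value $0$ at every vertex. This reduces the lemma to a vertex-by-vertex calculation and sidesteps any need to work inside the congruence ring directly.

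Fix an arbitrary $v \in V_{2n}$. The hypothesis $\bigcap_{J} J = \emptyset$ forces at least one index $J_{0}$ in the product to satisfy $v \notin J_{0}$. I would then split into the two cases dictated by \eqref{def_G}. If $J_{0} = V_{2n} \setminus \{v'\}$ for some vertex $v'$, then $v \notin J_{0}$ is equivalent to $v = v'$, so $G_{J_{0}} = M_{v}$ and the top line of Definition \ref{deg-2-gen} gives $M_{v}(v) = 0$. If instead $J_{0}$ satisfies the property $(\ast)$, then $G_{J_{0}} = \Delta_{J_{0}}$ and the second line of \eqref{higher-generator} in Definition \ref{higher_deg_generators} gives $\Delta_{J_{0}}(v) = 0$ directly from $v \notin J_{0}$.

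In either case the factor $G_{J_{0}}$ contributes $0$ to the product evaluated at $v$, so $\bigl(\prod_{\cap J = \emptyset} G_{J}\bigr)(v) = 0$. Since $v$ was arbitrary, the product is the zero function, which proves \eqref{1st-rel}. No genuine obstacle arises: the statement is essentially a combinatorial repackaging of the defining support conditions for $M_{v}$ and $\Delta_{J}$, and the only thing to verify is the correct matching between ``$v \notin J$'' and ``$G_{J}(v) = 0$'' in each of the two cases.
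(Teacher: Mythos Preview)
Your proof is correct and is essentially the same as the paper's approach: the paper does not write out a separate proof but simply notes that the relation follows from Definition~\ref{deg-2-gen} and Definition~\ref{higher_deg_generators}, which is exactly the pointwise vanishing argument you give.
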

\end{relation}

%%%%%%%%%%%%%%%%%%%%%%%%%%%%%%%%%%%%%%%%%%%%%%%%%%%%%%%%%%%%%%%%%%%%%%%%%%%
% Figure 7
%%%%%%%%%%%%%%%%%%%%%%%%%%%%%%%%%%%%%%%%%%%%%%%%%%%%%%%%%%%%%%%%%%%%%%%%%%%
\begin{figure}[H]
\begin{tikzpicture}
\begin{scope}[xscale=0.3, yscale=0.3]
\fill(0,4) circle (7pt);
\node[above] at (0,4) {$\Delta_{\{1\}}(1)$}; 
\fill(-4,2) circle (5pt);
\node[left] at (-4,2) {$0$};
\fill(-4,-2) circle (5pt);
\node[left] at (-4,-2) {$0$};
\fill(0,-4) circle (5pt);
\node[below] at (0,-4) {$0$};
\fill(4,-2) circle (5pt);
\node[right] at (4,-2) {$0$};
\fill(4,2) circle (5pt);
\node[right] at (4,2) {$0$};

\draw[dashed] (0,4)--(-4,2);
\draw[dashed] (0,4)--(-4,-2);
\draw[dashed] (0,4)--(4,-2);
\draw[dashed] (0,4)--(4,2);
\draw[dashed] (-4,2)--(-4,-2);
\draw[dashed] (-4,2)--(0,-4);
\draw[dashed] (-4,2)--(4,2);
\draw[dashed] (-4,-2)--(4,-2);
\draw[dashed] (-4,-2)--(0,-4);
\draw[dashed] (4,2)--(4,-2);
\draw[dashed] (4,2)--(0,-4);
\draw[dashed] (4,-2)--(0,-4);

\draw[->] (0,4)--(1,3.5);
\draw[->] (0,4)--(1,2.5);
\draw[->] (0,4)--(-1,2.5);
\draw[->] (0,4)--(-1,3.5);

\node at (6,0) {$\times$};

\fill(12,4) circle (5pt);
\node[above] at (12,4) {$0$};
\fill(8,2) circle (5pt);
\node[above] at (8,2) {$M_{1}(2)$};
\fill(8,-2) circle (5pt);
\node[below] at (8,-2) {$M_{1}(3)$};
\fill(12,-4) circle (5pt);
\node[below] at (12,-4) {$M_{1}(6)$};
\fill(16,-2) circle (5pt);
\node[below] at (16,-2) {$M_{1}(5)$};
\fill(16,2) circle (5pt);
\node[above] at (16,2) {$M_{1}(4)$};

\draw[dashed] (12,4)--(8,2);
\draw[dashed] (12,4)--(8,-2);
\draw[dashed] (12,4)--(16,-2);
\draw[dashed] (12,4)--(16,2);
\draw[very thick] (8,2)--(8,-2);
\draw[very thick] (8,2)--(12,-4);
\draw[very thick] (8,2)--(16,2);
\draw[very thick] (8,-2)--(16,-2);
\draw[very thick] (8,-2)--(12,-4);
\draw[very thick] (16,2)--(16,-2);
\draw[very thick] (16,2)--(12,-4);
\draw[very thick] (16,-2)--(12,-4);

\draw[->] (8,2)--(10,3);
\draw[->] (8,-2)--(10,1);
\draw[->] (16,-2)--(14,1);
\draw[->] (16,2)--(14,3);

\node at (18,0) {$=$};

\fill(24,4) circle (5pt);
\node[above] at (24,4) {$0$};
\fill(20,2) circle (5pt);
\node[above] at (20,2) {$0$};
\fill(20,-2) circle (5pt);
\node[below] at (20,-2) {$0$};
\fill(24,-4) circle (5pt);
\node[below] at (24,-4) {$0$};
\fill(28,-2) circle (5pt);
\node[below] at (28,-2) {$0$};
\fill(28,2) circle (5pt);
\node[above] at (28,2) {$0$};

\draw[thick] (24,4)--(20,2);
\draw[thick] (24,4)--(20,-2);
\draw[thick] (24,4)--(28,-2);
\draw[thick] (24,4)--(28,2);
\draw[thick] (20,2)--(20,-2);
\draw[thick] (20,2)--(24,-4);
\draw[thick] (20,2)--(28,2);
\draw[thick] (20,-2)--(28,-2);
\draw[thick] (20,-2)--(24,-4);
\draw[thick] (28,2)--(28,-2);
\draw[thick] (28,2)--(24,-4);
\draw[thick] (28,-2)--(24,-4);
\end{scope}
\end{tikzpicture}
\caption{In $H^{*}(\mathcal{GQ}_{4})$, the relation $\Delta_{\{1\}}\cdot M_{1}=0$ holds  (Relation~\ref{relation1}).}
\label{fig_rel1}
\end{figure}
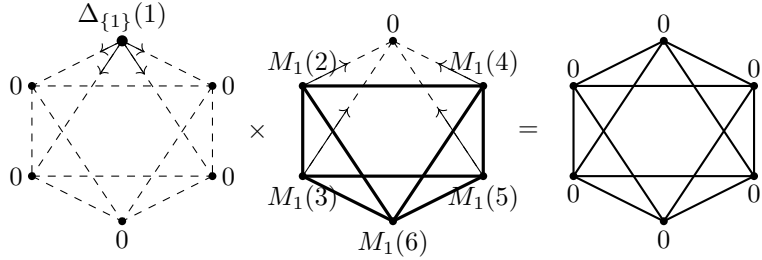

%%%%%%%%%%%%%%%%%%%%%%%%%%%%%%%%%%%%%%%%%%%%%%%%%%%%%%%%%%%%%%%%%%%%%%%%%%%
% Relation 2
%%%%%%%%%%%%%%%%%%%%%%%%%%%%%%%%%%%%%%%%%%%%%%%%%%%%%%%%%%%%%%%%%%%%%%%%%%%
\begin{relation}[$M_{i}+M_{\overline{i}}=M_{j}+M_{\overline{j}}$]
\label{relation2}
We define the element $X\in H^{2}(\mathcal{GQ}_{2n})$ by the map $X:V_{2n}\to H^{2}(BT^{n+1})$ which satisfies  
\begin{align*}
X(k):=x_{n}-x_{n+1}-2f(k)
\end{align*}
for all $k\in V_{2n}$.
Then, by Lemma~\ref{well-definedness-of-M}, 
for every $j\in V_{2n}$ such that $j, \overline{j}\not=k$, there exists the following equation: 
\begin{align*}
X(k)=
\alpha(kj)+\alpha(k\overline{j}).
\end{align*}
By Lemma~\ref{easy-equation1} and 
Definition~\ref{deg-2-gen}, we have the following relation (see Figure~\ref{fig_rel2}).
%%%%%%%%%%%%%%%%%%%%%%%%%%%%%%%%%%%%%%%%%%%%%%%%%%%%%%%%%%%%%%%%%%%%%%%%%%%
% Lemma 4.2
%%%%%%%%%%%%%%%%%%%%%%%%%%%%%%%%%%%%%%%%%%%%%%%%%%%%%%%%%%%%%%%%%%%%%%%%%%%
\begin{lemma}[Relation 2]
\label{lem-for-rel2}
For every $v\in V_{2n}$, 
there is the following relation:
\begin{align}
\label{2nd-rel}
M_{v}+M_{\overline{v}}=X.
\end{align}
\end{lemma}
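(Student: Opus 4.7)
The plan is to verify the equality of functions $M_v + M_{\overline{v}} = X$ pointwise on $V_{2n}$, by splitting into three cases according to whether the input vertex $k$ coincides with $v$, with $\overline{v}$, or with neither.

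First, for $k = v$, I would use that $M_v(v) = 0$ by the first clause of Definition~\ref{deg-2-gen}, while $M_{\overline{v}}(v)$ falls under the third clause of Definition~\ref{deg-2-gen} applied to the vertex $\overline{v}$ (whose ``opposite'' is $\overline{\overline{v}} = v$), so $M_{\overline{v}}(v) = x_n - x_{n+1} - 2f(v) = X(v)$. The case $k = \overline{v}$ is completely symmetric, giving $M_v(\overline{v}) + M_{\overline{v}}(\overline{v}) = X(\overline{v})$ directly from the same clause.

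The only case requiring a small computation is $k \neq v, \overline{v}$. Here both $M_v(k)$ and $M_{\overline{v}}(k)$ fall under the middle clause of Definition~\ref{deg-2-gen}, giving
\begin{align*}
M_v(k) + M_{\overline{v}}(k) = (f(v) - f(k)) + (f(\overline{v}) - f(k)) = f(v) + f(\overline{v}) - 2f(k).
\end{align*}
Applying Lemma~\ref{easy-equation1} to replace $f(v) + f(\overline{v})$ by $x_n - x_{n+1}$ yields exactly $X(k)$, completing the pointwise verification.

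There is no serious obstacle; the statement reduces to book-keeping with the three branches of the definition of $M_v$ together with the single identity $f(v) + f(\overline{v}) = x_n - x_{n+1}$ from Lemma~\ref{easy-equation1}. The only mild subtlety is recognizing that when evaluating $M_{\overline{v}}$ at the input $v$, it is $v$ (not $\overline{v}$) that plays the role of the ``opposite'' vertex triggering the third clause of Definition~\ref{deg-2-gen}, so one must keep track of which of $v, \overline{v}$ is the subscript versus the argument.
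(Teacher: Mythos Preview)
Your proof is correct and follows exactly the approach the paper indicates: the paper simply asserts that the relation follows from Lemma~\ref{easy-equation1} and Definition~\ref{deg-2-gen} without spelling out the case analysis, and you have carried out precisely that verification in detail.
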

\end{relation}

%%%%%%%%%%%%%%%%%%%%%%%%%%%%%%%%%%%%%%%%%%%%%%%%%%%%%%%%%%%%%%%%%%%%%%%%%%%
% Figure 8
%%%%%%%%%%%%%%%%%%%%%%%%%%%%%%%%%%%%%%%%%%%%%%%%%%%%%%%%%%%%%%%%%%%%%%%%%%%
\begin{figure}[H]
\begin{tikzpicture}
\begin{scope}[xscale=0.3, yscale=0.3]
\fill(-4,2) circle (5pt);
\fill(-4,-2) circle (5pt);
\fill(0,-4) circle (5pt);
\fill(4,-2) circle (5pt);
\fill(4,2) circle (5pt);

\draw[very thick] (0,4)--(-4,2);
\draw[very thick] (0,4)--(-4,-2);
\draw[very thick] (0,4)--(4,-2);
\draw[very thick] (0,4)--(4,2);
\draw[very thick] (-4,2)--(-4,-2);
\draw[dashed] (-4,2)--(0,-4);
\draw[very thick] (-4,2)--(4,2);
\draw[very thick] (-4,-2)--(4,-2);
\draw[dashed] (-4,-2)--(0,-4);
\draw[very thick] (4,2)--(4,-2);
\draw[dashed] (4,2)--(0,-4);
\draw[dashed] (4,-2)--(0,-4);

\draw[->] (-4,2)--(-3,0.5);
\draw[->] (-4,-2)--(-3,-2.5);
\draw[->] (4,2)--(3,0.5);
\draw[->] (4,-2)--(3,-2.5);

\node at (6,0) {$+$};

\fill(12,4) circle (5pt);
\fill(8,2) circle (5pt);
\fill(8,-2) circle (5pt);
\fill(12,-4) circle (5pt);
\fill(16,-2) circle (5pt);
\fill(16,2) circle (5pt);

\draw[dashed] (12,4)--(8,2);
\draw[dashed] (12,4)--(8,-2);
\draw[dashed] (12,4)--(16,-2);
\draw[dashed] (12,4)--(16,2);
\draw[very thick] (8,2)--(8,-2);
\draw[very thick] (8,2)--(12,-4);
\draw[very thick] (8,2)--(16,2);
\draw[very thick] (8,-2)--(16,-2);
\draw[very thick] (8,-2)--(12,-4);
\draw[very thick] (16,2)--(16,-2);
\draw[very thick] (16,2)--(12,-4);
\draw[very thick] (16,-2)--(12,-4);

\draw[->] (8,2)--(10,3);
\draw[->] (8,-2)--(10,1);
\draw[->] (16,-2)--(14,1);
\draw[->] (16,2)--(14,3);

\node at (18,0) {$=$};

\fill(24,4) circle (5pt);
\node[above] at (24,4) {$x_{2}+x_{3}$};
\fill(20,2) circle (5pt);
\node[above] at (20,2) {$x_{2}-2x_{1}+x_{3}$};
\fill(20,-2) circle (5pt);
\node[below] at (20,-2) {$x_{3}-x_{2}$};
\fill(24,-4) circle (5pt);
\node[below] at (24,-4) {$-x_{2}-x_{3}$};
\fill(28,-2) circle (5pt);
\node[below] at (28,-2) {$2x_{1}-x_{2}-x_{3}$};
\fill(28,2) circle (5pt);
\node[above] at (28,2) {$x_{2}-x_{3}$};

\draw[thick] (24,4)--(20,2);
\draw[thick] (24,4)--(20,-2);
\draw[thick] (24,4)--(28,-2);
\draw[thick] (24,4)--(28,2);
\draw[thick] (20,2)--(20,-2);
\draw[thick] (20,2)--(24,-4);
\draw[thick] (20,2)--(28,2);
\draw[thick] (20,-2)--(28,-2);
\draw[thick] (20,-2)--(24,-4);
\draw[thick] (28,2)--(28,-2);
\draw[thick] (28,2)--(24,-4);
\draw[thick] (28,-2)--(24,-4);

\node[above] at (12,4) {$M_{\overline{6}}(1)=0$};
\node[below] at (0,-4) {$M_{6}(6)=0$};

\end{scope}
\end{tikzpicture}
\caption{$M_{6}+M_{\overline{6}}=X$ (Relation~\ref{relation2}), where 
$\overline{6}=1\in V_{4}$.}
\label{fig_rel2}
\end{figure}

%%%%%%%%%%%%%%%%%%%%%%%%%%%%%%%%%%%%%%%%%%%%%%%%%%%%%%%%%%%%%%%%%%%%%%%%%%%
% Relation 3
%%%%%%%%%%%%%%%%%%%%%%%%%%%%%%%%%%%%%%%%%%%%%%%%%%%%%%%%%%%%%%%%%%%%%%%%%%%
\begin{relation}[$\prod_{i\in I}M_{i}=\Delta_{(I\cup \{a\})^{c}}+\Delta_{(I\cup \{\overline{a}\})^{c}}$]
\label{relation3}
Assume that the subset $I\subset V_{2n}$ satisfies that $|I|=n$ and the property $(\ast)$. 
Then, because $|V_{2n}|=2n+2$, there exists the unique pair $\{a,\overline{a}\}\subset I^{c}=V_{2n}\setminus I$ such that
\begin{align*}
\Delta_{K},\quad \Delta_{L}\in H^{2n}(\mathcal{GQ}_{2n})
\end{align*}
for 
$K=(I\cup \{a\})^{c}=I^{c}\setminus \{a\}$ and $L=(I\cup \{\overline{a}\})^{c}=I^{c}\setminus \{\overline{a}\}$.
Then, the following relation holds (see Figure~\ref{fig_rel3} for $n=2$ and Figure~\ref{fig_rel3-ver2} for $n=3$).
%%%%%%%%%%%%%%%%%%%%%%%%%%%%%%%%%%%%%%%%%%%%%%%%%%%%%%%%%%%%%%%%%%%%%%%%%%%
% Lemma 4.3
%%%%%%%%%%%%%%%%%%%%%%%%%%%%%%%%%%%%%%%%%%%%%%%%%%%%%%%%%%%%%%%%%%%%%%%%%%%
\begin{lemma}[Relation 3]
\label{lem-for-rel3}
For every $I\subset V_{2n}$ as above, %such that $|I|=n$ and $\{i,\overline{i}\}\not\subset I$ for every $i\in V_{2n}$.Then, for the unique pair $\{a,\overline{a}\}\subset I^{c}$, 
there is the following relation:
\begin{align}
\label{3rd-rel}
\prod_{i\in I} M_{i}=\Delta_{(I\cup \{a\})^{c}}+\Delta_{(I\cup \{\overline{a}\})^{c}}.
\end{align} 
\end{lemma}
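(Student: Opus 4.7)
The plan is to verify the equation vertex-by-vertex in $V_{2n}$, since both sides are elements of $H^{2n}(\mathcal{Q}_{2n})$, i.e., functions $V_{2n}\to H^{2n}(BT^{n+1})$. First I would unpack the combinatorics: by property $(\ast)$ and $|I|=n$, the set $I$ meets exactly $n$ of the $n+1$ pairs $\{i,\overline{i}\}$, so $I^{c}$ consists of the unique untouched pair $\{a,\overline{a}\}$ together with one representative from each of the $n$ pairs hit by $I$. Thus $K=I^{c}\setminus\{a\}$ and $L=I^{c}\setminus\{\overline{a}\}$ each have $n+1$ elements and satisfy $(\ast)$, so $\Delta_{K},\Delta_{L}\in H^{2n}(\mathcal{Q}_{2n})$ as claimed.

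The easy cases are $j\in I$ and $j\in\{a,\overline{a}\}$. If $j\in I$, then $M_{j}(j)=0$ forces the LHS to vanish, while $K,L\subset I^{c}$ forces $\Delta_{K}(j)=\Delta_{L}(j)=0$. If $j=a$ (the case $j=\overline{a}$ is symmetric), then $a\notin K$ gives $\Delta_{K}(a)=0$, and one computes $V_{2n}\setminus(L\cup\{\overline{a}\})=I$, whence $\Delta_{L}(a)=\prod_{i\in I}(f(i)-f(a))$; on the other side, since neither $a$ nor $\overline{a}$ lies in $I$, each $M_{i}(a)$ is the generic value $f(i)-f(a)$, so the LHS is the same product.

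The substantive case is $j\in K\cap L=I^{c}\setminus\{a,\overline{a}\}$, for which $\overline{j}\in I$. Here the LHS splits as
\[
M_{\overline{j}}(j)\cdot\prod_{i\in I\setminus\{\overline{j}\}}(f(i)-f(j)),
\]
and Definition~\ref{deg-2-gen} gives $M_{\overline{j}}(j)=x_{n}-x_{n+1}-2f(j)$. For the RHS, the complements work out to $V_{2n}\setminus(K\cup\{\overline{j}\})=(I\setminus\{\overline{j}\})\cup\{a\}$ and analogously with $\overline{a}$ for $L$, so
\[
\Delta_{K}(j)+\Delta_{L}(j)=\bigl[(f(a)-f(j))+(f(\overline{a})-f(j))\bigr]\prod_{i\in I\setminus\{\overline{j}\}}(f(i)-f(j)).
\]
The bracketed factor collapses to $x_{n}-x_{n+1}-2f(j)$ by Lemma~\ref{easy-equation1}, matching $M_{\overline{j}}(j)$.

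The main obstacle is bookkeeping rather than depth: one must carefully track whether $\overline{j}$ is an element of $K$ (or $L$) in each case in order to write down the correct index set for the defining product of $\Delta_{K}$, and in the generic case one must recognize that the sum of the two extra boundary factors collapses precisely to $M_{\overline{j}}(j)$. This is the one place where the quadric-specific pairing $v\leftrightarrow\overline{v}$ with $f(v)+f(\overline{v})=x_{n}-x_{n+1}$ does the essential work; everywhere else the proof is routine case analysis.
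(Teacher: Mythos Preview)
Your proposal is correct and follows essentially the same approach as the paper: both verify the identity vertex-by-vertex, splitting into the cases $j\in I$, $j\in\{a,\overline{a}\}$, and $j\in I^{c}\setminus\{a,\overline{a}\}$, and in the last case both use the pairing identity $f(a)+f(\overline{a})=x_{n}-x_{n+1}$ (equivalently Lemma~\ref{well-definedness-of-M}) to match $M_{\overline{j}}(j)$ with the sum of the two extra axial factors. The only cosmetic difference is that the paper writes everything in terms of $\alpha(vi)$ while you unfold to $f(i)-f(j)$.
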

\begin{proof}
If $v\in I$, then  
$\prod_{i\in I} M_{i}(v)=0=\Delta_{(I\cup \{a\})^{c}}(v)+\Delta_{(I\cup \{\overline{a}\})^{c}}(v).$
If $v\in I^{c}\setminus \{a,\overline{a}\}$, 
then $\overline{v}\in I$.
Thus, we have
\begin{align*}
\prod_{i\in I} M_{i}(v)=%(x_{n}-x_{n+1}-2f(v))
M_{\overline{v}}(v)\prod_{i\in I\setminus\{\overline{v}\}}\alpha(vi)
&=(\alpha(va)+\alpha(v\overline{a}))\prod_{i\in I\setminus\{\overline{v}\}}\alpha(vi) \\
%&=\alpha(va)\prod_{i\in I\setminus\{\overline{v}\}}\alpha(vi)+\alpha(v\overline{a})\prod_{i\in I\setminus\{\overline{v}\}}\alpha(vi) \\
&=\Delta_{(I\cup \{a\})^{c}}(v)+\Delta_{(I\cup \{\overline{a}\})^{c}}(v).
\end{align*}
For the vertex $a(\not\in I)$, we have 
\begin{align*}
\Delta_{(I\cup \{a\})^{c}}(a)+\Delta_{(I\cup \{\overline{a}\})^{c}}(a)=0+\prod_{i\in I}\alpha(ai)=\prod_{i\in I} M_{i}(a). %=\prod_{i\in I}\alpha(ai)
\end{align*}
Similarly, we have $\prod_{i\in I} M_{i}(\overline{a})=\Delta_{(I\cup \{a\})^{c}}(\overline{a})+\Delta_{(I\cup \{\overline{a}\})^{c}}(\overline{a})$.
%For the vertices $a, \overline{a}\not\in I$, we have 
%\begin{align*}
%\prod_{i\in I} M_{i}(a)=\prod_{i\in I}\alpha(ai), \quad
%\prod_{i\in I} M_{i}(\overline{a})=\prod_{i\in I}\alpha(\overline{a}i). 
%\end{align*}
%Moreover, we have 
%\begin{align*}
%\Delta_{(I\cup \{a\})^{c}}(a)=0,\quad \Delta_{(I\cup \{\overline{a}\})^{c}}(a)=\prod_{i\in I}\alpha(ai),\quad {\rm and}\quad  
%\Delta_{(I\cup \{a\})^{c}}(\overline{a})=\prod_{i\in I}\alpha(\overline{a}i),\quad \Delta_{(I\cup \{\overline{a}\})^{c}}(\overline{a})=0.
%\end{align*} 
%Therefore, for $v=a,\overline{a}$, we get the equation \eqref{3rd-rel}, i.e., 
%\begin{align*}
%\prod_{i\in I} M_{i}(v)=\Delta_{(I\cup \{a\})^{c}}(v)+\Delta_{(I\cup \{\overline{a}\})^{c}}(v).
%\end{align*} 
This establishes the statement.
\end{proof}
\end{relation}

%%%%%%%%%%%%%%%%%%%%%%%%%%%%%%%%%%%%%%%%%%%%%%%%%%%%%%%%%%%%%%%%%%%%%%%%%%%
% Figure 9
%%%%%%%%%%%%%%%%%%%%%%%%%%%%%%%%%%%%%%%%%%%%%%%%%%%%%%%%%%%%%%%%%%%%%%%%%%%
\begin{figure}[H]
\begin{tikzpicture}
\begin{scope}[xscale=0.25, yscale=0.25]
\fill(0,4) circle (5pt);
\fill(-4,2) circle (5pt);
\fill(-4,-2) circle (5pt);
\fill(0,-4) circle (5pt);
\fill(4,-2) circle (5pt);
\fill(4,2) circle (5pt);

\draw[very thick] (0,4)--(-4,2);
\draw[very thick] (0,4)--(-4,-2);
\draw[very thick] (0,4)--(4,-2);
\draw[dashed] (0,4)--(4,2);
\draw[very thick] (-4,2)--(-4,-2);
\draw[very thick] (-4,2)--(0,-4);
\draw[dashed] (-4,2)--(4,2);
\draw[very thick] (-4,-2)--(4,-2);
\draw[very thick] (-4,-2)--(0,-4);
\draw[dashed] (4,2)--(4,-2);
\draw[dashed] (4,2)--(0,-4);
\draw[very thick] (4,-2)--(0,-4);

\draw[->] (-4,2)--(-1,2);
\draw[->] (0,4)--(2,3);
\draw[->] (4,-2)--(4,0);
\draw[->] (0,-4)--(2,-1);

\node at (6,0) {$\times$};

\fill(12,4) circle (5pt);
\fill(8,2) circle (5pt);
\fill(8,-2) circle (5pt);
\fill(12,-4) circle (5pt);
\fill(16,-2) circle (5pt);
\fill(16,2) circle (5pt);

\draw[dashed] (12,4)--(8,2);
\draw[dashed] (12,4)--(8,-2);
\draw[dashed] (12,4)--(16,-2);
\draw[dashed] (12,4)--(16,2);
\draw[very thick] (8,2)--(8,-2);
\draw[very thick] (8,2)--(12,-4);
\draw[very thick] (8,2)--(16,2);
\draw[very thick] (8,-2)--(16,-2);
\draw[very thick] (8,-2)--(12,-4);
\draw[very thick] (16,2)--(16,-2);
\draw[very thick] (16,2)--(12,-4);
\draw[very thick] (16,-2)--(12,-4);

\draw[->] (8,2)--(10,3);
\draw[->] (8,-2)--(10,1);
\draw[->] (16,-2)--(14,1);
\draw[->] (16,2)--(14,3);

\node[above] at (12,4) {$M_{1}$};
\node[above] at (0,4) {$M_{4}$};

\node at (18,0) {$=$};

%-- +24

\fill(24,4) circle (5pt);
\fill(20,2) circle (5pt);
\fill(20,-2) circle (5pt);
\fill(24,-4) circle (5pt);
\fill(28,-2) circle (5pt);
\fill(28,2) circle (5pt);

\draw[dashed] (24,4)--(20,2);
\draw[dashed] (24,4)--(20,-2);
\draw[dashed] (24,4)--(28,-2);
\draw[dashed] (24,4)--(28,2);
\draw[very thick] (20,2)--(20,-2);
\draw[very thick] (20,2)--(24,-4);
\draw[dashed] (20,2)--(28,2);
\draw[dashed] (20,-2)--(28,-2);
\draw[very thick] (20,-2)--(24,-4);
\draw[dashed] (28,2)--(28,-2);
\draw[dashed] (28,2)--(24,-4);
\draw[dashed] (28,-2)--(24,-4);

\draw[->] (20,2)--(22,2);
\draw[->] (20,2)--(22,3);
\draw[->] (20,-2)--(22,-2);
\draw[->] (20,-2)--(21,-0.5);
\draw[->] (24,-4)--(25,-2.5);
\draw[->] (24,-4)--(26,-3);

%\node[left] at (20,2) {$2$};
%\node[left] at (20,-2) {$3$};
%\node[below] at (24,-4) {$6$};
\node[above] at (24,4) {$\Delta_{\{2,3,6\}}$};

\node at (30,0) {$+$};

\fill(36,4) circle (5pt);
\fill(32,2) circle (5pt);
\fill(32,-2) circle (5pt);
\fill(36,-4) circle (5pt);
\fill(40,-2) circle (5pt);
\fill(40,2) circle (5pt);

\draw[dashed] (36,4)--(32,2);
\draw[dashed] (36,4)--(32,-2);
\draw[dashed] (36,4)--(40,-2);
\draw[dashed] (36,4)--(40,2);
\draw[dashed] (32,2)--(32,-2);
\draw[dashed] (32,2)--(36,-4);
\draw[dashed] (32,2)--(40,2);
\draw[very thick] (32,-2)--(40,-2);
\draw[very thick] (32,-2)--(36,-4);
\draw[dashed] (40,2)--(40,-2);
\draw[dashed] (40,2)--(36,-4);
\draw[very thick] (40,-2)--(36,-4);

\draw[->] (32,-2)--(32,0);
\draw[->] (32,-2)--(33,-0.5);
\draw[->] (40,-2)--(39,-0.5);
\draw[->] (40,-2)--(40,0);
\draw[->] (36,-4)--(35,-2.5);
\draw[->] (36,-4)--(37,-2.5);

%\node[below] at (32,-2) {$3$};
%\node[below] at (36,-4) {$6$};
%\node[below] at (40,-2) {$5$};
\node[above] at (36,4) {$\Delta_{\{3,5,6\}}$};
\end{scope}
\end{tikzpicture}
\caption{For the GKM graph $\mathcal{GQ}_{4}$ (where the vertices are defined in Figure~\ref{2-examples}), 
this represents the following equation (Relation~\ref{relation3}): 
\begin{align*}
M_{4}\cdot M_{1}=\Delta_{\{2,3,6\}}+\Delta_{\{3,5,6\}},
\end{align*} 
where $I=\{1,4\}\subset V_{4}$ (for $n=2$).}
%In this case, $I^{c}=\{2,3,5,6\}$ and $a=2, \overline{a}=5$. 
%Moreover, $(I\cup \{5\})^{c}=\{2,3, 6\}$ and $(I\cup \{2\})^{c}=\{3,5, 6\}$.}
\label{fig_rel3}
\end{figure}

%%%%%%%%%%%%%%%%%%%%%%%%%%%%%%%%%%%%%%%%%%%%%%%%%%%%%%%%%%%%%%%%%%%%%%%%%%%
% Figure 10
%%%%%%%%%%%%%%%%%%%%%%%%%%%%%%%%%%%%%%%%%%%%%%%%%%%%%%%%%%%%%%%%%%%%%%%%%%%

\begin{figure}[H]
\begin{tikzpicture}
\begin{scope}[xscale=0.25, yscale=0.25]
\coordinate (1) at (-1.5,4);
\coordinate (2) at (-4,1.5);
\coordinate (3) at (-4,-1.5);
\coordinate (4) at (-1.5,-4);
\coordinate (5) at (1.5,4);
\coordinate (6) at (4,1.5);
\coordinate (7) at (4,-1.5);
\coordinate (8) at (1.5,-4);

\node[above] at (0,4) {$M_{1}$}; 

\fill(1) circle (5pt);
\fill(2) circle (5pt);
\fill(3) circle (5pt);
\fill(4) circle (5pt);
\fill(5) circle (5pt);
\fill(6) circle (5pt);
\fill(7) circle (5pt);
\fill(8) circle (5pt);

\draw[dashed] (1)--(2);
\draw[dashed] (1)--(3);
\draw[dashed] (1)--(4);
\draw[dashed] (1)--(5);
\draw[dashed] (1)--(6);
\draw[dashed] (1)--(7);
\draw[very thick] (2)--(3);
\draw[very thick] (2)--(4);
\draw[very thick] (2)--(5);
\draw[very thick] (2)--(6);
\draw[very thick] (2)--(8);
\draw[very thick] (3)--(4);
\draw[very thick] (3)--(5);
\draw[very thick] (3)--(7);
\draw[very thick] (3)--(8);
\draw[very thick] (4)--(6);
\draw[very thick] (4)--(7);
\draw[very thick] (4)--(8);
\draw[very thick] (5)--(6);
\draw[very thick] (5)--(7);
\draw[very thick] (5)--(8);
\draw[very thick] (6)--(7);
\draw[very thick] (6)--(8);
\draw[very thick] (7)--(8);

\node at (6,0) {$\times$};

\coordinate (a1) at (10.5,4);
\coordinate (a2) at (8,1.5);
\coordinate (a3) at (8,-1.5);
\coordinate (a4) at (10.5,-4);
\coordinate (a5) at (13.5,4);
\coordinate (a6) at (16,1.5);
\coordinate (a7) at (16,-1.5);
\coordinate (a8) at (13.5,-4);

\node[above] at (12,4) {$M_{2}$};

\fill(a1) circle (5pt);
\fill(a2) circle (5pt);
\fill(a3) circle (5pt);
\fill(a4) circle (5pt);
\fill(a5) circle (5pt);
\fill(a6) circle (5pt);
\fill(a7) circle (5pt);
\fill(a8) circle (5pt);

\draw[dashed] (a1)--(a2);
\draw[very thick] (a1)--(a3);
\draw[very thick] (a1)--(a4);
\draw[very thick] (a1)--(a5);
\draw[very thick] (a1)--(a6);
\draw[very thick] (a1)--(a7);
\draw[dashed] (a2)--(a3);
\draw[dashed] (a2)--(a4);
\draw[dashed] (a2)--(a5);
\draw[dashed] (a2)--(a6);
\draw[dashed] (a2)--(a8);
\draw[very thick] (a3)--(a4);
\draw[very thick] (a3)--(a5);
\draw[very thick] (a3)--(a7);
\draw[very thick] (a3)--(a8);
\draw[very thick] (a4)--(a6);
\draw[very thick] (a4)--(a7);
\draw[very thick] (a4)--(a8);
\draw[very thick] (a5)--(a6);
\draw[very thick] (a5)--(a7);
\draw[very thick] (a5)--(a8);
\draw[very thick] (a6)--(a7);
\draw[very thick] (a6)--(a8);
\draw[very thick] (a7)--(a8);

\node at (18,0) {$\times$};

\coordinate (b1) at (22.5,4);
\coordinate (b2) at (20,1.5);
\coordinate (b3) at (20,-1.5);
\coordinate (b4) at (22.5,-4);
\coordinate (b5) at (25.5,4);
\coordinate (b6) at (28,1.5);
\coordinate (b7) at (28,-1.5);
\coordinate (b8) at (25.5,-4);

\node[above] at (24,4) {$M_{3}$};

\fill(b1) circle (5pt);
\fill(b2) circle (5pt);
\fill(b3) circle (5pt);
\fill(b4) circle (5pt);
\fill(b5) circle (5pt);
\fill(b6) circle (5pt);
\fill(b7) circle (5pt);
\fill(b8) circle (5pt);

\draw[very thick] (b1)--(b2);
\draw[dashed] (b1)--(b3);
\draw[very thick] (b1)--(b4);
\draw[very thick] (b1)--(b5);
\draw[very thick] (b1)--(b6);
\draw[very thick] (b1)--(b7);
\draw[dashed] (b2)--(b3);
\draw[very thick] (b2)--(b4);
\draw[very thick] (b2)--(b5);
\draw[very thick] (b2)--(b6);
\draw[very thick] (b2)--(b8);
\draw[dashed] (b3)--(b4);
\draw[dashed] (b3)--(b5);
\draw[dashed] (b3)--(b7);
\draw[dashed] (b3)--(b8);
\draw[very thick] (b4)--(b6);
\draw[very thick] (b4)--(b7);
\draw[very thick] (b4)--(b8);
\draw[very thick] (b5)--(b6);
\draw[very thick] (b5)--(b7);
\draw[very thick] (b5)--(b8);
\draw[very thick] (b6)--(b7);
\draw[very thick] (b6)--(b8);
\draw[very thick] (b7)--(b8);

\node at (30,0) {$=$};

\coordinate (c1) at (34.5,4);
\coordinate (c2) at (32,1.5);
\coordinate (c3) at (32,-1.5);
\coordinate (c4) at (34.5,-4);
\coordinate (c5) at (37.5,4);
\coordinate (c6) at (40,1.5);
\coordinate (c7) at (40,-1.5);
\coordinate (c8) at (37.5,-4);

\node[above] at (36,4) {$\Delta_{\{5,6,7,8\}}$};

\fill(c1) circle (5pt);
\fill(c2) circle (5pt);
\fill(c3) circle (5pt);
\fill(c4) circle (5pt);
\fill(c5) circle (5pt);
\fill(c6) circle (5pt);
\fill(c7) circle (5pt);
\fill(c8) circle (5pt);

\draw[dashed] (c1)--(c2);
\draw[dashed] (c1)--(c3);
\draw[dashed] (c1)--(c4);
\draw[dashed] (c1)--(c5);
\draw[dashed] (c1)--(c6);
\draw[dashed] (c1)--(c7);
\draw[dashed] (c2)--(c3);
\draw[dashed] (c2)--(c4);
\draw[dashed] (c2)--(c5);
\draw[dashed] (c2)--(c6);
\draw[dashed] (c2)--(c8);
\draw[dashed] (c3)--(c4);
\draw[dashed] (c3)--(c5);
\draw[dashed] (c3)--(c7);
\draw[dashed] (c3)--(c8);
\draw[dashed] (c4)--(c6);
\draw[dashed] (c4)--(c7);
\draw[dashed] (c4)--(c8);
\draw[very thick] (c5)--(c6);
\draw[very thick] (c5)--(c7);
\draw[very thick] (c5)--(c8);
\draw[very thick] (c6)--(c7);
\draw[very thick] (c6)--(c8);
\draw[very thick] (c7)--(c8);

\node at (42,0) {$+$};

\coordinate (d1) at (46.5,4);
\coordinate (d2) at (44,1.5);
\coordinate (d3) at (44,-1.5);
\coordinate (d4) at (46.5,-4);
\coordinate (d5) at (49.5,4);
\coordinate (d6) at (52,1.5);
\coordinate (d7) at (52,-1.5);
\coordinate (d8) at (49.5,-4);

\node[above] at (48,4) {$\Delta_{\{4,6,7,8\}}$};

\fill(d1) circle (5pt);
\fill(d2) circle (5pt);
\fill(d3) circle (5pt);
\fill(d4) circle (5pt);
\fill(d5) circle (5pt);
\fill(d6) circle (5pt);
\fill(d7) circle (5pt);
\fill(d8) circle (5pt);

\draw[dashed] (d1)--(d2);
\draw[dashed] (d1)--(d3);
\draw[dashed] (d1)--(d4);
\draw[dashed] (d1)--(d5);
\draw[dashed] (d1)--(d6);
\draw[dashed] (d1)--(d7);
\draw[dashed] (d2)--(d3);
\draw[dashed] (d2)--(d4);
\draw[dashed] (d2)--(d5);
\draw[dashed] (d2)--(d6);
\draw[dashed] (d2)--(d8);
\draw[dashed] (d3)--(d4);
\draw[dashed] (d3)--(d5);
\draw[dashed] (d3)--(d7);
\draw[dashed] (d3)--(d8);
\draw[very thick] (d4)--(d6);
\draw[very thick] (d4)--(d7);
\draw[very thick] (d4)--(d8);
\draw[dashed] (d5)--(d6);
\draw[dashed] (d5)--(d7);
\draw[dashed] (d5)--(d8);
\draw[very thick] (d6)--(d7);
\draw[very thick] (d6)--(d8);
\draw[very thick] (d7)--(d8);
\end{scope}
\end{tikzpicture}
\caption{
For the GKM graph $\mathcal{GQ}_{6}$ (where the vertices are defined in Figure~\ref{2-examples}), 
this represents the following equation (Relation~\ref{relation3}): 
\begin{align*}
M_{1}\cdot M_{2}\cdot M_{3}=\Delta_{\{5,6,7,8\}}+\Delta_{\{4,6,7,8\}},
\end{align*} 
where $I=\{1,2,3\}\subset V_{6}$ (for $n=3$).}
%In this case, $I^{c}=\{4,5,6,7,8\}$ and $a=4, \overline{a}=5$. 
%Moreover, $(I\cup \{4\})^{c}=\{5,6,7,8\}$ and $(I\cup \{5\})^{c}=\{4,6,7,8\}$.}
\label{fig_rel3-ver2}
\end{figure}

%%%%%%%%%%%%%%%%%%%%%%%%%%%%%%%%%%%%%%%%%%%%%%%%%%%%%%%%%%%%%%%%%%%%%%%%%%%
% Relation 4
%%%%%%%%%%%%%%%%%%%%%%%%%%%%%%%%%%%%%%%%%%%%%%%%%%%%%%%%%%%%%%%%%%%%%%%%%%%
\begin{relation}[$\Delta_{K}\cdot M_{i}=\Delta_{K\setminus \{i\}}$]
\label{relation4}
%Let $I=V_{2n}\setminus \{i\}$ for some $i\in V_{2n}$ and $K\subset V_{2n}$ be a subset such that if $v\in K$, then $\overline{v}\not\in K$, i.e., the generator $\Delta_{K}$ can be defined.
%Assume that $K\not\subset I$ and $K\cap I\not=\emptyset$ (equivalently $\{i\}\subsetneq K$). Then, the following relation holds:
For two generators $\Delta_{K}$ and $M_{i}$, we have the following relation (see Figure~\ref{fig_rel4}).
%%%%%%%%%%%%%%%%%%%%%%%%%%%%%%%%%%%%%%%%%%%%%%%%%%%%%%%%%%%%%%%%%%%%%%%%%%%
% Lemma 4.4
%%%%%%%%%%%%%%%%%%%%%%%%%%%%%%%%%%%%%%%%%%%%%%%%%%%%%%%%%%%%%%%%%%%%%%%%%%%
\begin{lemma}[Relation 4]
\label{lem-for-rel4}
Fix $i\in V_{2n}$.
If a subset $K\subset V_{2n}$ satisfies $\{i\}\subsetneq K$ and the property $(\ast)$,  %can define the generator $\Delta_{K}$ (equivalently if $v\in K$, then $\overline{v}\not\in K$). 
then there is the following relation:
\begin{align}
\label{4th-rel}
\Delta_{K}\cdot M_{i}=\Delta_{K\setminus \{i\}}.
\end{align}
\end{lemma}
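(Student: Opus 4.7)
The plan is to verify the identity $\Delta_K \cdot M_i = \Delta_{K \setminus \{i\}}$ pointwise on $V_{2n}$, by splitting into cases according to the position of a vertex $v$ relative to $K$ and to $\{i,\overline{i}\}$. Since property $(\ast)$ together with $i \in K$ forces $\overline{i} \notin K$, every $v \in K$ satisfies $v \neq \overline{i}$, which will be used throughout. Note also that $K \setminus \{i\}$ automatically inherits property $(\ast)$ and is non-empty by the hypothesis $\{i\} \subsetneq K$, so $\Delta_{K \setminus \{i\}}$ is well defined.

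The easy cases dispose of most vertices. If $v \notin K$, then $\Delta_K(v)=0$, and since $K \setminus \{i\} \subset K$ we also have $\Delta_{K\setminus\{i\}}(v)=0$; this covers in particular $v = \overline{i}$. If $v = i$, then $M_i(i)=0$ by Definition~\ref{deg-2-gen}, while $i \notin K\setminus\{i\}$ gives $\Delta_{K\setminus\{i\}}(i)=0$. So both sides vanish in these cases.

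The substantive case is $v \in K \setminus \{i\}$. Here $v \neq i$ and $v \neq \overline{i}$, so Definition~\ref{deg-2-gen} gives $M_i(v) = \alpha(v i)$. By Definition~\ref{higher_deg_generators},
\[
\Delta_K(v) \cdot M_i(v) = \alpha(vi)\prod_{k \notin K \cup \{\overline{v}\}} \alpha(vk).
\]
On the other hand, since $\bar{v} \notin K$ (by property $(\ast)$ applied to $K$) and $i \neq \overline{v}$ (otherwise $v = \overline{i} \notin K$, a contradiction), the complement appearing in $\Delta_{K\setminus\{i\}}(v)$ decomposes as
\[
V_{2n}\setminus\bigl((K\setminus\{i\}) \cup \{\overline{v}\}\bigr) \;=\; \{i\} \sqcup \bigl(V_{2n}\setminus(K\cup\{\overline{v}\})\bigr),
\]
which yields
\[
\Delta_{K\setminus\{i\}}(v) \;=\; \alpha(vi)\prod_{k \notin K \cup \{\overline{v}\}} \alpha(vk).
\]
Comparing the two expressions finishes this case and hence the lemma.

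The argument is really just bookkeeping on products indexed by complements of subsets of $V_{2n}$, so no deep obstacle appears. The only point requiring care is the set-theoretic identity for the index set in the case $v \in K\setminus\{i\}$; specifically, one must verify that $i$ is a new element of the complement (i.e.\ $i \notin K\setminus\{i\}$ and $i \neq \overline{v}$) while $\overline{v}$ is correctly excluded, which is precisely where property $(\ast)$ is used for both $K$ and the implicit configuration of $v,\overline{v},i,\overline{i}$.
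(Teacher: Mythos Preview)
Your proof is correct and follows essentially the same pointwise verification as the paper: both arguments observe that $\Delta_K \cdot M_i$ vanishes off $K\setminus\{i\}$ and then match the product of axial functions on $K\setminus\{i\}$ to $\Delta_{K\setminus\{i\}}$. Your version simply makes the set-theoretic identity $V_{2n}\setminus((K\setminus\{i\})\cup\{\overline v\}) = \{i\}\sqcup(V_{2n}\setminus(K\cup\{\overline v\}))$ and its justification via property~$(\ast)$ more explicit than the paper does.
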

\begin{proof}
%Since $K$ satisfies $(\ast)$, its subset $K\setminus \{i\}\subset K$ also satisfies $(\ast)$ and defines $\Delta_{K\setminus \{i\}}$.
%Since $\{i\}\subsetneq K$, we have $\Delta_{K}(\overline{i})=0$.
%Therefore, 
The multiplication of $\Delta_{K}$ and $M_{i}$ is not zero only on $K\cap (V_{2n}\setminus \{i\})=K\setminus \{i\}$. 
Therefore, we have the following equations.
\begin{align*}
\Delta_{K}\cdot M_{i}(v)
=
\left\{
\begin{array}{ll}
{\displaystyle \prod_{j\not\in (K\setminus \{i\})\cup \{\overline{v}\} }\alpha(vj)} & \text{if $v\in K\setminus \{i\}$} \\
0 & \text{if $v\not\in K\setminus \{i\}$}
\end{array}
\right.
\end{align*}
%\begin{itemize}
%\item for the vertices in $K\cap I$, the multiplication of axial functions of the normal edges of the full subgraph $\Gamma_{K\cap I}$;
%\item for the vertices $V_{2n}\setminus K\cap I$, $0$.
%\end{itemize}
This shows the equation $\Delta_{K}\cdot M_{i}=\Delta_{K\setminus \{i\}}$.
\end{proof}
\end{relation}

%%%%%%%%%%%%%%%%%%%%%%%%%%%%%%%%%%%%%%%%%%%%%%%%%%%%%%%%%%%%%%%%%%%%%%%%%%%
% Figure 11
%%%%%%%%%%%%%%%%%%%%%%%%%%%%%%%%%%%%%%%%%%%%%%%%%%%%%%%%%%%%%%%%%%%%%%%%%%%
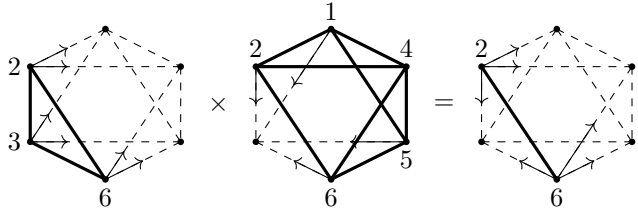
\begin{figure}[H]
\begin{tikzpicture}
\begin{scope}[xscale=0.25, yscale=0.25]
\fill(0,4) circle (5pt);
\fill(-4,2) circle (5pt);
\fill(-4,-2) circle (5pt);
\fill(0,-4) circle (5pt);
\fill(4,-2) circle (5pt);
\fill(4,2) circle (5pt);

\draw[dashed] (0,4)--(-4,2);
\draw[dashed] (0,4)--(-4,-2);
\draw[dashed] (0,4)--(4,-2);
\draw[dashed] (0,4)--(4,2);
\draw[very thick] (-4,2)--(-4,-2);
\draw[very thick] (-4,2)--(0,-4);
\draw[dashed] (-4,2)--(4,2);
\draw[dashed] (-4,-2)--(4,-2);
\draw[very thick] (-4,-2)--(0,-4);
\draw[dashed] (4,2)--(4,-2);
\draw[dashed] (4,2)--(0,-4);
\draw[dashed] (4,-2)--(0,-4);

\draw[->] (-4,2)--(-2,2);
\draw[->] (-4,2)--(-2,3);
\draw[->] (-4,-2)--(-2,-2);
\draw[->] (-4,-2)--(-3,-0.5);
\draw[->] (0,-4)--(1,-2.5);
\draw[->] (0,-4)--(2,-3);

\node[left] at (-4,2) {$2$};
\node[left] at (-4,-2) {$3$};
\node[below] at (0,-4) {$6$};

\node at (6,0) {$\times$};

\fill(12,4) circle (5pt);
\fill(8,2) circle (5pt);
\fill(8,-2) circle (5pt);
\fill(12,-4) circle (5pt);
\fill(16,-2) circle (5pt);
\fill(16,2) circle (5pt);

\draw[very thick] (12,4)--(8,2);
\draw[dashed] (12,4)--(8,-2);
\draw[very thick] (12,4)--(16,-2);
\draw[very thick] (12,4)--(16,2);
\draw[dashed] (8,2)--(8,-2);
\draw[very thick] (8,2)--(12,-4);
\draw[very thick] (8,2)--(16,2);
\draw[dashed] (8,-2)--(16,-2);
\draw[dashed] (8,-2)--(12,-4);
\draw[very thick] (16,2)--(16,-2);
\draw[very thick] (16,2)--(12,-4);
\draw[very thick] (16,-2)--(12,-4);

\draw[->] (12,4)--(10,1);
\draw[->] (8,2)--(8,0);
\draw[->] (12,-4)--(10,-3);
\draw[->] (16,-2)--(13,-2);

\node[above] at (12,4) {$1$};
\node[above] at (8,2) {$2$};
\node[below] at (12,-4) {$6$};
\node[above] at (16,2) {$4$};
\node[below] at (16,-2) {$5$};

\node at (18,0) {$=$};

\fill(24,4) circle (5pt);
\fill(20,2) circle (5pt);
\fill(20,-2) circle (5pt);
\fill(24,-4) circle (5pt);
\fill(28,-2) circle (5pt);
\fill(28,2) circle (5pt);

\draw[dashed] (24,4)--(20,2);
\draw[dashed] (24,4)--(20,-2);
\draw[dashed] (24,4)--(28,-2);
\draw[dashed] (24,4)--(28,2);
\draw[dashed] (20,2)--(20,-2);
\draw[very thick] (20,2)--(24,-4);
\draw[dashed] (20,2)--(28,2);
\draw[dashed] (20,-2)--(28,-2);
\draw[dashed] (20,-2)--(24,-4);
\draw[dashed] (28,2)--(28,-2);
\draw[dashed] (28,2)--(24,-4);
\draw[dashed] (28,-2)--(24,-4);

\node[below] at (24,-4) {$6$};
\node[above] at (20,2) {$2$}; 

\draw[->] (20,2)--(22,2);
\draw[->] (20,2)--(20,0);
\draw[->] (20,2)--(22,3);

\draw[->] (24,-4)--(22,-3);
\draw[->] (24,-4)--(26,-3);
\draw[->] (24,-4)--(26,-1);

\end{scope}
\end{tikzpicture}
\caption{$\Delta_{\{2,3,6\}}\cdot M_{3}=\Delta_{\{2,6\}}$ (Relation~\ref{relation4}), where $i=3$ and 
$K=\{2,3,6\}$.} %for $n=2$.}
\label{fig_rel4}
\end{figure}

%%%%%%%%%%%%%%%%%%%%%%%%%%%%%%%%%%%%%%%%%%%%%%%%%%%%%%%%%%%%%%%%%%%%%%%%%%%
% Section 5
%%%%%%%%%%%%%%%%%%%%%%%%%%%%%%%%%%%%%%%%%%%%%%%%%%%%%%%%%%%%%%%%%%%%%%%%%%%
\section{Main theorem and its proof}
\label{sect:5}
In this section, we prove the main theorem (Theorem~\ref{main-theorem2}). 
To state the main theorem precisely, we first prepare some notations.
We denote the set of elements defined in Section~\ref{sect:3} as follows:
\begin{description}
\item[Generator 1] $\mathcal{M}:=\{M_{v}\ |\ v\in V_{2n}\}$;
\item[Generator 2] $\mathcal{D}:=\{\Delta_{K}\ |\ K\subset V_{2n}\ \text{with the property $(\ast)$}\}$.
\end{description}
Let $\mathbb{Z}[\mathcal{M}, \mathcal{D}]$ be the polynomial ring which generated by all elements in $\mathcal{M}$ and $\mathcal{D}$. 
We define the degree of elements by  
\begin{itemize}
\item $\deg M_{v}=2$ for every $M_{v}\in \mathcal{M}$; 
\item $\deg \Delta_{K}=2(2n-(|K|-1))=4n-2|K|+2$ for every $\Delta_{K}\in \mathcal{D}$.
\end{itemize}
Let $\mathcal{I}$ be the ideal in $\mathbb{Z}[\mathcal{M}, \mathcal{D}]$ generated by the four relations defined in Section~\ref{sect:4}.
Namely, 
the ideal $\mathcal{I}$ in $\mathbb{Z}[\mathcal{M}, \mathcal{D}]$ is generated by 
the following four types of elements:
\begin{description}
\item[Relation 1] $\prod_{\cap J=\emptyset}G_{J}$ for $G_{J}\in \mathcal{M}\sqcup \mathcal{D}$;
\item[Relation 2] $(M_{i}+M_{\overline{i}})-(M_{j}+M_{\overline{j}})$ for every distinct $i,j\in V_{2n}$;
\item[Relation 3] $\prod_{i\in I}M_{i}-(\Delta_{(I\cup \{a\})^{c}}+\Delta_{(I\cup \{\overline{a}\})^{c}})$ for every subset $I\subset V_{2n}$ which satisfies the property $(\ast)$ and $|I|=n$, where 
$\{a,\overline{a}\}$ is the unique pair in $V_{2n}\setminus I$;
\item[Relation 4] $\Delta_{K}\cdot M_{i}-\Delta_{K\setminus \{i\}}$ for $\{i\}\subsetneq K$.  
\end{description}
We use the following notation: %ring a {\it face ring} of $\mathcal{GQ}_{2n}$: 
\begin{align}
\label{Face-ring}
\mathbb{Z}[\mathcal{GQ}_{2n}]:=\mathbb{Z}[\mathcal{M},\mathcal{D}]/\mathcal{I}.
\end{align}
Because of Section~\ref{sect:3} and Section~\ref{sect:4}, there exists the well-defined homomorphism 
\begin{align*}
\psi:\mathbb{Z}[\mathcal{GQ}_{2n}]\to H^{*}(\mathcal{GQ}_{2n})
\end{align*}
by the induced homomorphism from 
\begin{align*}
\widetilde{\psi}:\mathbb{Z}[\mathcal{M},\mathcal{D}]\to H^{*}(\mathcal{GQ}_{2n}).
\end{align*}
Namely, 
$\psi$ is induced from 
the following commutative diagram:
\begin{align}
\label{diagram}
\xymatrix{
\mathbb{Z}[\mathcal{M},\mathcal{D}] \ar[d] \ar[rd]^{\widetilde{\psi}} & \\
\mathbb{Z}[\mathcal{GQ}_{2n}]\ar[r]^{\psi} & H^{*}(\mathcal{GQ}_{2n})}
\end{align}
where the vertical map is the natural projection.

The following theorem is the main theorem of this paper. 
%%%%%%%%%%%%%%%%%%%%%%%%%%%%%%%%%%%%%%%%%%%%%%%%%%%%%%%%%%%%%%%%%%%%%%%%%%%
% Theorem 5.1
%%%%%%%%%%%%%%%%%%%%%%%%%%%%%%%%%%%%%%%%%%%%%%%%%%%%%%%%%%%%%%%%%%%%%%%%%%%
\begin{theorem}
\label{main-theorem2}
The homomorphism $\psi$ is the isomorphism, i.e., 
\begin{align*}
\mathbb{Z}[\mathcal{GQ}_{2n}]\simeq H^{*}(\mathcal{GQ}_{2n}).
\end{align*} 
\end{theorem}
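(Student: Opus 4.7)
The plan is to prove Theorem~\ref{main-theorem2} by establishing surjectivity and injectivity of $\psi$ separately, combining an induction on the support of elements of $H^*(\mathcal{Q}_{2n})$ with a rank count over $H^*(BT^{n+1})$. Lemma~\ref{key-lemma} together with equivariant formality tells us the target is a free $H^*(BT^{n+1})$-module of rank $|V_{2n}|=2n+2$, so once surjectivity is known, injectivity reduces to exhibiting a generating set for $\mathbb{Z}[\mathcal{Q}_{2n}]$ of the same rank over $H^*(BT^{n+1})$ and checking its image is $H^*(BT^{n+1})$-linearly independent via fixed-point evaluation. Proposition~\ref{polynomial generators} already places $\iota(H^*(BT^{n+1}))$ inside the image, so throughout I can work $H^*(BT^{n+1})$-linearly.

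For surjectivity, I would induct on the ``support'' $\mathrm{supp}(h):=\{v\in V_{2n}:h(v)\neq 0\}$, ordered by reverse inclusion. The crucial observation is that the congruence relations at edges $vk$ for $k\notin K$ force, whenever $h$ vanishes on $V_{2n}\setminus K$ for a subset $K$ satisfying property $(\ast)$, each value $h(v)$, $v\in K$, to be divisible by $\prod_{k\notin K\cup\{\overline{v}\}}\alpha(vk)=\Delta_K(v)$. Then $h=\Delta_K\cdot g+h'$ where $g\in H^*(BT^{n+1})$ at each $v\in K$ and $h'$ has strictly smaller support; the inductive hypothesis finishes. The genuinely new wrinkle, absent in the torus-graph case of \cite{MMP}, is when $\mathrm{supp}(h)$ contains a complementary pair $\{v,\overline{v}\}$ so no single $\Delta_K$ covers it. Here I would first subtract off $\iota$-coefficient multiples of $M_v$ and $M_{\overline{v}}$, exploiting Relation~\ref{relation2} to control how modifying the value at $v$ simultaneously affects the value at $\overline{v}$, in order to reduce to a support contained in one half (no antipodal pairs), after which the previous paragraph applies.

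For injectivity, I would use the four relations to put every element of $\mathbb{Z}[\mathcal{Q}_{2n}]$ into a normal form. Relation~\ref{relation4} rewrites any product $\Delta_K\cdot M_i$ with $i\in K$ as $\Delta_{K\setminus\{i\}}$, so any monomial that contains at least one $\Delta_K$ collapses to a single $\Delta_{K'}$ times a polynomial in $M_v$'s with $v\notin K'$. Relation~\ref{relation1} then kills those residual monomials whose vertex-indices have empty common intersection, while Relation~\ref{relation3} expresses degree-$2n$ products $\prod_{i\in I}M_i$ (for $|I|=n$ with $(\ast)$) as sums of two $\Delta_K$'s of degree $2n$, and Relation~\ref{relation2} identifies the elements $M_v+M_{\overline{v}}$ pairwise. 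A careful bookkeeping yields an explicit $H^*(BT^{n+1})$-generating set of $\mathbb{Z}[\mathcal{Q}_{2n}]$ of cardinality $\leq 2n+2$, which must match the rank of $H^*(\mathcal{Q}_{2n})$. The main obstacle I foresee is executing this reduction to normal form rigorously: the interaction of Relation~\ref{relation3} (which exchanges products of $M_v$'s with pairs of $\Delta_K$'s) and Relation~\ref{relation2} (antipodal identifications) is subtle, and one must show that no further hidden relations among the $\Delta_K$'s of degree $2n$ emerge. This is precisely where the specific combinatorics of $\mathcal{Q}_{2n}$ — the balanced vertex pairing $v\leftrightarrow\overline{v}$ and the count $|V_{2n}|=2n+2$ — must be used, and where I expect most of the work to lie.
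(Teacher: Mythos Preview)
Your overall architecture (surjectivity by an inductive peeling argument, then injectivity by a normal-form/rank count) matches the paper's. But your surjectivity step has a concrete gap in the antipodal case. You propose to ``subtract off $\iota$-coefficient multiples of $M_v$ and $M_{\overline v}$'' to shrink a support containing $\{v,\overline v\}$; this does not work, because $M_v$ is nonzero on all of $V_{2n}\setminus\{v\}$, so subtracting a multiple of it will in general \emph{enlarge} the support rather than shrink it, and Relation~2 does not repair this. The paper avoids the problem by fixing the linear order $1,2,\ldots,2n+2$ and using the products $M_1M_2\cdots M_i$ (whose zero locus is exactly $\{1,\ldots,i\}$) to kill the first $n+1$ vertices one at a time; only after that does it switch to the Thom classes $\Delta_{\{n+2,\ldots,2n+2\}},\Delta_{\{n+3,\ldots,2n+2\}},\ldots,\Delta_{\{2n+2\}}$. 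This produces the explicit normal form $f=g_1+g_2M_1+\cdots+g_{n+1}M_1\cdots M_n+g_{n+2}\Delta_{\{n+2,\ldots,2n+2\}}+\cdots+g_{2n+2}\Delta_{\{2n+2\}}$, which is exactly the $2n+2$-term expression you are looking for. Your support induction can be salvaged, but only by replacing the single $M_v$'s with such products.

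For injectivity you diverge more substantially from the paper. You want to invoke equivariant formality (via Lemma~\ref{key-lemma}) to know $H^*(\mathcal{Q}_{2n})$ is free of rank $2n+2$ over $H^*(BT^{n+1})$, then match this against a $\leq 2n+2$-element $H^*(BT^{n+1})$-generating set of $\mathbb{Z}[\mathcal{Q}_{2n}]$. That is a valid shortcut, but note it imports geometry into what is otherwise a purely combinatorial statement about the GKM graph. The paper instead proves a combinatorial localization lemma: for each vertex $v$ one has $\mathbb{Z}[\mathcal{Q}_{2n}]/\langle G_J:v\notin J\rangle\simeq\mathbb{Z}[M_i:i\in I_v]\simeq H^*(BT^{n+1})$, established directly from Relations~1--4. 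With this in hand the paper writes an arbitrary $f\in\ker\psi$ in the normal form above and evaluates at the vertices $1,2,\ldots,2n+2$ in order, using that each $\mathbb{Z}[M_i:i\in I_v]$ is an integral domain to kill the coefficients $g_j$ one by one. Either route works, but the ``careful bookkeeping'' you flag as the hard part is precisely the content of this localization lemma, and you should expect to prove something equivalent to it.
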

Together with Lemma~\ref{key-lemma}, we obtain Theorem~\ref{main}.

Note that in the proofs below,
by definitions of generators in Section~\ref{sect:3}, 
we may write $\widetilde{\psi}(M_{i})=M_{i}, \widetilde{\psi}(\Delta_{K})=\Delta_{K}\in H^{*}(\mathcal{GQ}_{2n})$.

%%%%%%%%%%%%%%%%%%%%%%%%%%%%%%%%%%%%%%%%%%%%%%%%%%%%%%%%%%%%%%%%%%%%%%%%%%%
% Section 5.1
%%%%%%%%%%%%%%%%%%%%%%%%%%%%%%%%%%%%%%%%%%%%%%%%%%%%%%%%%%%%%%%%%%%%%%%%%%%
\subsection{Surjectivity of $\psi:\mathbb{Z}[\mathcal{GQ}_{2n}]\to H^{*}(\mathcal{GQ}_{2n})$}
\label{sect:5.1}
We first prove the surjectivity of $\psi$.
To prove it, we use the inductive argument for vertices which is often used in GKM theory (see e.g. \cite[Lemma 4.4]{MMP} or \cite[Lemma 5.6]{KU}).
%%%%%%%%%%%%%%%%%%%%%%%%%%%%%%%%%%%%%%%%%%%%%%%%%%%%%%%%%%%%%%%%%%%%%%%%%%%
% Lemma 5.2
%%%%%%%%%%%%%%%%%%%%%%%%%%%%%%%%%%%%%%%%%%%%%%%%%%%%%%%%%%%%%%%%%%%%%%%%%%%
\begin{lemma}
\label{surjectivity}
The homomorphism 
$\psi:\mathbb{Z}[\mathcal{GQ}_{2n}]\to H^{*}(\mathcal{GQ}_{2n})$
is surjective.
\end{lemma}
\begin{proof}
By the commutative diagram \eqref{diagram}, 
it is enough to prove that $\widetilde{\psi}$ is surjective.
Take an element $f\in H^{*}(\mathcal{GQ}_{2n})$. 
By definition, for the vertex $1\in V_{2n}$, the polynomial $f(1)\in H^{*}(BT^{n+1})$ can be written by
\begin{align*}
f(1)=\sum_{\textbf{a}}k_{\textbf{a}}x^{\textbf{a}}=g_{1}
\end{align*}
where $k_{\textbf{a}}\in \mathbb{Z}$ and $x_{\textbf{a}}:=x_{1}^{a_{1}}\cdots x_{n+1}^{a_{n+1}}$ for $\textbf{a}=(a_{1},\cdots ,a_{n+1})\in (\mathbb{N}\cup \{0\})^{n+1}$.
By definition of $M_{2},\ldots, M_{n+2}$ in Definition~\ref{deg-2-gen},
we have $M_{2}(1)=x_{1},\ldots, M_{n+2}(1)=x_{n+1}$; therefore, 
\begin{align*}
x^{\textbf{a}}=x_{1}^{a_{1}}\cdots x_{n+1}^{a_{n+1}}=M_{2}^{a_{1}}\cdots M_{n+1}^{a_{n}}M_{n+2}^{a_{n+1}}(1).
\end{align*}
This means that we may take an element from $\mathbb{Z}[M_{2},\ldots, M_{n+2}]\subset \mathbb{Z}[\mathcal{M},\mathcal{D}]$ whose image of $\widetilde{\psi}$ coincides with $f(1)$ on the vertex $1\in V$.

We next put 
\begin{align*}
f_{2}=f-g_{1}. %=f-\sum_{J_{1}}k_{J_{1}}M_{J_{1}}=f-g_{1},
\end{align*} 
%where $k_{J_{1}}\in \mathbb{Z}$ and $M_{J_{1}}:=M_{2}^{a_{1}}\cdots M_{n+1}^{a_{n}}M_{n+2}^{a_{n+1}}$ for $J_{1}=(0,a_{1},\ldots, a_{n+1})\in \{0\}\cup (\mathbb{N}\cup \{0\})^{n+1}$.
Then, $f_{2}(1)=0$.
So, 
by the congruence relations on the edge $21\in E_{2n}$, 
we have 
\begin{align*}
f_{2}(2)-f_{2}(1)\equiv 0 \mod \alpha(21)=M_{1}(2).
\end{align*}
Therefore, we have that 
$f_{2}(2)=g_{2}M_{1}(2)$ for some $g_{2}\in H^{*}(BT^{n+1})$.
By Proposition~\ref{polynomial generators}, 
we have that 
\begin{align*}
x_{1}=M_{2}-M_{1},\ \ldots,\ x_{n+1}=M_{n+2}-M_{1}.
\end{align*}
This implies that 
$g_{2}\in \mathbb{Z}[M_{2}-M_{1},M_{3}-M_{1},\ldots, M_{n+2}-M_{1}]\subset \mathbb{Z}[\mathcal{M},\mathcal{D}]$.
Note that it may also be regarded as $g_{2}\in \mathbb{Z}[M_{1},M_{2},M_{3},\ldots, M_{n+2}]$.
%and denote it by
%\begin{align*}
%g_{2}=\sum_{J_{2}}k_{J_{2}}M_{J_{2}},
%\end{align*}
%where $k_{J_{2}}\in \mathbb{Z}$ and $M_{J_{2}}:=M_{1}^{r_{1}}\cdots M_{n+2}^{r_{n+2}}$ for $J_{2}=(r_{1},\ldots, r_{n+2})\in (\mathbb{N}\cup\{0\})^{n+2}$.
This shows that $g_{2}M_{1}$ is in the image of $\widetilde{\psi}$.
Put 
\begin{align*}
f_{3}=f_{2}-g_{2}M_{1} (=f-g_{1}-g_{2}M_{1}).
\end{align*}
Then, by $f_{2}(1)=M_{1}(1)=0$ and $f_{2}(2)=g_{2}M_{1}(2)$, we have 
\begin{align*}
f_{3}(1)=f_{3}(2)=0.
\end{align*}
By the similar argument, we may write $f_{3}(3)=g_{3}M_{1}M_{2}(3)$ 
for some $g_{3}\in \mathbb{Z}[M_{1},M_{2},M_{3},\ldots, M_{n+2}]$.
Similarly, we can also check that 
$f_{4}:=f_{3}-g_{3}M_{1}M_{2}$ satisfies $f_{4}(1)=f_{4}(2)=f_{4}(3)=0$.
Iterating similar arguments $n+2$ times (note that $\overline{n+1}=n+2$), 
we obtain an element 
\begin{align*}
f_{n+2}:=f_{n+1}-g_{n+1}M_{1}\cdots M_{n}\in H^{*}(\mathcal{GQ}_{2n})
\end{align*}
such that $g_{n+1}\in \mathbb{Z}[M_{1},M_{2},\ldots, M_{n+2}]$ and 
 $f_{n+1}\in H^{*}(\mathcal{GQ}_{2n})$ satisfies that 
$f_{n+1}(1)=\cdots =f_{n+1}(n)=0$ and $f_{n+1}(n+1)=g_{n+1}M_{1}\cdots M_{n}(n+1)$.
Consequently, 
we have that 
\begin{align}
\label{1st_part_surj}
f_{n+2}&=f_{n+1}-g_{n+1}M_{1}\cdots M_{n} \\
&= f_{n}-(g_{n}M_{1}\cdots M_{n-1}+g_{n+1}M_{1}\cdots M_{n}) \nonumber \\
&\quad \vdots \nonumber \\
%&= f_{2}-(g_{2}M_{1}+\cdots +g_{n+1}M_{1}\cdots M_{n}) \\
&=f-(g_{1}+g_{2}M_{1}+\cdots +g_{n}M_{1}\cdots M_{n-1} +g_{n+1}M_{1}\cdots M_{n}). \nonumber
\end{align}
%Since the 2nd term of this equation is an element of $\mathbb{Z}[\mathcal{M}]\subset \mathbb{Z}[\mathcal{M},\mathcal{D}]$, by putting it as $A\in \mathbb{Z}[\mathcal{M}]\subset \mathbb{Z}[\mathcal{M},\mathcal{D}]$, we may write  
%\begin{align*}
%f_{n+2}=f-\widetilde{\psi}(A)\in H^{*}(\mathcal{GQ}_{2n}).
%\end{align*}
%Moreover, the $f_{n+2}\in H^{*}(\mathcal{GQ}_{2n})$ satisfies $f_{n+2}(1)=\cdots =f_{n+2}(n+1)=0$.

Note that $f_{n+2}$ satisfies that $f_{n+2}(1)=\cdots=f_{n+2}(n+1)=0$.
Therefore, by the definition of $\Delta_{\{n+2,\ldots, 2n+2\}}$ and the congruence relation (see \eqref{graph-equivariant-cohomology}), there exists an element $g_{n+2}\in \mathbb{Z}[M_{1},M_{2},\ldots, M_{n+2}]$ such that  
\begin{align*}
f_{n+2}(n+2)=g_{n+2}\Delta_{\{n+2,\ldots, 2n+2\}}(n+2).
\end{align*}
Since $\Delta_{\{n+2,\ldots, 2n+2\}}(1)=\cdots =\Delta_{\{n+2,\ldots, 2n+2\}}(n+1)=0$, if we put $f_{n+3}:=f_{n+2}-g_{n+2}\Delta_{\{n+2,\ldots, 2n+2\}}$, then 
\begin{align*}
f_{n+3}(1)=\cdots =f_{n+3}(n+2)=0.
\end{align*}
Similarly, for $k\ge 2$, 
there exists $g_{n+k}\in \mathbb{Z}[M_{1},M_{2},\ldots, M_{n+2}]$ such that 
\begin{align}
\label{2nd_part_surj}
f_{n+k+1}:=f_{n+k}-g_{n+k}\Delta_{\{n+k,\ldots, 2n+2\}} 
\end{align}
and 
$f_{n+k+1}(1)=\cdots =f_{n+k+1}(n+k)=0$. 
Then, in the case when $k=n+2$, 
there exists $g_{2n+2}\in \mathbb{Z}[M_{1},M_{2},\ldots, M_{n+2}]$ such that 
\begin{align*}
f_{2n+3}:=f_{2n+2}-g_{2n+2}\Delta_{\{2n+2\}} 
\end{align*}
and $f_{2n+3}(v)=0$ for all $v\in V_{2n}$.
Therefore, $f_{2n+2}=g_{2n+2}\Delta_{\{2n+2\}}$. 
Substituting this equation to \eqref{2nd_part_surj} for $k=n+1$, we get $f_{2n+1}$; and iterating this argument from $k=n$ to $k=2$, 
we have 
\begin{align*}
f_{2n+1}&=g_{2n+2}\Delta_{\{2n+2\}}+g_{2n+1}\Delta_{\{2n+1, 2n+2\}}; \\ 
f_{2n}&=g_{2n+2}\Delta_{\{2n+2\}}+g_{2n+1}\Delta_{\{2n+1, 2n+2\}}+g_{2n}\Delta_{\{2n,2n+1, 2n+2\}}; \\
&\quad \vdots \\
f_{n+2}&=g_{2n+2}\Delta_{\{2n+2\}}+g_{2n+1}\Delta_{\{2n+1, 2n+2\}}+\cdots + g_{n+2}\Delta_{\{n+2,\ldots, 2n+2\}}.
\end{align*} 
Together with \eqref{1st_part_surj}, every element  
$f\in H^{*}(\mathcal{GQ}_{2n})$ can be written by the elements in $\mathbb{Z}[\mathcal{M},\mathcal{D}]$ as follows:
\begin{align}
\label{linear-relation}
f=g_{1}+g_{2}M_{1}+\cdots &+g_{n+1}M_{1}\cdots M_{n} \\
&+g_{n+2}\Delta_{\{n+2,\ldots, 2n+2\}}+g_{n+3}\Delta_{\{n+3,\ldots, 2n+2\}}+\cdots 
+g_{2n+2}\Delta_{\{2n+2\}} \nonumber
\end{align}
for some 
$g_{1}\in \mathbb{Z}[M_{2},\ldots, M_{n+2}]$ and 
$g_{2},\ldots, g_{2n+2}\in \mathbb{Z}[x_{1},\ldots, x_{n+1}]\simeq \mathbb{Z}[M_{2}-M_{1},\ldots, M_{n+2}-M_{1}]\subset \mathbb{Z}[M_{1},\ldots, M_{n+2}]\subset \mathbb{Z}[\mathcal{M},\mathcal{D}]$.
This shows the subjectivity of $\widetilde{\psi}:\mathbb{Z}[\mathcal{M},\mathcal{D}]\to H^{*}(\mathcal{GQ}_{2n})$.
%Because the homomorphism $\psi:\mathbb{Z}[\mathcal{M},\mathcal{D}]/\mathcal{I}\to H^{*}(\mathcal{GQ}_{2n})$ is the induced homomorphism from $\widetilde{\psi}$, we establish the statement of this lemma.
\end{proof}

%%%%%%%%%%%%%%%%%%%%%%%%%%%%%%%%%%%%%%%%%%%%%%%%%%%%%%%%%%%%%%%%%%%%%%%%%%%
% Section 5.2
%%%%%%%%%%%%%%%%%%%%%%%%%%%%%%%%%%%%%%%%%%%%%%%%%%%%%%%%%%%%%%%%%%%%%%%%%%%
\subsection{Injectivity of $\psi:\mathbb{Z}[\mathcal{GQ}_{2n}]\to H^{*}(\mathcal{GQ}_{2n})$}
\label{sect:5.2}

We next prove the injectivity of $\psi$.
%%%%%%%%%%%%%%%%%%%%%%%%%%%%%%%%%%%%%%%%%%%%%%%%%%%%%%%%%%%%%%%%%%%%%%%%%%%
% Lemma 5.3
%%%%%%%%%%%%%%%%%%%%%%%%%%%%%%%%%%%%%%%%%%%%%%%%%%%%%%%%%%%%%%%%%%%%%%%%%%%
\begin{lemma}
\label{injectivity}
The homomorphism 
$\psi:\mathbb{Z}[\mathcal{GQ}_{2n}]\to H^{*}(\mathcal{GQ}_{2n})$
is injective.
\end{lemma}

To show this lemma, we will use the combinatorial counterpart of the localization theorem which will be stated in Corollary~\ref{localization_theorem}.
To state that, we prepare the following notation.
For $v\in V_{2n}=[2n+2]$, the subset $I_{v}\subset [n+2]\subset V_{2n}$ is defined by 
\begin{itemize}
\item $I_{v}=[n+2]\setminus \{v\}$ for $1\le v\le n+1$;
\item $I_{v}=[n+2]\setminus \{\overline{v}\}$ for $n+2\le v\le 2n+2$.
\end{itemize} 
Note that $I_{v}=I_{2n+3-v}$ for $1\le v\le n+1$, for example, $I_{1}=\{2,\ldots, n+2\}=I_{2n+2}$.

%%%%%%%%%%%%%%%%%%%%%%%%%%%%%%%%%%%%%%%%%%%%%%%%%%%%%%%%%%%%%%%%%%%%%%%%%%%
% Lemma 5.4
%%%%%%%%%%%%%%%%%%%%%%%%%%%%%%%%%%%%%%%%%%%%%%%%%%%%%%%%%%%%%%%%%%%%%%%%%%%
\begin{lemma}
\label{localization_1st}
%Let $J\subset V_{2n}$ and $G_{J}$ be the element defined in \eqref{def_G}.
The following isomorphism holds for every $v\in V_{2n}$:
\begin{align*}
\mathbb{Z}[\mathcal{GQ}_{2n}]/\langle G_{J}\ |\ v\not\in J \rangle\simeq\mathbb{Z}[M_{i}\ |\ i\in I_{v}]
\simeq H^{*}(BT^{n+1}),
\end{align*}
where $\langle G_{J}\ |\ v\not\in J \rangle$ is the ideal in $\mathbb{Z}[\mathcal{GQ}_{2n}]$ generated by $G_{J}$ with $v\not\in J$ (see Relation~\ref{relation1} in Section~\ref{sect:4}).
\end{lemma}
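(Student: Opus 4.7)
The plan is to exhibit the isomorphism through the evaluation-at-$v$ map. Define $\mathrm{ev}_v : H^{*}(\mathcal{Q}_{2n}) \to H^{*}(BT^{n+1})$ by $h \mapsto h(v)$, and let $\phi := \mathrm{ev}_v \circ \psi : \mathbb{Z}[\mathcal{Q}_{2n}] \to H^{*}(BT^{n+1})$, using $\psi$ of \eqref{diagram}. Inspecting Definitions \ref{deg-2-gen} and \ref{higher_deg_generators} shows that $\phi(G_J) = G_J(v)$ vanishes exactly when $v \notin J$: $M_w(v) = 0$ iff $w = v$ (equivalently $v \notin V_{2n} \setminus \{w\}$), and $\Delta_K(v) = 0$ iff $v \notin K$. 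Hence $\phi$ kills the ideal $\langle G_J \mid v \notin J\rangle$ and descends to a ring homomorphism $\bar\phi : \mathbb{Z}[\mathcal{Q}_{2n}]/\langle G_J \mid v \notin J\rangle \to H^{*}(BT^{n+1})$. The task is to show $\bar\phi$ is an isomorphism.

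For surjectivity, I would compute $\bar\phi(M_i) = M_i(v) = f(i) - f(v)$ for $i \in I_v$ (with the special formula of Definition \ref{deg-2-gen} handling the case $i = \overline{v}$). Using the explicit description of $f$ from Definition \ref{effective_axial_function}, this family is a $\mathbb{Z}$-basis of $H^{2}(BT^{n+1})$; for instance when $v = 1$ one finds $M_i(1) = x_{i-1}$ for $i = 2,\ldots,n+2$. Since $H^{*}(BT^{n+1})$ is polynomial on its degree-$2$ part, $\bar\phi$ is surjective.

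For injectivity, the strategy is to build a surjection $\pi : \mathbb{Z}[M_i \mid i \in I_v] \twoheadrightarrow \mathbb{Z}[\mathcal{Q}_{2n}]/\langle G_J \mid v \notin J\rangle$ and then to observe that the composition $\bar\phi \circ \pi$ sends the algebraically independent generators $M_i$ to a $\mathbb{Z}$-basis of $H^{2}(BT^{n+1})$, making it an isomorphism of polynomial rings on $n+1$ generators and forcing both $\pi$ and $\bar\phi$ to be isomorphisms. Producing $\pi$ reduces to showing the quotient is generated as a ring by $\{M_i : i \in I_v\}$, and this splits into two reductions. First, any $M_w$ with $w \notin I_v$ is expressed as a $\mathbb{Z}$-combination of generators in $I_v$ via Relation \ref{relation2}, which gives $M_w = M_j + M_{\overline{j}} - M_{\overline{w}}$ for any $j$; combined with $M_v = 0$ and a well-chosen $j$ (e.g.\ $j = n+1$), every such $M_w$ reduces. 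Second, every $\Delta_K$ with $v \in K$ is realized as a product of $M_i$'s: take $I \subset V_{2n}\setminus\{v,\overline{v}\}$ with $|I| = n$ and property $(\ast)$, so that $\{v,\overline{v}\}$ is the unique twin pair in $V_{2n}\setminus I$; Relation \ref{relation3} then reads $\prod_{i \in I} M_i = \Delta_{(I\cup\{v\})^c} + \Delta_{(I\cup\{\overline{v}\})^c}$, and the first summand is killed in the quotient since $v \notin (I\cup\{v\})^c$, leaving $\Delta_{\{v\}\cup\{\overline{i}:i\in I\}} = \prod_{i \in I} M_i$. As $I$ varies, this realizes every size-$(n+1)$ index set $K$ containing $v$ satisfying $(\ast)$; for smaller $K'$ with $v \in K'\subsetneq K$, Relation \ref{relation4} iterates to give $\Delta_{K'} = \Delta_K \cdot \prod_{i \in K\setminus K'} M_i$.

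The main obstacle lies in the combinatorial bookkeeping of the $\Delta$-reduction: one must verify that every admissible $K'$ (with $v \in K'$ and $(\ast)$) embeds into some $K_0 = (I\cup\{\overline{v}\})^c$ of the right form so that Relation \ref{relation4} applies in descent. This is a routine extension argument, since property $(\ast)$ allows us to pick one representative from each of the $n+1$ twin pairs of $V_{2n}$ (taking $v$ in its own pair), but the subsequent bookkeeping — tracking which $M_i$'s appear and then using the first reduction to push each outside $I_v$ back in — requires careful handling. Once both reductions are completed, the quotient is manifestly a surjective image of $\mathbb{Z}[M_i \mid i \in I_v]$, and the factorization through $\bar\phi$ closes the argument.
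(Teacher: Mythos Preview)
Your proposal is correct and follows essentially the same strategy as the paper: construct a surjection $\pi$ (the paper's $p$) from the polynomial ring $\mathbb{Z}[M_i\mid i\in I_v]$ onto the quotient by reducing all generators via Relations~\ref{relation2}--\ref{relation4}, then observe that the composition with evaluation at $v$ (the paper's $\iota_1$) is an isomorphism of polynomial rings, forcing both maps to be bijections. The only cosmetic difference is in the $\Delta$-reduction: the paper first uses Relation~\ref{relation4} to enlarge every $\Delta_K$ to size $n+1$, then uses Relation~\ref{relation3} iteratively to normalize all such classes to the single $\Delta_{\{1,\ldots,n+1\}}$ before converting it to a product of $M_i$'s, whereas you convert each size-$(n+1)$ class $\Delta_K$ directly to $\prod_{i\in K^c\setminus\{\overline v\}}M_i$ via one application of Relation~\ref{relation3} and then descend via Relation~\ref{relation4}; both routes are valid and of comparable length.
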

\begin{proof}
We will prove the statement only for the vertex $v=1\in V_{2n}$ because the proofs for the other vertices in $V_{2n}=[2n+2]=\{1,\ldots, 2n+2\}$ are similar. 

Suppose that $v=1\in V_{2n}$. 
We shall prove that 
\begin{align*}
\mathbb{Z}[\mathcal{GQ}_{2n}]/\langle G_{J}\ |\ 1\not\in J \rangle\simeq 
\mathbb{Z}[M_{2},\ldots, M_{n+1},M_{n+2}]\simeq H^{*}(BT^{n+1}).
\end{align*}

We first claim that every element in $\mathcal{D}$ can be written by the elements in $\mathcal{M}$ in $\mathbb{Z}[\mathcal{GQ}_{2n}]/\langle G_{J}\ |\ 1\not\in J \rangle$.
Assume that $K\subset V_{2n}$ satisfies $\{i, \overline{i}\}\not\subset K$ for every $i=1,\ldots, n+1$.
If $1\not\in K$, then $\Delta_{K}=0$ in $\mathbb{Z}[\mathcal{GQ}_{2n}]/\langle G_{J}\ |\ 1\not\in J \rangle$.
If $1\in K$ and $|K|< n+1$, then 
by Relation 4, we have that 
\begin{align*}
\Delta_{K}=\Delta_{K\cup \{j\}}\cdot M_{j}\ \text{for $j, \overline{j}\not\in K$}.
\end{align*}
This implies that in $\mathbb{Z}[\mathcal{GQ}_{2n}]/\langle G_{J}\ |\ 1\not\in J \rangle$ the generators in $\mathcal{D}$ can be written by 
$\Delta_{K}$'s 
such that $1\in K$ and $|K|=n+1$.
We next assume that $I\subset V_{2n}$ satisfies $|I|=n$ 
such that $1\not\in I$ and 
there is the unique pair $\{a,\overline{a}\}\subset V_{2n}\setminus I$.
Put $I=\{j_{1},\ldots, j_{n}\}$ and $I^{c}=V_{2n}\setminus I=\{1, i_{1},\ldots, i_{n-1}, a, \overline{a}\}$.
Then, by Relation 3, we have 
\begin{align*}
\Delta_{\{1,i_{1},\ldots, i_{n-1}, \overline{a}\}}=M_{j_{1}}\cdots M_{j_{n}}-\Delta_{\{1,i_{1},\ldots, i_{n-1}, a\}}\in \mathbb{Z}[\mathcal{GQ}_{2n}]/\langle G_{J}\ |\ 1\not\in J \rangle.
\end{align*}
This shows that for the generator $\Delta_{K}\in \mathcal{D}$ such that $1\in K$ and $|K|=n+1$, if there is the vertex $\overline{a}\in K$ for $a=2,\ldots n+1$, then we may replace $\Delta_{K}$ into $\Delta_{(K\setminus \{\overline{a}\})\cup \{a\}}$ by using elements in $\mathcal{M}$. 
Therefore, we may reduce the generators in $\mathcal{D}$ into only one generator 
$\Delta_{\{1,\ldots, n+1\}}$. 
Moreover, 
since $1\not\in \{2,\ldots, n+1,2n+2\}$, we have 
\begin{align*}
M_{n+2}\cdots M_{2n+1}=\Delta_{\{1,2,\ldots, n+1\}}+\Delta_{\{2,\ldots, n+1,2n+2\}}
=\Delta_{\{1,2,\ldots, n+1\}}\in \mathbb{Z}[\mathcal{GQ}_{2n}]/\langle G_{J}\ |\ 1\not\in J \rangle,
\end{align*}
by Relation 3. 
This shows that every element in $\mathcal{D}$ can be written by the elements in $\mathcal{M}$.

Next, 
by the definition of $M_{1}$, we have $M_{1}=G_{V_{2n}\setminus\{1\}}=0$ 
in $\mathbb{Z}[\mathcal{GQ}_{2n}]/\langle G_{J}\ |\ 1\not\in J \rangle$.
Therefore, together with Relation 2, we have that 
\begin{align*}
M_{\overline{1}}=M_{2}+M_{\overline{2}}=\cdots =M_{n+1}+M_{\overline{n+1}}\in \mathbb{Z}[\mathcal{GQ}_{2n}]/\langle G_{J}\ |\ 1\not\in J \rangle.
\end{align*}
Therefore, $M_{\overline{1}}=M_{n+1}+M_{n+2}$ and $M_{\overline{k}}=M_{n+1}+M_{n+2}-M_{k}$ for $k=2,\ldots, n$.
This implies that the generators in $\mathcal{M}$ can be reduced into 
\begin{align*}
M_{2}, \ldots, M_{n+1}, M_{n+2}. 
\end{align*}
This shows that there is the surjective homomorphism 
\begin{align*}
p:\mathbb{Z}[M_{2},\ldots, M_{n+2}]\to \mathbb{Z}[\mathcal{GQ}_{2n}]/\langle G_{I}\ |\ 1\not\in I \rangle
\end{align*}
defined by $p(M_{i})=M_{i}$ for $i=2,\ldots, n+2$.
We finally consider the following composition homomorphism:
\begin{align*}
\mathbb{Z}[M_{2},\ldots, M_{n+2}]\stackrel{p}{\to} \mathbb{Z}[\mathcal{GQ}_{2n}]/\langle G_{I}\ |\ 1\not\in I \rangle 
%\stackrel{\psi}{\to} H^{*}(\mathcal{GQ}_{2n})
%\stackrel{\subset}{\longrightarrow] \bigoplus_{v\in V_{2n}}H^{*}(BT^{n+1})
\stackrel{\iota_{1}}{\to} H^{*}(BT^{n+1}),
\end{align*}
where 
$\iota_{1}$ is the induced homomorphism from 
$\mathbb{Z}[\mathcal{GQ}_{2n}]\stackrel{\psi}{\to} H^{*}(\mathcal{GQ}_{2n})\to H^{*}(BT^{n+1})$ such that $f\mapsto \psi(f)(1)$ for $f\in \mathbb{Z}[\mathcal{GQ}_{2n}]$. 
By the definition of $M_{i}$, we have $\iota_{1}\circ p(M_{i})=x_{i-1}$ for $i=2,\ldots, n+2$.
Therefore, the composition map $\iota_{1}\circ p$ is an isomorphism.
This shows that $p$ is injective.
Consequently, $p$ is an isomorphism.
%Since $1\in M_{2}, \ldots, M_{n+1}, M_{n+2}$ and there is no relations among $M_{2}, \ldots, M_{n+1}, M_{n+2}$ in $\mathbb{Z}[\mathcal{GQ}_{2n}]$, 
%we also have that $p$ is injective, i.e., isomorphism.
%Since $H^{*}(BT^{n+1})$ is a polynomial ring with $n+1$ generators, 
This establishes that  
$\mathbb{Z}[\mathcal{GQ}_{2n}]/\langle G_{I}\ |\ 1\not\in I \rangle\simeq  \mathbb{Z}[M_{2},\ldots, M_{n+2}]\simeq  H^{*}(BT^{n+1})$.
\end{proof}

%%%%%%%%%%%%%%%%%%%%%%%%%%%%%%%%%%%%%%%%%%%%%%%%%%%%%%%%%%%%%%%%%%%%%%%%%%%
% Corollary 5.5
%%%%%%%%%%%%%%%%%%%%%%%%%%%%%%%%%%%%%%%%%%%%%%%%%%%%%%%%%%%%%%%%%%%%%%%%%%%

Therefore, by the definition of the graph equivariant cohomology and Lemma~\ref{localization_1st},  we have the following corollary.
\begin{corollary}
\label{localization_theorem}
There is an injective homomorphism:
\begin{align*}
H^{*}(\mathcal{GQ}_{2n})\hookrightarrow \bigoplus_{v\in V_{2n}}H^{*}(BT^{n+1})\simeq \bigoplus_{v\in V_{2n}}\mathbb{Z}[\mathcal{GQ}_{2n}]/\langle G_{J}\ |\ v\not\in J \rangle\simeq \bigoplus_{v\in V_{2n}}\mathbb{Z}[M_{i}\ |\ i\in I_{v}].
\end{align*}
\end{corollary}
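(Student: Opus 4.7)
The plan is to obtain the corollary as an immediate packaging of Lemma~\ref{localization_1st} together with the tautological evaluation map provided by the very definition of graph equivariant cohomology. Concretely, I would introduce the map
\begin{align*}
\operatorname{ev}\colon H^{*}(\mathcal{Q}_{2n})\longrightarrow \bigoplus_{v\in V_{2n}} H^{*}(BT^{n+1}),\qquad h\longmapsto \bigl(h(v)\bigr)_{v\in V_{2n}},
\end{align*}
and then read off the two ring isomorphisms vertex by vertex from Lemma~\ref{localization_1st}.

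The key steps, in order, are as follows. First, by \eqref{graph-equivariant-cohomology}, an element $h\in H^{*}(\mathcal{Q}_{2n})$ is by definition a function $V_{2n}\to H^{*}(BT^{n+1})$, so $h(v)=0$ for all $v\in V_{2n}$ forces $h=0$; this gives the injectivity of $\operatorname{ev}$ at no cost. Second, for each fixed vertex $v\in V_{2n}$, Lemma~\ref{localization_1st} supplies ring isomorphisms
\begin{align*}
H^{*}(BT^{n+1})\;\simeq\; \mathbb{Z}[\mathcal{Q}_{2n}]\big/\langle G_{J}\mid v\not\in J\rangle\;\simeq\;\mathbb{Z}[M_{i}\mid i\in I_{v}],
\end{align*}
where the isomorphism $\mathbb{Z}[M_i\mid i\in I_v]\to H^{*}(BT^{n+1})$ is given by $M_i\mapsto M_i(v)=x_{i-1}$ (with the appropriate shift from Definition~\ref{deg-2-gen}). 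Third, I would assemble these vertex-wise isomorphisms into a ring isomorphism of the direct sums, compatible with $\operatorname{ev}$, thereby obtaining the chain of maps displayed in the corollary.

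There is no genuine obstacle here: the injectivity is tautological, and the two subsequent isomorphisms are applied vertex by vertex, depending only on the content already established in Lemma~\ref{localization_1st}. The mild bookkeeping point one must keep track of is that the identification at the vertex $v$ is not canonical across different $v$'s; it uses the subset $I_v\subset[n+2]$ and sends the generator $M_i$ to the polynomial $M_i(v)$, so the composite factorization through $\psi$ is natural in each summand but not uniform. This compatibility is exactly what will be exploited in the proof of Lemma~\ref{injectivity} to pull back relations in $H^{*}(\mathcal{Q}_{2n})$ to relations in $\mathbb{Z}[\mathcal{Q}_{2n}]$.
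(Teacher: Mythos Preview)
Your proposal is correct and matches the paper's approach exactly: the paper simply states that the corollary follows ``by the definition of the graph equivariant cohomology and Lemma~\ref{localization_1st}'', which is precisely the tautological injectivity of the evaluation map together with the vertex-by-vertex application of Lemma~\ref{localization_1st} that you spell out.
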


Notice that Corollary~\ref{localization_theorem} may be regarded as the counterpart of the localization theorem for the usual equivariant cohomology.

%%%%%%%%%%%%%%%%%%%%%%%%%%%%%%%%%%%%%%%%%%%%%%%%%%%%%%%%%%%%%%%%%%%%%%%%%%%
% Proof of Lemma 5.3
%%%%%%%%%%%%%%%%%%%%%%%%%%%%%%%%%%%%%%%%%%%%%%%%%%%%%%%%%%%%%%%%%%%%%%%%%%%

Now we may prove Lemma~\ref{injectivity}.

\begin{proof}[Proof of Lemma~\ref{injectivity}]
It is enough to prove that the following composition map $\varphi$ is injective:
\begin{align*}
\varphi:\mathbb{Z}[\mathcal{GQ}_{2n}]\stackrel{\psi}{\longrightarrow} H^{*}(\mathcal{GQ}_{2n})\hookrightarrow \bigoplus_{v\in V_{2n}}H^{*}(BT^{n+1}) 
%\simeq\bigoplus_{v\in V_{2n}}\mathbb{Z}[\mathcal{GQ}_{2n}]/\langle G_{J}\ |\ v\not\in J \rangle
\simeq \bigoplus_{v\in V_{2n}}\mathbb{Z}[M_{i}\ |\ i\in I_{v}].
\end{align*}
Assume that $\varphi(f)=0$ for an element $f\in \mathbb{Z}[\mathcal{GQ}_{2n}]$. 
We will prove that $f=0$.
In the proof, we use the following restriction map for $w\in V_{2n}$: 
\begin{align*}
\rho_{w}:%\bigoplus_{v\in V_{2n}}\mathbb{Z}[\mathcal{GQ}_{2n}]/\langle G_{J}\ |\ v\not\in J \rangle
\bigoplus_{v\in V_{2n}}\mathbb{Z}[M_{i}\ |\ i\in I_{v}]
\to \mathbb{Z}[M_{i}\ |\ i\in I_{w}]
\end{align*}
and the image of $f\in \mathbb{Z}[\mathcal{GQ}_{2n}]$ by the composition map $\rho_{w}\circ \varphi$ by $f(w)(:=\rho_{w}\circ \varphi(f))$.
The assumption $\varphi(f)=0$ is equivalent to that $\rho_{v}\circ \varphi(f)=f(v)=0\in \mathbb{Z}[M_{i}\ |\ i\in I_{v}]$ for all $v\in V_{2n}$.

In the proof of Lemma~\ref{surjectivity}, especially in Equation~\eqref{linear-relation}, we also show the following fact: for any element $f\in \mathbb{Z}[\mathcal{GQ}_{2n}]$, there exists $g_{i},g_{n}'\in %H^{\deg f-2i}(BT^{n+1})\subset 
\mathbb{Z}[M_{2}-M_{1},\ldots, M_{n+2}-M_{1}] \subset \mathbb{Z}[M_{1},\ldots, M_{n+2}]$ for $i=1,\ldots, 2n$ and $g_{0}\in \mathbb{Z}[M_{2},\ldots, M_{n+2}]$ such that  
\begin{align}
\label{induction_injective}
f&=g_{0}+g_{1}M_{1}+\cdots +g_{n}M_{1}\cdots M_{n}+g_{n}'\Delta_{\{n+2,\ldots, 2n+2\}}+g_{n+1}\Delta_{\{n+3,\ldots, 2n+2\}}+\cdots 
+g_{2n}\Delta_{\{2n+2\}} \\
&=g_{0}+\sum_{i=1}^{n}{g_{i}}M_{1}\cdots M_{i}+X(\Delta), \nonumber
\end{align}
where $X(\Delta)$ is the $\Delta_{K}$ terms.
Note that $\psi(g_{j}), \psi(g_{n}')\in \mathbb{Z}[x_{1},\ldots, x_{n+1}]$ (see \eqref{algebra-structure}) for all $j=0,\ldots, 2n$.
This implies that if there is a vertex $v\in V_{2n}$ such that $g_{j}(v)=0$ (resp.~$g_{n}'(v)=0$), then $g_{j}=0$ (resp.~$g_{n}'=0$).

We first claim that $f$ can be written by $\Delta_{K}$ terms only.
Since $M_{1}(1)=0$ and $X(\Delta)(1)=0$, 
by \eqref{induction_injective}, we have that 
\begin{align*}
g_{0}(1)=f(1)-\left(\sum_{i=1}^{n}{g_{i}}M_{1}\cdots M_{i}+X(\Delta)\right)(1)=f(1)=0.
\end{align*}
%On the other hand, $g_{0}\in \mathbb{Z}[M_{2},\ldots, M_{n+2}]\simeq \mathbb{Z}[M_{2}(1),\ldots, M_{n+2}(1)]\simeq \mathbb{Z}[x_{1},\ldots,x_{n+1}]$. 
%Therefore, we may regard $g_{0}=g_{0}(1)\in \mathbb{Z}[M_{2}(1),\ldots, M_{n+2}(1)]\simeq \mathbb{Z}[x_{1},\ldots,x_{n+1}]$
Therefore, we have $g_{0}=0$.
Similarly, by using $g_{0}=0$ and \eqref{induction_injective}, we have that 
\begin{align*}
g_{1}(2)M_{1}(2)=f(2)-\left(\sum_{i=2}^{n}{g_{i}}M_{1}\cdots M_{i}+X(\Delta)\right)(2)=0.
\end{align*}
Now $g_{1}(2), M_{1}(2)\in \mathbb{Z}[M_{i}\ |\ i\in I_{2}=\{1,3,\ldots,n+2\}]$ and $M_{1}(2)\not=0$.
Since the polynomial ring $\mathbb{Z}[M_{i}\ |\ i\in I_{2}]$ is an integral domain, we see that $g_{1}(2)=0$; therefore, $g_{1}=0$.
%Moreover, because 
%$g_{1}\in \mathbb{Z}[M_{2}-M_{1},\ldots, M_{n+2}-M_{1}]\simeq \mathbb{Z}[x_{1},\ldots, x_{n+1}]$, 
%we have that $g_{1}(2)=g_{1}=0$.
%$g_{1}=h_{1}M_{2}$ for some $h_{1}\in \mathbb{Z}[M_{1},\ldots, M_{n+2}]$.
%Similarly, we have that 
%\begin{align*}
%(h_{1}+g_{2})M_{1}M_{2}(3)=f(3)-\left(\sum_{i=3}^{n}{g_{i}}M_{1}\cdots M_{i}+X(\Delta)\right)(3)=0.
%\end{align*}
%Since $M_{1}M_{2}(3)\not=0$ and $h_{1}+g_{2}\in \mathbb{Z}[M_{1},\ldots, M_{n+2}]$, this implies that $h_{1}M_{1}M_{2}+g_{2}M_{1}M_{2}=h_{2}M_{1}M_{2}M_{3}$ for some $h_{2}\in \mathbb{Z}[M_{1},\ldots, M_{n+2}]$.
Iterating the similar arguments for $i=3,\ldots, n-2$, 
%we can prove that there exists a polynomial $h_{n-1}\in \mathbb{Z}[M_{1},\ldots, M_{n+2}]$ such that 
%\begin{align*}
%h_{n-1}\cdot M_{1}\cdots M_{n}=f-g_{n}M_{1}\cdots M_{n}-X(\Delta).
%\end{align*}
%Put $h=h_{n-1}+g_{n}$.
we also have that $g_{2}=\cdots =g_{n-1}=0$, i.e., 
\begin{align*}
f&=g_{n}M_{1}\cdots M_{n}+X(\Delta) \\
&=g_{n}(\Delta_{\{n+2,\ldots, 2n+2\}}+\Delta_{\{n+1,n+3,\ldots, 2n+2\}})+X(\Delta) \quad ({\rm by\ Relation~3}). % \\
%&=g_{n}(\Delta_{\{n+2,\ldots, 2n+2\}}+\Delta_{\{n+1,n+3,\ldots, 2n+2\}})+g_{n}'\Delta_{\{n+2,\ldots, 2n+2\}}+g_{n+1}\Delta_{\{n+3,\ldots, 2n+2\}}+\cdots +g_{2n}\Delta_{\{2n+2\}}.
\end{align*}
Therefore, if $f(v)=0$ for every $v\in V_{2n}$, then $f$ can be written by the $\Delta_{K}$ terms only; more precisely, 
\begin{align}
\label{middle-eq}
f=g_{n}\Delta_{\{n+1,n+3,\ldots, 2n+2\}}+(g_{n}+g_{n}')\Delta_{\{n+2,\ldots, 2n+2\}}+g_{n+1}\Delta_{\{n+3,\ldots, 2n+2\}}+\cdots 
+g_{2n}\Delta_{\{2n+2\}}.
\end{align}

We next claim that $f=0$ if $f(v)=0$ for every $v\in V_{2n}$.
The equality \eqref{middle-eq} implies that for the vertex $n+1\in V_{2n}$,
\begin{align*}
&g_{n}(n+1)\Delta_{\{n+1,n+3,\ldots, 2n+2\}}(n+1) \\
&=f(n+1)-\left((g_{n}+g_{n}')\Delta_{\{n+2,\ldots, 2n+2\}}+g_{n+1}\Delta_{\{n+3,\ldots, 2n+2\}}+\cdots 
+g_{2n}\Delta_{\{2n+2\}}\right)(n+1)=0.
\end{align*}
Since $\Delta_{\{n+1,n+3,\ldots, 2n+2\}}(n+1)\not=0$, by the similar reason as above, we have $g_{n}=0$.
%$h\in \mathbb{Z}[M_{1},\ldots, M_{n+2}]$, there exists $k_{1}\in \mathbb{Z}[M_{1},\ldots, M_{n+2}]$ such that $h=k_{1}M_{n+1}$.
%So we have 
%\begin{align*}
%f=g_{n}'\Delta_{\{n+2,\ldots, 2n+2\}}+g_{n+1}\Delta_{\{n+3,\ldots, 2n+2\}}+\cdots 
%+g_{2n}\Delta_{\{2n+2\}}.
%\end{align*}
Iterating the similar arguments for $i=n+2,\ldots, 2n+2$, 
we have that $g_{n}'=g_{n+1}=\cdots =g_{2n}=0$.
This establishes that $f=0$.
Consequently, $\varphi$ is injective.
\end{proof}

%%%%%%%%%%%%%%%%%%%%%%%%%%%%%%%%%%%%%%%%%%%%%%%%%%%%%%%%%%%%%%%%%%%%%%%%%%%
% Section 6
%%%%%%%%%%%%%%%%%%%%%%%%%%%%%%%%%%%%%%%%%%%%%%%%%%%%%%%%%%%%%%%%%%%%%%%%%%%
\section{Multiplicative formula of $\Delta_{K}, \Delta_{H}$ with $|K|=|H|=n+1$}
\label{sect:6}

In this section, we show some multiplicative formula in $H^{*}(\mathcal{GQ}_{2n})$ which gives a typical difference between $H^{*}(\mathcal{GQ}_{2n})$ and the graph equivariant cohomology ring of a torus graph, i.e., the face ring proved in \cite{MMP}.

Let $K, H\subset V_{2n}$ be the subsets with the property $(\ast)$ and $|K|=|H|=n+1$,
%$(n+1)$ vertices and $\{v,\overline{v}\}\not\subset K, H$ for every $v=1,\ldots, n+1$, 
i.e., there are classes $\Delta_{K}, \Delta_{H}\in H^{2n}(\mathcal{GQ}_{2n})$.
Note that if $K\cap H\not=\emptyset$, then we can also define $\Delta_{K\cap H}\in H^{4n-2k}(\mathcal{GQ}_{2n})$ for $k=|K\cap H|-1$.
If $K\cap H=\emptyset$, then we put $\Delta_{\emptyset}=0$.
Recall that the elementary symmetric polynomial with degree $j$ is defined by
\begin{align*}
\mathfrak{S}_{j}(r_{i}\ |\ i=1,\ldots, n):=\sum_{\substack{a_{1}+\cdots+a_{n}=j, \\ 0\le a_{i}\le 1}}r_{1}^{a_{1}}\cdots r_{n}^{a_{n}}.
\end{align*}
Moreover, 
because of Relation 2,  for every $v=1,\ldots, n+1$,
we may put 
\begin{align*}
X:=M_{v}+M_{\overline{v}}\in H^{2}(\mathcal{GQ}_{2n}).
\end{align*}
There is the following multiplicative formula in $H^{*}(\mathcal{GQ}_{2n})\simeq \mathbb{Z}[\mathcal{GQ}_{2n}]$ (see Figure~\ref{fig_rel5} and Figure~\ref{fig_rel5-2}).

%%%%%%%%%%%%%%%%%%%%%%%%%%%%%%%%%%%%%%%%%%%%%%%%%%%%%%%%%%%%%%%%%%%%%%%%%%%
% Theorem 6.1
%%%%%%%%%%%%%%%%%%%%%%%%%%%%%%%%%%%%%%%%%%%%%%%%%%%%%%%%%%%%%%%%%%%%%%%%%%%
\begin{theorem}
\label{rel5}
The following formula holds:
\begin{align}
\label{5th-rel}
\Delta_{K}\cdot \Delta_{H}=\Delta_{K\cap H}\cdot \left(\sum_{i=0}^{k}(-1)^{i}X^{i}\cdot \mathfrak{S}_{k-i}(M_{v}\ |\ v\not\in K\cup H) \right)\in H^{4n}(\mathcal{GQ}_{2n}),
\end{align}
where $k=|K\cap H|-1$. % and $\mathfrak{S}_{j}$ is the elementary symmetric polynomial with degree $j$.
\end{theorem}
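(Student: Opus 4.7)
The plan is to verify the identity vertex-by-vertex using the combinatorial localization map furnished by Corollary~\ref{localization_theorem}, which embeds $H^{*}(\mathcal{Q}_{2n})\hookrightarrow\bigoplus_{v\in V_{2n}}H^{*}(BT^{n+1})$. So it suffices to check, for each $v\in V_{2n}$, that the evaluations of both sides at $v$ agree as polynomials in $H^{*}(BT^{n+1})$. If $v\notin K\cap H$, then the left-hand side vanishes (since $\Delta_{K}(v)\cdot\Delta_{H}(v)=0$) and so does the right-hand side (since $\Delta_{K\cap H}(v)=0$); the only interesting case is $v\in K\cap H$.

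Fix such a $v$. The key structural observation I would record first is a cardinality count: under the property $(\ast)$ with $|K|=|H|=n+1$ and $|K\cap H|=k+1$, one has $|K\cup H|=2n+1-k$, hence $|V_{2n}\setminus(K\cup H)|=k+1$. Since each $u\in K\cap H$ forces $\overline{u}\notin K\cup H$ (by $(\ast)$ applied to both $K$ and $H$), one obtains the clean identification
\begin{equation*}
V_{2n}\setminus(K\cup H)=\{\overline{u}\mid u\in K\cap H\}.
\end{equation*}
This is the step that ties the combinatorics of the subset arithmetic to the algebraic form of the right-hand side.

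Next I would factor the left-hand side. Writing $A=V_{2n}\setminus(K\cup\{\overline{v}\})$, $B=V_{2n}\setminus(H\cup\{\overline{v}\})$, and using $A\cup B=V_{2n}\setminus((K\cap H)\cup\{\overline{v}\})$, $A\cap B=V_{2n}\setminus((K\cup H)\cup\{\overline{v}\})$, Definition~\ref{higher_deg_generators} gives
\begin{equation*}
(\Delta_{K}\cdot\Delta_{H})(v)=\Delta_{K\cap H}(v)\cdot\prod_{u\in K\cap H,\ u\neq v}\alpha(v\overline{u}).
\end{equation*}
So the whole theorem reduces to proving that the bracketed symmetric-function expression in \eqref{5th-rel}, when evaluated at $v$, equals $\prod_{u\in K\cap H,\ u\neq v}\alpha(v\overline{u})$. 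Using Definition~\ref{deg-2-gen}, I would read off the values at $v$ of the variables $\{M_{w}(v)\mid w\notin K\cup H\}=\{M_{\overline{u}}(v)\mid u\in K\cap H\}$: the special element $u=v$ gives $M_{\overline{v}}(v)=X(v)$ (by the third clause of the definition), while for the remaining $k$ indices $u\in K\cap H$ with $u\neq v$ one has $M_{\overline{u}}(v)=\alpha(v\overline{u})$.

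The remaining step is purely algebraic. Setting $y:=X(v)$ and $Y:=\{\alpha(v\overline{u})\mid u\in K\cap H,\ u\neq v\}$ (so $|Y|=k$), the right-hand side of \eqref{5th-rel} at $v$ becomes $\Delta_{K\cap H}(v)\sum_{i=0}^{k}(-1)^{i}y^{i}\mathfrak{S}_{k-i}(\{y\}\cup Y)$. Using the splitting $\mathfrak{S}_{j}(\{y\}\cup Y)=\mathfrak{S}_{j}(Y)+y\mathfrak{S}_{j-1}(Y)$ and a telescoping re-indexing, this alternating sum collapses to $\mathfrak{S}_{k}(Y)=\prod_{u\in K\cap H,\ u\neq v}\alpha(v\overline{u})$, matching the left-hand side. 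The only mildly subtle point in the whole argument is the identification $V_{2n}\setminus(K\cup H)=\{\overline{u}\mid u\in K\cap H\}$, which is what makes $X$ appear naturally as the ``extra'' variable beyond $Y$ and thereby explains why the alternating combination of $X^{i}$ with elementary symmetric polynomials is exactly the right gadget; once this is set up, the telescoping identity is a one-line cancellation.
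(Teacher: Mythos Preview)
Your proposal is correct and follows essentially the same approach as the paper's own proof: both verify the identity vertex-by-vertex, identify $V_{2n}\setminus(K\cup H)=\{\overline{u}\mid u\in K\cap H\}$ via the property $(\ast)$ and the cardinality constraint $|K|=|H|=n+1$, factor $(\Delta_{K}\cdot\Delta_{H})(v)$ through $\Delta_{K\cap H}(v)$, and then use $M_{\overline{v}}(v)=X(v)$ together with the splitting $\mathfrak{S}_{j}(\{y\}\cup Y)=\mathfrak{S}_{j}(Y)+y\,\mathfrak{S}_{j-1}(Y)$ to telescope the alternating sum down to $\mathfrak{S}_{k}(Y)$. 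The only cosmetic difference is that you invoke Corollary~\ref{localization_theorem} explicitly, whereas the paper relies directly on the definition of $H^{*}(\mathcal{Q}_{2n})$ as a subring of $\bigoplus_{v}H^{*}(BT^{n+1})$.
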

\begin{proof}
If $K\cap H=\emptyset$, then the statement follows from Relation~1 and $\Delta_{\emptyset}=0$.
So we may assume $K\cap H\not=\emptyset$.

Because $\Delta_{K}, \Delta_{H}\in H^{2n}(\mathcal{GQ}_{2n})$, their multiplication satisfies $\Delta_{K}\cdot \Delta_{H}\in H^{4n}(\mathcal{GQ}_{2n})$.
Moreover, the degree of each term on the right-hand side in \eqref{5th-rel} satisfies that
\begin{align*}
\deg \Delta_{K\cap H}+
\deg X^{i}+\deg \mathfrak{S}_{k-i}(M_{v}\ |\ v\not\in K\cup H) 
=(4n-2k)+2i+2(k-i)=2n.
\end{align*}
 
For every $p\not\in K\cap H$, because $\Delta_{K}\cdot \Delta_{H}(p)=\Delta_{K\cap H}(p)=0$, the relation \eqref{5th-rel} holds. 
 
For $p\in K\cap H$, by the definitions of $\Delta_{K}$'s and $M_{v}$'s, 
it is easy to check that 
%by using the formal element $\alpha(pi)^{-1}$ which satisfies $\alpha(pi)\cdot \alpha(pi)^{-1}=1$, we may write 
%\begin{align*}
%& \Delta_{K}(p)=\prod_{v\not\in K\cup \{\overline{p}\}}\alpha(pv)
%=\prod_{v\not\in (K\cap H)\cup \{\overline{p}\}}\alpha(pv)\cdot \prod_{v\in K\setminus (K\cap H)}\alpha(pv)^{-1}  
%=\Delta_{K\cap H}(p)\cdot \prod_{v\in K\setminus (K\cap H)}\alpha(pv)^{-1}, \\
%& \Delta_{H}(p)=\prod_{v\not\in H\cup \{\overline{p}\}}\alpha(pv)
%=\prod_{v\not\in (K\cap H)\cup \{\overline{p}\}}\alpha(pv)\cdot \prod_{v\in H\setminus (K\cap H)}\alpha(pv)^{-1} 
%=\Delta_{K\cap H}(p)\cdot \prod_{v\in H\setminus (K\cap H)}\alpha(pv)^{-1}.
%\end{align*}
%Therefore, we have 
\begin{align*}
\Delta_{K}\cdot \Delta_{H}(p)
%=&\Delta_{K\cap H}(p)\cdot \left( \Delta_{K\cap H}(p)\cdot  \prod_{v\in K\setminus (K\cap H)}\alpha(pv)^{-1}\cdot \prod_{v\in H\setminus (K\cap H)}\alpha(pv)^{-1}\right) \\
%=& \Delta_{K\cap H}(p)\cdot \prod_{v\not\in (K\cap H)\cup \{\overline{p}\}}\alpha(pv)\cdot \prod_{v\in K\setminus (K\cap H)}\alpha(pv)^{-1}\cdot \prod_{v\in H\setminus (K\cap H)}\alpha(pv)^{-1} \\
%=&\Delta_{K\cap H}(p)\cdot \prod_{v\not\in K\cup H\cup \{\overline{p}\}}\alpha(pv) \\
=\Delta_{K\cap H}(p)\cdot \prod_{v\not\in K\cup H\cup \{\overline{p}\}}M_{v}(p).
\end{align*}
Because $|K\cap H|=k+1$ for $0\le k\le n$, we may put  
\begin{align*}
K\cap H=\{a_{0},a_{1},\ldots, a_{k}\}\subset V_{2n},
\end{align*}
where we assume $p=a_{0}$.
Because $|K|=|H|=n+1=\frac{|V_{2n}|}{2}$, we also have that 
$K^{c}=\{\overline{a}\ |\ a\in K\}$ and $H^{c}=\{\overline{b}\ |\ b\in H\}$.
Therefore, 
$K^{c}\cap H^{c}=\{\overline{x}\ |\ x\in K\cap H\}=\{\overline{a_{0}},\overline{a_{1}},\ldots, \overline{a_{k}}\}$.
This shows that 
$v\not\in K\cup H\cup \{\overline{p}\}$ if and only if $v\in (K\cup H\cup \{\overline{p}\})^{c}=(K^{c}\cap H^{c})\setminus \{\overline{p}\}=\{\overline{a_{1}},\ldots, \overline{a_{k}}\}$.
Therefore, if we put $\mathcal{A}:=\{\overline{a_{1}},\ldots, \overline{a_{k}}\}$, 
\begin{align}
\label{equation-for-1st-step}
\Delta_{K}\cdot \Delta_{H}(p)=\Delta_{K\cap H}(p)\cdot \prod_{v\in \mathcal{A}}M_{v}(p).
\end{align}
On the other hand,
\begin{align}
\label{equation-for-2nd-step}
 \sum_{i=0}^{k}(-1)^{i}X^{i}\cdot \mathfrak{S}_{k-i}(M_{v}\ |\ v\not\in K\cup H)(p)  
=\sum_{i=0}^{k}(-1)^{i}X(p)^{i}\cdot \mathfrak{S}_{k-i}(M_{v}\ |\ v\in \{\overline{p}\}\cup\mathcal{A})(p). %\nonumber
\end{align}
By the definition of $M_{v}$, %and $\overline{a_{0}}=\overline{p}$, 
we have $M_{\overline{p}}(p)=X(p)$.
Therefore, for $0\le i\le k-1$, 
\begin{align*}
& \mathfrak{S}_{k-i}(M_{v}\ |\ v\in \{\overline{p}\}\cup\mathcal{A})(p) \\
&=
\mathfrak{S}_{k-i}(M_{v}\ |\ v\in \mathcal{A})(p)+
X(p)\cdot \mathfrak{S}_{k-i-1}(M_{v}\ |\ v\in \mathcal{A})(p).
\end{align*}
Substituting this into \eqref{equation-for-2nd-step}, we have 
\begin{align*}
& \sum_{i=0}^{k}(-1)^{i}X(p)^{i}\cdot \mathfrak{S}_{k-i}(M_{v}\ |\ v\in \{\overline{p}\}\cup\mathcal{A})(p) \\
%=& \sum_{i=0}^{k-1}(-1)^{i}X(p)^{i}\cdot \left(\mathfrak{S}_{k-i}(M_{v}\ |\ v\in \{\overline{a_{1}},\ldots, \overline{a_{k}}\})(p)+X(p)\cdot \sigma_{k-i-1}(M_{v}\ |\ v\in \{\overline{a_{1}},\ldots, \overline{a_{k}}\})(p)\right)+(-1)^{k}X(p)^{k} \\
=&\sum_{i=0}^{k-1}(-1)^{i}X(p)^{i}\cdot\mathfrak{S}_{k-i}(M_{v}\ |\ v\in \mathcal{A})(p)+
\sum_{i=0}^{k-1}(-1)^{i}X(p)^{i+1}\cdot \mathfrak{S}_{k-i-1}(M_{v}\ |\ v\in \mathcal{A})(p)
+(-1)^{k}X(p)^{k} \\
=&\mathfrak{S}_{k}(M_{v}\ |\ v\in \mathcal{A}\})(p) \\
=&\prod_{v\in \mathcal{A}}M_{v}(p)\quad ({\rm by}\ |\mathcal{A}|=k).
\end{align*}
Combining \eqref{equation-for-1st-step} and \eqref{equation-for-2nd-step}, we obtain \eqref{5th-rel}.
\end{proof}

%%%%%%%%%%%%%%%%%%%%%%%%%%%%%%%%%%%%%%%%%%%%%%%%%%%%%%%%%%%%%%%%%%%%%%%%%%%
% Figure 12
%%%%%%%%%%%%%%%%%%%%%%%%%%%%%%%%%%%%%%%%%%%%%%%%%%%%%%%%%%%%%%%%%%%%%%%%%%%
\begin{figure}[H]
\begin{tikzpicture}
\begin{scope}[xscale=0.25, yscale=0.25]
\fill(0,4) circle (5pt);
\fill(-4,2) circle (5pt);
\fill(-4,-2) circle (5pt);
\fill(0,-4) circle (5pt);
\fill(4,-2) circle (5pt);
\fill(4,2) circle (5pt);

\draw[dashed] (0,4)--(-4,2);
\draw[dashed] (0,4)--(-4,-2);
\draw[dashed] (0,4)--(4,-2);
\draw[dashed] (0,4)--(4,2);
\draw[very thick] (-4,2)--(-4,-2);
\draw[very thick] (-4,2)--(0,-4);
\draw[dashed] (-4,2)--(4,2);
\draw[dashed] (-4,-2)--(4,-2);
\draw[very thick] (-4,-2)--(0,-4);
\draw[dashed] (4,2)--(4,-2);
\draw[dashed] (4,2)--(0,-4);
\draw[dashed] (4,-2)--(0,-4);

\draw[->] (-4,2)--(-2,2);
\draw[->] (-4,2)--(-2,3);
\draw[->] (-4,-2)--(-2,-2);
\draw[->] (-4,-2)--(-3,-0.5);
\draw[->] (0,-4)--(1,-2.5);
\draw[->] (0,-4)--(2,-3);

\node[left] at (-4,2) {$2$};
\node[left] at (-4,-2) {$3$};
\node[below] at (0,-4) {$6$};

\node at (6,0) {$\times$};

\fill(12,4) circle (5pt);
\fill(8,2) circle (5pt);
\fill(8,-2) circle (5pt);
\fill(12,-4) circle (5pt);
\fill(16,-2) circle (5pt);
\fill(16,2) circle (5pt);

\draw[dashed] (12,4)--(8,2);
\draw[dashed] (12,4)--(8,-2);
\draw[dashed] (12,4)--(16,-2);
\draw[dashed] (12,4)--(16,2);
\draw[dashed] (8,2)--(8,-2);
\draw[dashed] (8,2)--(12,-4);
\draw[dashed] (8,2)--(16,2);
\draw[very thick] (8,-2)--(16,-2);
\draw[very thick] (8,-2)--(12,-4);
\draw[dashed] (16,2)--(16,-2);
\draw[dashed] (16,2)--(12,-4);
\draw[very thick] (16,-2)--(12,-4);

\draw[->] (8,-2)--(8,0);
\draw[->] (8,-2)--(9,-0.5);
\draw[->] (16,-2)--(15,-0.5);
\draw[->] (16,-2)--(16,0);
\draw[->] (12,-4)--(11,-2.5);
\draw[->] (12,-4)--(13,-2.5);

\node[below] at (8,-2) {$3$};
\node[below] at (12,-4) {$6$};
\node[below] at (16,-2) {$5$};

\node at (18,0) {$=$}; 

%--+24

\fill(24,4) circle (5pt);
\fill(20,2) circle (5pt);
\fill(20,-2) circle (5pt);
\fill(24,-4) circle (5pt);
\fill(28,-2) circle (5pt);
\fill(28,2) circle (5pt);

\draw[dashed] (24,4)--(20,2);
\draw[dashed] (24,4)--(20,-2);
\draw[dashed] (24,4)--(28,-2);
\draw[dashed] (24,4)--(28,2);
\draw[dashed] (20,2)--(20,-2);
\draw[dashed] (20,2)--(24,-4);
\draw[dashed] (20,2)--(28,2);
\draw[dashed] (20,-2)--(28,-2);
\draw[very thick] (20,-2)--(24,-4);
\draw[dashed] (28,2)--(28,-2);
\draw[dashed] (28,2)--(24,-4);
\draw[dashed] (28,-2)--(24,-4);

\draw[->] (20,-2)--(20,-1);
\draw[->] (20,-2)--(22,-2);
\draw[->] (20,-2)--(21,-0.5);
\draw[->] (24,-4)--(25,-2.5);
\draw[->] (24,-4)--(26,-3);
\draw[->] (24,-4)--(23,-2.5);

\node[left] at (20,-2) {$3$};
\node[below] at (24,-4) {$6$};

\node at (30,0) {$\times$};

\fill(36,4) circle (5pt);
\fill(32,2) circle (5pt);
\fill(32,-2) circle (5pt);
\fill(36,-4) circle (5pt);
\fill(40,-2) circle (5pt);
\fill(40,2) circle (5pt);

\draw[dashed] (36,4)--(32,2);
\draw[dashed] (36,4)--(32,-2);
\draw[dashed] (36,4)--(40,-2);
\draw[dashed] (36,4)--(40,2);
\draw[dashed] (32,2)--(32,-2);
\draw[dashed] (32,2)--(36,-4);
\draw[dashed] (32,2)--(40,2);
\draw[dashed] (32,-2)--(40,-2);
\draw[very thick] (32,-2)--(36,-4);
\draw[dashed] (40,2)--(40,-2);
\draw[dashed] (40,2)--(36,-4);
\draw[dashed] (40,-2)--(36,-4);

\draw[->] (32,-2)--(33,-0.5);
\draw[->] (36,-4)--(37,-2.5);

\node[below] at (32,-2) {$3$};
\node[below] at (36,-4) {$6$};

\end{scope}
\end{tikzpicture}
\caption{%Figure of the formula in Theorem~\ref{rel5} (also see Figure~\ref{fig_rel5-2}), where $K=\{2,3,6\}$, $H=\{3,5,6\}$.
This represents the following relation (also see Figure~\ref{fig_rel5-2}):
\begin{align*}
\Delta_{\{2,3,6\}}\cdot \Delta_{\{3,5,6\}} 
=\Delta_{\{3,6\}}\cdot (\mathfrak{S}_{1} (M_{1}, M_{4})-X) 
=\Delta_{\{3,6\}}\cdot (M_{1}+ M_{4}-X), 
\end{align*}
because $K\cap H=\{3,6\}$ and $K\cup H=\{2,3,5,6\}\subset I$ (so $V_{4}\setminus (K\cup H)=\{1,4\}$).}
\label{fig_rel5}
\end{figure}

%%%%%%%%%%%%%%%%%%%%%%%%%%%%%%%%%%%%%%%%%%%%%%%%%%%%%%%%%%%%%%%%%%%%%%%%%%%
% Figure 13
%%%%%%%%%%%%%%%%%%%%%%%%%%%%%%%%%%%%%%%%%%%%%%%%%%%%%%%%%%%%%%%%%%%%%%%%%%%
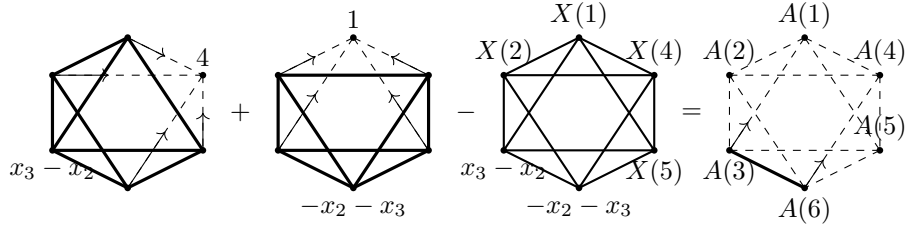
\begin{figure}[H]
\begin{tikzpicture}
\begin{scope}[xscale=0.25, yscale=0.25]
\fill(0,4) circle (5pt);
\fill(-4,2) circle (5pt);
\fill(-4,-2) circle (5pt);
\fill(0,-4) circle (5pt);
\fill(4,-2) circle (5pt);
\fill(4,2) circle (5pt);

\draw[very thick] (0,4)--(-4,2);
\draw[very thick] (0,4)--(-4,-2);
\draw[very thick] (0,4)--(4,-2);
\draw[dashed] (0,4)--(4,2);
\draw[very thick] (-4,2)--(-4,-2);
\draw[very thick] (-4,2)--(0,-4);
\draw[dashed] (-4,2)--(4,2);
\draw[very thick] (-4,-2)--(4,-2);
\draw[very thick] (-4,-2)--(0,-4);
\draw[dashed] (4,2)--(4,-2);
\draw[dashed] (4,2)--(0,-4);
\draw[very thick] (4,-2)--(0,-4);

\draw[->] (-4,2)--(-1,2);
\draw[->] (0,4)--(2,3);
\draw[->] (4,-2)--(4,0);
\draw[->] (0,-4)--(2,-1);

\node at (6,0) {$+$};

\fill(12,4) circle (5pt);
\fill(8,2) circle (5pt);
\fill(8,-2) circle (5pt);
\fill(12,-4) circle (5pt);
\fill(16,-2) circle (5pt);
\fill(16,2) circle (5pt);

\draw[dashed] (12,4)--(8,2);
\draw[dashed] (12,4)--(8,-2);
\draw[dashed] (12,4)--(16,-2);
\draw[dashed] (12,4)--(16,2);
\draw[very thick] (8,2)--(8,-2);
\draw[very thick] (8,2)--(12,-4);
\draw[very thick] (8,2)--(16,2);
\draw[very thick] (8,-2)--(16,-2);
\draw[very thick] (8,-2)--(12,-4);
\draw[very thick] (16,2)--(16,-2);
\draw[very thick] (16,2)--(12,-4);
\draw[very thick] (16,-2)--(12,-4);

\draw[->] (8,2)--(10,3);
\draw[->] (8,-2)--(10,1);
\draw[->] (16,-2)--(14,1);
\draw[->] (16,2)--(14,3);

\node[above] at (12,4) {$1$};
\node[above] at (4,2) {$4$};
\node[below] at (12,-4) {$-x_{2}-x_{3}$};
\node[below] at (-4,-2) {$x_{3}-x_{2}$};

\node at (18,0) {$-$};

\fill(24,4) circle (5pt);
\node[above] at (24,4) {$X(1)$};
\fill(20,2) circle (5pt);
\node[above] at (20,2) {$X(2)$};
\fill(20,-2) circle (5pt);
\node[below] at (20,-2) {$x_{3}-x_{2}$};
\fill(24,-4) circle (5pt);
\node[below] at (24,-4) {$-x_{2}-x_{3}$};
\fill(28,-2) circle (5pt);
\node[below] at (28,-2) {$X(5)$};
\fill(28,2) circle (5pt);
\node[above] at (28,2) {$X(4)$};

\draw[thick] (24,4)--(20,2);
\draw[thick] (24,4)--(20,-2);
\draw[thick] (24,4)--(28,-2);
\draw[thick] (24,4)--(28,2);
\draw[thick] (20,2)--(20,-2);
\draw[thick] (20,2)--(24,-4);
\draw[thick] (20,2)--(28,2);
\draw[thick] (20,-2)--(28,-2);
\draw[thick] (20,-2)--(24,-4);
\draw[thick] (28,2)--(28,-2);
\draw[thick] (28,2)--(24,-4);
\draw[thick] (28,-2)--(24,-4);

\node at (30,0) {$=$};

%--+24

\fill(36,4) circle (5pt);
\fill(32,2) circle (5pt);
\fill(32,-2) circle (5pt);
\fill(36,-4) circle (5pt);
\fill(40,-2) circle (5pt);
\fill(40,2) circle (5pt);

\draw[dashed] (36,4)--(32,2);
\draw[dashed] (36,4)--(32,-2);
\draw[dashed] (36,4)--(40,-2);
\draw[dashed] (36,4)--(40,2);
\draw[dashed] (32,2)--(32,-2);
\draw[dashed] (32,2)--(36,-4);
\draw[dashed] (32,2)--(40,2);
\draw[dashed] (32,-2)--(40,-2);
\draw[very thick] (32,-2)--(36,-4);
\draw[dashed] (40,2)--(40,-2);
\draw[dashed] (40,2)--(36,-4);
\draw[dashed] (40,-2)--(36,-4);

\draw[->] (32,-2)--(33,-0.5);
\draw[->] (36,-4)--(37,-2.5);

\node[below] at (32,-2) {$A(3)$};
\node[below] at (36,-4) {$A(6)$};
\node[above] at (36, 4) {$A(1)$};
\node[above] at (32, 2) {$A(2)$};
\node[above] at (40, 2) {$A(4)$};
\node[above] at (40, -2) {$A(5)$};

\end{scope}
\end{tikzpicture}
\caption{This figure represents the term $A=M_{1}+M_{4}-X$ in Figure~\ref{fig_rel5}.
Note that $A(3)=A(6)=-x_{2}$ by Figure~\ref{fig_0-cochain_presentation}.
Moreover, 
$A(1), A(2), A(4), A(5)$ might not be $0\in H^{2}(BT^{3})$; however, $\Delta_{\{3,6\}}(1)=\Delta_{\{3,6\}}(2)=\Delta_{\{3,6\}}(4)=\Delta_{\{3,6\}}(5)=0$.}
\label{fig_rel5-2}
\end{figure}

%%%%%%%%%%%%%%%%%%%%%%%%%%%%%%%%%%%%%%%%%%%%%%%%%%%%%%%%%%% 
%%Remark 6.2
%%%%%%%%%%%%%%%%%%%%%%%%%%%%%%%%%%%%%%%%%%%%%%%%%%%%%%%%%%
%\begin{remark}
%Due to \cite{MMP}, in the graph equivariant cohomology of torus graphs which is isomorphic to the face ring of a simplicial poset, the multiplication of two Thom classes of GKM subgraphs is just the summation of the Thom classes of the connected components of their intersection.
%However, by Theorem~\ref{rel5}, this relation does not hold in $H^{*}(\mathcal{GQ}_{2n})$. Namely, $H^{*}(\mathcal{GQ}_{2n})$ is not the face ring of a simplicial poset. 
%\end{remark}

%%%%%%%%%%%%%%%%%%%%%%%%%%%%%%%%%%%%%%%%%%%%%%%%%%%%%%%%%%% Section 7
%%%%%%%%%%%%%%%%%%%%%%%%%%%%%%%%%%%%%%%%%%%%%%%%%%%%%%%%%%
\section{Comparison of two ordinary cohomology rings $H^{*}(Q_{4n})$ and $H^{*}(Q_{4n+2})$}
\label{sect:7}

Since $H^{odd}(Q_{2n})=0$ by \cite{La74}, $Q_{2n}$ is the equivariantly formal GKM manifold (see \cite{GKM}).
Therefore, its ordinary cohomology also can be computed by the quotient of $H_{T}^{*}(Q_{2n})$ by $H^{>0}(BT^{n+1})$.
Thus, by using Theorem~\ref{main-theorem2} and Proposition~\ref{polynomial generators}, we also have the ordinary cohomology of $Q_{2n}$ by the different way of \cite{La74}.
%%%%%%%%%%%%%%%%%%%%%%%%%%%%%%%%%%%%%%%%%%%%%%%%%%%%%%%%%%% 
%%Corollary 7.1
%%%%%%%%%%%%%%%%%%%%%%%%%%%%%%%%%%%%%%%%%%%%%%%%%%%%%%%%%%
\begin{corollary}
\label{main-corollary}
The ordinary cohomology $H^{*}(Q_{2n})$ is isomorphic to $\mathbb{Z}[\mathcal{GQ}_{2n}]/\mathcal{J}$, where $\mathcal{J}$ is generated by
\begin{align*}
M_{i+1}-M_{1}
\end{align*}
for $i=1,\ldots, n+1$.
\end{corollary}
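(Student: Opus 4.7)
The plan is to combine equivariant formality of $Q_{2n}$ with the ring presentation proved in Theorem~\ref{main-theorem2}. Since $H^{odd}(Q_{2n})=0$ (from Lai's computation \eqref{ordinary-cohom}), the Serre spectral sequence of the Borel fibration $Q_{2n}\to ET\times_{T}Q_{2n}\to BT$ collapses at $E_{2}$, and the edge homomorphism induces the standard isomorphism
\begin{equation*}
H^{*}(Q_{2n})\;\simeq\; H^{*}_{T^{n+1}}(Q_{2n})\bigm/\bigl\langle \iota_{T}(H^{>0}(BT^{n+1}))\bigr\rangle,
\end{equation*}
where $\iota_{T}:H^{*}(BT^{n+1})\to H^{*}_{T^{n+1}}(Q_{2n})$ is the canonical $H^{*}(BT^{n+1})$-algebra structure map. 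This observation is already invoked in the Introduction to relate $H^{*}(Q_{2n})$ and $H^{*}_{T^{n+1}}(Q_{2n})$.

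Next I would translate this quotient through the combinatorial isomorphism. Lemma~\ref{key-lemma} identifies $H^{*}_{T^{n+1}}(Q_{2n})$ with $H^{*}(\mathcal{Q}_{2n})$ as graded $H^{*}(BT^{n+1})$-algebras, and Theorem~\ref{main-theorem2} further identifies $H^{*}(\mathcal{Q}_{2n})$ with $\mathbb{Z}[\mathcal{Q}_{2n}]$. Under this composite isomorphism, the scalar map $\iota$ in \eqref{algebra-structure} is transported to the natural map $H^{*}(BT^{n+1})\to \mathbb{Z}[\mathcal{Q}_{2n}]$, so it suffices to describe the image of the degree two generators $x_{1},\ldots,x_{n+1}$ of $H^{*}(BT^{n+1})$ in $\mathbb{Z}[\mathcal{Q}_{2n}]$.

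This description is supplied verbatim by Proposition~\ref{polynomial generators}, which gives
\begin{equation*}
x_{i}\;=\;M_{i+1}-M_{1}\qquad (i=1,\ldots,n+1)
\end{equation*}
as elements of $H^{*}(\mathcal{Q}_{2n})$, and hence as elements of $\mathbb{Z}[\mathcal{Q}_{2n}]$ via $\psi$. Therefore the ideal $\langle \iota_{T}(H^{>0}(BT^{n+1}))\rangle$ corresponds under the isomorphism of Theorem~\ref{main-theorem2} to the ideal $\mathcal{J}\subset \mathbb{Z}[\mathcal{Q}_{2n}]$ generated by $M_{2}-M_{1},\ldots,M_{n+2}-M_{1}$, and quotienting yields $H^{*}(Q_{2n})\simeq \mathbb{Z}[\mathcal{Q}_{2n}]/\mathcal{J}$.

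There is no genuine obstacle here: the statement is essentially a formal consequence of equivariant formality together with the two main inputs already established (Theorem~\ref{main-theorem2} and Proposition~\ref{polynomial generators}). The only point that deserves care is confirming that the $H^{*}(BT^{n+1})$-module structure implicit in equivariant formality agrees with the one encoded by $\iota$ in \eqref{algebra-structure}, i.e.\ that the isomorphisms of Lemma~\ref{key-lemma} and Theorem~\ref{main-theorem2} are $H^{*}(BT^{n+1})$-algebra maps; this is already built into their statements, so the reduction to $\mathcal{J}=\langle M_{i+1}-M_{1}\rangle_{i=1}^{n+1}$ follows immediately.
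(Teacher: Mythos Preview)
Your proposal is correct and follows exactly the same approach as the paper: equivariant formality (from $H^{odd}(Q_{2n})=0$) gives $H^{*}(Q_{2n})\simeq H_{T}^{*}(Q_{2n})/\langle H^{>0}(BT^{n+1})\rangle$, and then Theorem~\ref{main-theorem2} together with Proposition~\ref{polynomial generators} identifies this quotient with $\mathbb{Z}[\mathcal{Q}_{2n}]/\mathcal{J}$. The paper's own justification is a one-sentence sketch of precisely this argument.
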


Recall that the cohomology ring formula of $Q_{2n}$ depends on $n$ is even or odd, i.e., by \cite{La74}, 
\begin{align*}
& H^{*}(Q_{4n})\simeq \mathbb{Z}[c,x]/\langle c^{2n+1}-2cx, x^{2}- c^{2n}x \rangle; \\
& H^{*}(Q_{4n+2})\simeq \mathbb{Z}[c,x]/\langle c^{2n+2}-2cx, x^{2} \rangle.
\end{align*}
In this final section, 
we give the combinatorial reason why this difference occurs by using 
Corollary~\ref{main-corollary}.
To do that, 
the following lemma is essential.

%%%%%%%%%%%%%%%%%%%%%%%%%%%%%%%%%%%%%%%%%%%%%%%%%%%%%%%%%%% 
%%Lemma 7.2
%%%%%%%%%%%%%%%%%%%%%%%%%%%%%%%%%%%%%%%%%%%%%%%%%%%%%%%%%%
\begin{lemma}
\label{final-lemma}
If $K\subset V_{2n}$ is the following subset with the property $(\ast)$:
\begin{align*}
K=\{i_{1},\ldots, i_{n+1}\}.
\end{align*}
%such that $\{i_{j}, \overline{i_{j}}\}\not\subset K$ for all $j=1,\ldots, n+1$, 
Then, there is
the following formula in $\mathbb{Z}[\mathcal{GQ}_{2n}]/\mathcal{J}$:
\begin{align*}
\Delta_{K}=\Delta_{\{i_{1},\ldots,i_{n-1},\overline{i_{n}},\overline{i_{n+1}}\}}.
\end{align*}
\end{lemma}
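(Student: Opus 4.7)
The plan is first to observe that in the quotient $\mathbb{Z}[\mathcal{Q}_{2n}]/\mathcal{J}$ every degree-$2$ generator $M_v$ collapses to a single class, and then to pass from $K$ to the target set $K'' := \{i_{1},\ldots,i_{n-1},\overline{i_{n}},\overline{i_{n+1}}\}$ via two applications of Relation~3 through a carefully chosen intermediate subset.

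First I would set up the collapse. The ideal $\mathcal{J}$ identifies $M_{1}=M_{2}=\cdots=M_{n+2}$; call this common class $c$. Relation~2 applied with $v=n+1$ (so $\overline{v}=n+2$) gives $X=M_{n+1}+M_{n+2}=2c$, and then for each $v\in\{1,\ldots,n\}$ the identity $M_{v}+M_{\overline{v}}=X=2c$ forces $M_{\overline{v}}=c$. Hence $M_{w}=c$ for every $w\in V_{2n}$, and in particular any product of $n$ of them equals $c^{n}$ in the quotient.

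Next I would introduce the intermediate set $K':=\{i_{1},\ldots,i_{n-1},\overline{i_{n}},i_{n+1}\}$, and check that $K'$ and $K''$ both satisfy property~$(\ast)$ with cardinality $n+1$ (immediate from property $(\ast)$ for $K$). Now take
\[
I_{1}:=\{\overline{i_{1}},\ldots,\overline{i_{n-1}},\overline{i_{n+1}}\}\subset V_{2n}.
\]
One checks that $|I_{1}|=n$, that $I_{1}$ satisfies $(\ast)$, and that the unique pair in its complement is $\{i_{n},\overline{i_{n}}\}$, with $(I_{1}\cup\{\overline{i_{n}}\})^{c}=K$ and $(I_{1}\cup\{i_{n}\})^{c}=K'$. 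Relation~3 then yields
\[
\prod_{j\in I_{1}}M_{j}\;=\;\Delta_{K}+\Delta_{K'}.
\]
Repeating with $I_{2}:=\{\overline{i_{1}},\ldots,\overline{i_{n-1}},i_{n}\}$, whose complement's unique pair is $\{i_{n+1},\overline{i_{n+1}}\}$, Relation~3 gives $\prod_{j\in I_{2}}M_{j}=\Delta_{K'}+\Delta_{K''}$.

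Finally, passing to the quotient $\mathbb{Z}[\mathcal{Q}_{2n}]/\mathcal{J}$, both left-hand sides equal $c^{n}$ by the first paragraph, so
\[
\Delta_{K}+\Delta_{K'}\;=\;c^{n}\;=\;\Delta_{K'}+\Delta_{K''},
\]
and subtracting gives $\Delta_{K}=\Delta_{K''}$, as required. The only nontrivial part is the combinatorial verification that $I_{1}$ and $I_{2}$ really are valid inputs to Relation~3 with the claimed unique pair in their complements; this is a short bookkeeping check enabled by property $(\ast)$ of $K$, and I expect it to be the main (though modest) obstacle.
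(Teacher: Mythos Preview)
Your proof is correct and follows essentially the same approach as the paper's: collapse all $M_{v}$ to a common class, then apply Relation~3 twice to pass through an intermediate subset differing from $K$ by a single swap, and subtract. The only cosmetic difference is the order of swaps: you go through $K'=\{i_{1},\ldots,i_{n-1},\overline{i_{n}},i_{n+1}\}$ (swapping $i_{n}$ first), whereas the paper goes through $\{i_{1},\ldots,i_{n},\overline{i_{n+1}}\}$ (swapping $i_{n+1}$ first), which is immaterial.
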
 
\begin{proof}
By definition of $\mathcal{J}$, 
in $\mathbb{Z}[\mathcal{GQ}_{2n}]/\mathcal{J}$,
we have 
\begin{align*}
M_{1}=M_{2}=\cdots =M_{n+1}=M_{n+2}.
\end{align*}
By Relation~2, $M_{n+1}+M_{n+2}=M_{i}+M_{\overline{i}}$ for all $i=1,\ldots, n$.
Therefore, we also have that 
\begin{align*}
M_{1}=M_{n+3}=\cdots =M_{2n+2}.
\end{align*}
Consequently, we have $M_{i}=M_{j}$ for all $i,j\in [2n+2]$.
%Assume that $K$ satisfies the assumption of this lemma.
Because $K$ satisfies the property $(\ast)$, 
 for every $a\in K$, the subset $I:=K\setminus\{a\}\subset V_{2n}$ satisfies that $|I|=n$; moreover, there are unique pair $\{a,\overline{a}\}\subset V_{2n}\setminus I$.
Therefore, we may apply Relation~3 for $K\setminus\{a\}$.
Together with $M_{1}=M_{i}$ for all $i\in V_{2n}$ as above, we have 
\begin{align}
\label{final-relation}
M_{1}^{n}=\Delta_{K^{c}}+\Delta_{((K\setminus\{a\})\cup \{\overline{a}\})^{c}}&=\Delta_{K^{c}}+\Delta_{(K^{c}\setminus\{\overline{a}\})\cup \{a\}}.
\end{align}
Note that this equation \eqref{final-relation} holds for all $K\subset V_{2n}$ which satisfies the assumption of this lemma.
%In particular, we may take $H=K^{c}$; therefore, we have
%\begin{align*}
%\Delta_{K^{c}}+\Delta_{(K^{c}\setminus\{\overline{a}\})\cup \{a\}}
%=\Delta_{K}+\Delta_{(K\setminus\{a\})\cup \{\overline{a}\}}.
%\end{align*}
%Therefore, it is enough to show the statement for the case when $K^{c}=\{1,\ldots, n+1\}\subset V_{2n}$.
%By applying \eqref{final-relation} to $i_{n+1}\in \{i_{1},\ldots, i_{n+1}\}$ and $i_{n}\in \{i_{1},\ldots,i_{n}, \overline{i_{n+1}}\}$, 
Therefore, we have 
\begin{align*}
M_{1}^{n}&=\Delta_{\{i_{1},\ldots, i_{n+1}\}}+\Delta_{\{i_{1},\ldots, i_{n}, \overline{i_{n+1}}\}}=\Delta_{\{i_{1},\ldots, i_{n}, \overline{i_{n+1}}\}}+\Delta_{\{i_{1},\ldots,i_{n-1}, \overline{i_{n}},\overline{i_{n+1}}\}}.
\end{align*}
Thus, 
$\Delta_{\{i_{1},\ldots, i_{n+1}\}}=\Delta_{\{i_{1},\ldots,i_{n-1}, \overline{i_{n}},\overline{i_{n+1}}\}}$.
\end{proof}
Consequently, we have the following corollary.
%%%%%%%%%%%%%%%%%%%%%%%%%%%%%%%%%%%%%%%%%%%%%%%%%%%%%%%%%%% 
%%Corollary 7.3
%%%%%%%%%%%%%%%%%%%%%%%%%%%%%%%%%%%%%%%%%%%%%%%%%%%%%%%%%%
\begin{corollary}
\label{final-cor}
For $K\subset V_{2m}$ which satisfies the assumption of Lemma~\ref{final-lemma}, 
 there are the following relations:
\begin{itemize}
\item $\Delta_{K}\cdot (M_{i}^{2n}-\Delta_{K})=0$ in $\mathbb{Z}[\mathcal{GQ}_{4n}]/\mathcal{J}$ if $m=2n$;
\item $\Delta_{K}^{2}=0$ in $\mathbb{Z}[\mathcal{GQ}_{4n+2}]/\mathcal{J}$ if $m=2n+1$.
\end{itemize}
\end{corollary}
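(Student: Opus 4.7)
The plan is to deduce both parts of Corollary~\ref{final-cor} from a single computation: apply Theorem~\ref{rel5} to the square $\Delta_K^{2}$ (i.e.\ with $H = K$), then pass to the quotient $\mathbb{Z}[\mathcal{Q}_{2m}]/\mathcal{J}$. The key observation, already used inside the proof of Lemma~\ref{final-lemma}, is that the defining relations of $\mathcal{J}$ together with Relation~2 collapse every degree-$2$ generator to a single class, $M := M_{1} = M_{2} = \cdots = M_{2m+2}$; consequently $X = M_v + M_{\overline{v}}$ becomes $2M$ in the quotient. This should reduce the mixed formula of Theorem~\ref{rel5} to a scalar multiple of $\Delta_{K}\cdot M^{m}$, with the parity of $m$ entering through a binomial sum.

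More concretely, setting $H = K$ gives $K\cap K = K$, so $k = m$, and $V_{2m}\setminus(K\cup K) = K^{c}$ with $|K^{c}| = m+1$. Since every $M_{v}$ equals $M$ in the quotient, $\mathfrak{S}_{m-i}(M_{v}\mid v\in K^{c})$ reduces to $\binom{m+1}{m-i} M^{m-i}$, and formula \eqref{5th-rel} becomes
\begin{equation*}
\Delta_{K}^{2} \;=\; \Delta_{K}\cdot M^{m}\cdot \sum_{i=0}^{m}(-1)^{i}\,2^{i}\binom{m+1}{m-i}.
\end{equation*}
The change of variable $j = m-i$, followed by comparison with the binomial expansion of $(2-1)^{m+1} = 1$ (the sum differs from the full expansion only by the missing $j = m+1$ term), shows that the scalar equals $\tfrac{1+(-1)^{m}}{2}$, namely $1$ when $m$ is even and $0$ when $m$ is odd. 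Specialising to $m = 2n$ yields $\Delta_{K}^{2} = \Delta_{K}\cdot M^{2n} = \Delta_{K}\cdot M_{i}^{2n}$, i.e.\ $\Delta_{K}(M_{i}^{2n} - \Delta_{K}) = 0$ in $\mathbb{Z}[\mathcal{Q}_{4n}]/\mathcal{J}$; specialising to $m = 2n+1$ yields $\Delta_{K}^{2} = 0$ in $\mathbb{Z}[\mathcal{Q}_{4n+2}]/\mathcal{J}$.

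The only subtle point is that Theorem~\ref{rel5} is phrased for a pair of classes $\Delta_{K},\Delta_{H}$, and one must verify that nothing in its proof excludes the degenerate case $H = K$; inspection shows the formula is valid there as well, so it can be applied directly. Beyond that, the argument is purely arithmetic: the main technical ingredient is the evaluation of $\sum_{j=0}^{m}(-1)^{j}2^{m-j}\binom{m+1}{j}$ via the binomial theorem, which cleanly produces the parity dichotomy that distinguishes $H^{*}(Q_{4n})$ from $H^{*}(Q_{4n+2})$.
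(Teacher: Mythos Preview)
Your argument is correct but follows a genuinely different route from the paper.

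The paper's proof never touches Theorem~\ref{rel5}. Instead it iterates Lemma~\ref{final-lemma} to identify $\Delta_{K}$ with a shifted class and then uses equation~\eqref{final-relation} (a direct consequence of Relation~3) together with Relation~1. Concretely: when $m$ is odd, repeated application of Lemma~\ref{final-lemma} swaps all $m+1$ indices in pairs, yielding $\Delta_{K}=\Delta_{K^{c}}$, and then $\Delta_{K}^{2}=\Delta_{K}\Delta_{K^{c}}=0$ by Relation~1. When $m=2n$ is even, the same iteration leaves one index unswapped, and one further use of~\eqref{final-relation} gives $M_{1}^{2n}=\Delta_{K}+\Delta_{K^{c}}$; multiplying by $\Delta_{K}$ and using Relation~1 again finishes the argument.

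Your approach instead specialises the multiplicative formula of Theorem~\ref{rel5} to $H=K$ and reduces everything to the binomial identity $\sum_{i=0}^{m}(-2)^{i}\binom{m+1}{m-i}=\tfrac{1+(-1)^{m}}{2}$. This is perfectly valid (nothing in the proof of Theorem~\ref{rel5} requires $K\neq H$) and is arguably more direct for the bare statement of the corollary, since it bypasses Lemma~\ref{final-lemma} altogether. The paper's route, on the other hand, yields the extra structural information $\Delta_{K^{c}}=M_{1}^{2n}-\Delta_{K}$ in the even case and makes the $K\leftrightarrow K^{c}$ symmetry explicit, which is what the surrounding discussion and Figures~\ref{final-figure1}--\ref{final-figure2} are meant to illustrate.

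One small slip: after your substitution $j=m-i$ the sum is $\sum_{j=0}^{m}(-1)^{m-j}2^{m-j}\binom{m+1}{j}$, not $\sum_{j=0}^{m}(-1)^{j}2^{m-j}\binom{m+1}{j}$; the two differ by a global factor $(-1)^{m}$, which is harmless here because the odd case already vanishes and the even case has $(-1)^{m}=1$, so your conclusion stands.
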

\begin{proof}
Suppose $m=2n+1$, i.e., $m\equiv 1 \mod 2$. 
By iterating to use Lemma~\ref{final-lemma}, we have $\Delta_{K^{c}}=\Delta_{K}$.
Therefore, by $K\cap K^{c}=\emptyset$ and   
Relation~1, we have the 2nd relation in the statement. 

Suppose $m=2n$, i.e., $m\equiv 0 \mod 2$. 
In this case, $K=\{i_{1},\ldots, i_{2n+1}\}$.
By iterating to use Lemma~\ref{final-lemma}, we obtain  
\begin{align*}
\Delta_{K}=\Delta_{\{i_{1},\overline{i_{2}}, \ldots, \overline{i_{2n+1}}\}}.
\end{align*}
By applying \eqref{final-relation} to $i_{1}\in \{\overline{i_{1}},i_{2}\ldots, i_{2n+1}\}^{c}=\{i_{1},\overline{i_{2}}\ldots, \overline{i_{2n+1}}\}$, it follows from this relation that 
\begin{align*}
M_{1}^{2n}&=
\Delta_{\{i_{1},\overline{i_{2}}, \ldots, \overline{i_{2n+1}}\}}+\Delta_{\{\overline{i_{1}},\overline{i_{2}}, \ldots, \overline{i_{2n+1}}\}}=\Delta_{K}+\Delta_{K^{c}}.
\end{align*}
Hence, we have $\Delta_{K^{c}}=M_{1}^{2n}-\Delta_{K}$. 
Therefore, by $K\cap K^{c}=\emptyset$ and Relation~1, we have the 1st relation in the statement. 
\end{proof}

Figure~\ref{final-figure1} and Figure~\ref{final-figure2} show the difference of the ordinary cohomology for $n=2$ and $n=3$.
For example, in Figure~\ref{final-figure1}, $K=\{3,5,6\}$ (see Figure~\ref{2-examples}).
In this case, by applying Lemma~\ref{final-lemma}, $\Delta_{K}=\Delta_{H}$ for $H=\{1,2,3\}=\{2,4,6\}=\{1,4,5\}$.
This shows that $K\cap H\not=\emptyset$; therefore, $\Delta_{K}^{2}\not=0$.
However, in Figure~\ref{final-figure2}, we can take such $H$ as $K^{c}$; this gives $\Delta_{K}^{2}=0$

\begin{figure}[H]
\begin{tabular}{cc}
\begin{minipage}[t]{0.5\linewidth}
\begin{tikzpicture}
\begin{scope}[xscale=0.25, yscale=0.25]
\fill(0,4) circle (5pt);
\node[above] at (0,4) {$\Delta_{K}$}; 
\fill(-4,2) circle (5pt);
%\node[left] at (-4,2) {$0$};
\fill(-4,-2) circle (5pt);
%\node[left] at (-4,-2) {$0$};
\fill(0,-4) circle (5pt);
%\node[below] at (0,-4) {$0$};
\fill(4,-2) circle (5pt);
%\node[right] at (4,-2) {$0$};
\fill(4,2) circle (5pt);
%\node[right] at (4,2) {$0$};

\draw[dashed] (0,4)--(-4,2);
\draw[dashed] (0,4)--(-4,-2);
\draw[dashed] (0,4)--(4,-2);
\draw[dashed] (0,4)--(4,2);
\draw[dashed] (-4,2)--(-4,-2);
\draw[dashed] (-4,2)--(0,-4);
\draw[dashed] (-4,2)--(4,2);
\draw[very thick] (-4,-2)--(4,-2);
\draw[very thick] (-4,-2)--(0,-4);
\draw[dashed] (4,2)--(4,-2);
\draw[dashed] (4,2)--(0,-4);
\draw[very thick] (4,-2)--(0,-4);

\node at (6,0) {$\times$};

\fill(12,4) circle (5pt);
\node[above] at (12,4) {$\Delta_{K^{c}}=M_{1}^{2}-\Delta_{K}$};
\fill(8,2) circle (5pt);
%\node[above] at (8,2) {$M_{I}(2)$};
\fill(8,-2) circle (5pt);
%\node[below] at (8,-2) {$M_{I}(3)$};
\fill(12,-4) circle (5pt);
%\node[below] at (12,-4) {$M_{I}(6)$};
\fill(16,-2) circle (5pt);
%\node[below] at (16,-2) {$M_{I}(5)$};
\fill(16,2) circle (5pt);
%\node[above] at (16,2) {$M_{I}(4)$};

\draw[very thick] (12,4)--(8,2);
\draw[dashed] (12,4)--(8,-2);
\draw[dashed] (12,4)--(16,-2);
\draw[very thick] (12,4)--(16,2);
\draw[dashed] (8,2)--(8,-2);
\draw[dashed] (8,2)--(12,-4);
\draw[very thick] (8,2)--(16,2);
\draw[dashed] (8,-2)--(16,-2);
\draw[dashed] (8,-2)--(12,-4);
\draw[dashed] (16,2)--(16,-2);
\draw[dashed] (16,2)--(12,-4);
\draw[dashed] (16,-2)--(12,-4);

\node at (18,0) {$=0$};
\end{scope}
\end{tikzpicture}
\caption{$\Delta_{K}(M_{1}^{2}-\Delta_{K})=0$ for $n=2$.}
\label{final-figure1}
%\end{figure}
\end{minipage} &
\begin{minipage}[t]{0.5\linewidth}
%\begin{figure}[H]
\begin{tikzpicture}
\begin{scope}[xscale=0.25, yscale=0.25]
\coordinate (1) at (-1.5,4);
\coordinate (2) at (-4,1.5);
\coordinate (3) at (-4,-1.5);
\coordinate (4) at (-1.5,-4);
\coordinate (5) at (1.5,4);
\coordinate (6) at (4,1.5);
\coordinate (7) at (4,-1.5);
\coordinate (8) at (1.5,-4);

\node[above] at (0,4) {$\Delta_{K}$}; 

\fill(1) circle (5pt);
\fill(2) circle (5pt);
\fill(3) circle (5pt);
\fill(4) circle (5pt);
\fill(5) circle (5pt);
\fill(6) circle (5pt);
\fill(7) circle (5pt);
\fill(8) circle (5pt);

\draw[dashed] (1)--(2);
\draw[dashed] (1)--(3);
\draw[dashed] (1)--(4);
\draw[dashed] (1)--(5);
\draw[dashed] (1)--(6);
\draw[dashed] (1)--(7);
\draw[dashed] (2)--(3);
\draw[dashed] (2)--(4);
\draw[dashed] (2)--(5);
\draw[dashed] (2)--(6);
\draw[dashed] (2)--(8);
\draw[very thick] (3)--(4);
\draw[dashed] (3)--(5);
\draw[very thick] (3)--(7);
\draw[very thick] (3)--(8);
\draw[dashed] (4)--(6);
\draw[very thick] (4)--(7);
\draw[very thick] (4)--(8);
\draw[dashed] (5)--(6);
\draw[dashed] (5)--(7);
\draw[dashed] (5)--(8);
\draw[dashed] (6)--(7);
\draw[dashed] (6)--(8);
\draw[very thick] (7)--(8);

\node at (6,0) {$\times$};

\coordinate (a1) at (10.5,4);
\coordinate (a2) at (8,1.5);
\coordinate (a3) at (8,-1.5);
\coordinate (a4) at (10.5,-4);
\coordinate (a5) at (13.5,4);
\coordinate (a6) at (16,1.5);
\coordinate (a7) at (16,-1.5);
\coordinate (a8) at (13.5,-4);

\node[above] at (12,4) {$\Delta_{K^{c}}=\Delta_{K}$};

\fill(a1) circle (5pt);
\fill(a2) circle (5pt);
\fill(a3) circle (5pt);
\fill(a4) circle (5pt);
\fill(a5) circle (5pt);
\fill(a6) circle (5pt);
\fill(a7) circle (5pt);
\fill(a8) circle (5pt);

\draw[very thick] (a1)--(a2);
\draw[dashed] (a1)--(a3);
\draw[dashed] (a1)--(a4);
\draw[very thick] (a1)--(a5);
\draw[very thick] (a1)--(a6);
\draw[dashed] (a1)--(a7);
\draw[dashed] (a2)--(a3);
\draw[dashed] (a2)--(a4);
\draw[very thick] (a2)--(a5);
\draw[very thick] (a2)--(a6);
\draw[dashed] (a2)--(a8);
\draw[dashed] (a3)--(a4);
\draw[dashed] (a3)--(a5);
\draw[dashed] (a3)--(a7);
\draw[dashed] (a3)--(a8);
\draw[dashed] (a4)--(a6);
\draw[dashed] (a4)--(a7);
\draw[dashed] (a4)--(a8);
\draw[very thick] (a5)--(a6);
\draw[dashed] (a5)--(a7);
\draw[dashed] (a5)--(a8);
\draw[dashed] (a6)--(a7);
\draw[dashed] (a6)--(a8);
\draw[dashed] (a7)--(a8);

\node at (18,0) {$=0$};
\end{scope}
\end{tikzpicture}
\caption{$\Delta_{K}^{2}=0$ for $n=3$.}
\label{final-figure2}
\end{minipage}
\end{tabular}
\end{figure}

\section*{Acknowledgment}
The author was supported by JSPS KAKENHI Grant Number 21K03262.
He also would like to thank Professor Fuichi Uchida (1938--2021) who 
was the supervisor in his master course. 
He learned a lot from Professor Uchida including complex quadrics. 
He also would like to thank the anonymous referees for their careful reading of the previous manuscript and their comments and suggestions.

%%%%%%%%%%%%%%%%%%%%%%%%%%%%%%%%%%%%%%%%%%%%%%%%%%%%%%%%%%% References
%%%%%%%%%%%%%%%%%%%%%%%%%%%%%%%%%%%%%%%%%%%%%%%%%%%%%%%%%%

\end{document}